\documentclass{article}
\usepackage{tikz}

\usepackage{mathtools}
\usepackage{amsmath}
\usepackage{amsfonts}
\usepackage{amssymb}
\usepackage{amsthm}
\usepackage{mathrsfs}
\usepackage{manfnt}
\usepackage{xcolor}
\usepackage{tcolorbox}
\usepackage{hyperref}
\usepackage{enumitem}
\usepackage{upgreek}
\usepackage{scalerel}
\usepackage{graphicx,stackengine}

\theoremstyle{plain}
\newtheorem{theorem}{Theorem}[section]
\theoremstyle{definition}
\newtheorem{definition}[theorem]{Definition}

\theoremstyle{plain}
\newtheorem{lemma}[theorem]{Lemma}
\newtheorem{proposition}[theorem]{Proposition}
\newtheorem{corollary}[theorem]{Corollary}

\theoremstyle{remark}
\newtheorem{propparagraph}[theorem]{}
\newtheorem{defparagraph}[theorem]{}

\theoremstyle{remark}

\newtheorem{remark}[theorem]{Remark}
\theoremstyle{plain}
\newtheorem{introTheorem}{Theorem}

\newcommand{\weight}{\mathbin{\vrule height 1.6ex depth 0pt width
		0.14ex\vrule height 0.14ex depth 0pt width 1.3ex}}
\DeclareMathOperator{\Tan}{Tan}

\DeclareMathOperator{\G}{{\bf G}}
\DeclareMathOperator{\Var}{{\bf V}}

\DeclareMathOperator{\IVar}{{\bf IV}}

\DeclareMathOperator{\dist}{dist}

\DeclareMathOperator{\Lip}{Lip}
\DeclareMathOperator{\spt}{spt}
\DeclareMathOperator{\D}{D}

\DeclareMathOperator*{\Clos}{Clos}

\DeclareMathOperator*{\bfalpha}{\boldsymbol{\alpha}}

\DeclareMathOperator*{\Density}{\boldsymbol{\rm \Theta}}
\DeclareMathOperator{\vVarOp}{{\rm\bf v}}

\DeclareMathOperator{\VarTan}{VarTan}

\DeclareMathOperator{\bfp}{{\bf p}}

\DeclareMathOperator{\bfq}{{\bf q}}

\DeclareMathOperator{\bfh}{{\bf h}}

\DeclareMathOperator{\GDensity}{\Theta}

\DeclareMathOperator{\grad}{grad}

\author{Yu Tong Liu and Myles Workman}
\title{Parabolic rectifiability of the Brakke flow}
\date{}

\begin{document}

\maketitle

\begin{abstract}
    We prove that the support of the canonical space-time measure for a Brakke flow is a parabolic $(k+2)$-rectifiable set. 
    As a consequence, we obtain that at almost all points along the flow, with respect to this canonical space-time measure, there exists a unique, static, planar tangent flow, and that various notions of density for the flow agree at these points. 

    Moreover, following on from our previous work \cite{LW-Space-Time-GrassmannMeasureBrakkeFlow}, we continue to develop the approach to the Brakke flow as a space-time-Grassmann measure. 
    We prove that the standard notion of convergence for Brakke flows, coming from \cite[7.1]{Ilmanen-EllipticRegularization}, is equivalent to the convergence of these space-time-Grassmann Radon measures.
    This gives an alternate notion of varifold convergence to the one exhibited in \cite[7.1(ii)]{Ilmanen-EllipticRegularization}.
\end{abstract}

\begin{tabular}{ l p{9cm} }
    \textbf{MSC 2020} & 53E10, 28A75, 49Q20 \\
    \textbf{Keywords} & Brakke flow, space-time-Grassmann measure, parabolic rectifiability
\end{tabular}

\bigskip

Let $J \subset\mathbf{R}$ be a non-empty open interval, and $U \subset \mathbf{R}^{n}$ be a non-empty open set, then a $k$-dimensional Brakke flow over $J \times U$, is a family of Radon measures $\{ \mu (t) \, \colon \, t \in J \}$ over $U$, which evolve as a weak, measure theoretic solution to the mean curvature flow. 
Recall (\cite{LW-Space-Time-GrassmannMeasureBrakkeFlow}) that for a Brakke flow, there exists a canonical Radon measure $\| V \|$ on $J \times U$, given by, 
\begin{equation*}
    \textstyle \| V \| (\psi) = \int_{J} \mu (t)_{x} (\psi (t, x)) \, d \mathscr{L}^{1}_{t},
\end{equation*}
for every continuous real valued function $\psi$ on $J \times U$ with compact support.
Our main result concerns the regularity of this measure $\| V \|$.
\begin{introTheorem}\label{introthm: parabolic rectifiability}
    Suppose $\{ \mu (t) \, \colon \, t \in J \}$ is a $k$-dimensional Brakke flow over $J \times U$, and $\| V \|$ is its canonical space-time measure, then $\spt  \| V \|$ is a (vertically) parabolic $(k + 2)$-rectifiable set.
\end{introTheorem}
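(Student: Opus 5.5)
The plan is to reduce the statement to a measure-theoretic criterion: a Radon measure on $\mathbf{R}\times\mathbf{R}^{n}$ with positive, finite parabolic $(k+2)$-density whose parabolic tangent measures are flat is parabolically rectifiable. Concretely, we aim to show three things.

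\textbf{(a)} The parabolic Hausdorff measure $\mathscr{H}^{k+2}_{\rm par}\weight \spt\|V\|$ is locally comparable to $\|V\|$.

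\textbf{(b)} At $\|V\|$-a.e.\ point $(t_0,x_0)$, every parabolic blow-up
\[
\|V\|_{(t_0,x_0),\lambda}(\psi)=\lambda^{-(k+2)}\|V\|\bigl(\psi(\lambda^{-2}(t-t_0),\lambda^{-1}(x-x_0))\bigr)
\]
converges, as $\lambda\downarrow 0$, to $\theta\,\mathscr{L}^{1}\times\mathscr{H}^{k}\weight T$. Here $T$ is a $k$-plane and $\theta$ is the Gaussian density.

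\textbf{(c)} A parabolic analogue of the Marstrand--Mattila theorem, or alternatively a direct Lipschitz-graph covering argument, applies.

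For (a), we use Huisken's monotonicity formula. The Gaussian density $\GDensity(t_0,x_0)$ exists everywhere and is upper semicontinuous. For Brakke flows with integral time slices it is $\ge 1$ at every point of $\spt\|V\|$. Monotonicity then gives
\[
\|V\|\bigl((t_0-r^{2},t_0+r^2)\times B_r(x_0)\bigr)\ge c\,r^{k+2}
\]
for $(t_0,x_0)\in\spt\|V\|$ and small $r$. The upper bound $\le C r^{k+2}$ follows from local mass bounds. A parabolic covering lemma then shows that $\mathscr{H}^{k+2}_{\rm par}\weight\spt\|V\|\le C\|V\|$ locally. Consequently every $\|V\|$-null set in $\spt\|V\|$ is $\mathscr{H}^{k+2}_{\rm par}$-null, so it suffices to prove rectifiability $\|V\|$-almost everywhere.

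For (b), take a sequence $\lambda_i\downarrow 0$. The rescaled flows $\mu_i(s)=\lambda_i^{-k}\,(\eta_{x_0,\lambda_i})_\#\,\mu(t_0+\lambda_i^{2}s)$ subconverge, by Brakke/Ilmanen compactness, to a Brakke flow. This limit is a self-similar tangent flow, and its space-time measure is the limit of the $\|V\|_{(t_0,x_0),\lambda_i}$. To force the limit to be a static plane, we use the space-time $L^2$ bound $\int_J\int|\mathbf{H}|^2\,d\mu(t)\,dt<\infty$ on compact sets. We view $|\mathbf{H}|^2\,d\|V\|$ as a measure absolutely continuous with respect to $\|V\|$. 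Differentiation with respect to $\|V\|$ along parabolic cylinders, which is legitimate since by (a) $\|V\|$ is doubling on its support, gives for $\|V\|$-a.e.\ point
\[
r^{-(k+2)}\int_{Q_r(t_0,x_0)}|\mathbf{H}|^2\,d\|V\|\to f(t_0,x_0)<\infty .
\]
Parabolic scaling multiplies the left-hand side by $r^{2}$, so the rescaled flows have $\int\!\!\int_{Q_1}|\mathbf{H}_i|^2\to 0$. By lower semicontinuity, the tangent flow therefore has $\mathbf{H}=0$ for a.e.\ time and is static, i.e.\ it is a stationary cone. Next, for $\mathscr{L}^1$-a.e.\ $t_0$ the slice $\mu(t_0)$ is integral rectifiable and has an approximate tangent plane at $\mu(t_0)$-a.e.\ $x_0$. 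By Fubini, these good points have full $\|V\|$-measure. At such a point the static cone is the plane $T$ with multiplicity $\theta$. For this we must compare the time-$0$ slice of the limit with the blow-up of $\mu(t_0)$. The comparison uses the Brakke inequality to control $\mu(t)$ for $t$ near $t_0$ (one-sided time continuity in the form $\mu(t)(\phi)-C(t-s)$ nonincreasing). This gives a unique, static, planar tangent flow.

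For (c), flat tangent measures together with positive finite density should give rectifiability: we decompose the set into pieces where the approximation by $\mathbf{R}\times T$ is uniform at scales $<\delta$ and $T$ is in a small Grassmann ball. On each piece, a cone condition in parabolic distance produces a graph over $\mathbf{R}\times T$ that is Lipschitz in $x$ and $\tfrac12$-Hölder (parabolic Lipschitz) in $t$. The \emph{main obstacle} is this last step. Flat tangent measures only give approximation in a weak, measure sense, so upgrading to a geometric cone condition requires the lower density bound from (a) to rule out holes. I would first try to run Mattila's proof verbatim with the parabolic metric and parabolic dilations, since it uses only doubling and dilation structure. If that fails, I would instead use the time-slice rectifiability directly and obtain Hölder control in $t$ from Brakke's clearing-out lemma.
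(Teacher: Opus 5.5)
There is a genuine gap in step (b), where you identify the $t<0$ part of a tangent flow $W$ with the plane $T=\Tan^{k}(\mu(t_0),x_0)$. Your step (a), and the derivation of $\delta W=0$ from the space-time $L^{2}$ bound on $\mathbf{h}$, are fine.

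Write $Z$ for the stationary cone that $W$ equals for $t<0$. Comparing with the blow-up of the single slice $\mu(t_0)$, and using one-sided time continuity, only gives
\[
\|\langle W^{+},0\rangle\| \le \mathbf{\Theta}^{k}(\mu(t_0),x_0)\,\mathscr{H}^{k}\weight T \le \|\langle W^{-},0\rangle\| = \|Z\| .
\]
So $Z$ contains $T$ with at least that multiplicity. Nothing excludes, for example, $Z=\mathbf{v}(T)+\mathbf{v}(T')$ with the $T'$ sheet vanishing at $t=0$. The tangent plane of $\mu(t_0)$ at $x_0$ says nothing about the slices $\mu(t_0+\lambda^{2}s)$, $s<0$, which are what build $Z$, and Fubini over good slice points does not change that.

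The missing idea is space-time control of tangent planes. The paper uses that $(t,x)\mapsto\Tan^{k}(\langle V,t\rangle,x)$ is $\|V\|$ measurable. By the parabolic Besicovitch theorem it is therefore approximately continuous, with respect to $\|V\|$ and parabolic balls, at $\|V\|$ almost every point. At such points $\int\varphi\,\|S-T\|\,dW=0$ for every tangent flow $W$. Together with $\delta W=0$, every slice of $W$ is then a sum of planes $T_x$ parallel to $T$, with non-increasing multiplicities and nested supports. Self-similarity for $t<0$ rules out $x\neq 0$, and nestedness gives $\spt\|W\|\subset\mathbf{R}\times T$.

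Second, $\mathbf{h}=0$ does not make $W$ static: such a Brakke flow may lose mass at $t=0$. Excluding this almost everywhere is exactly the paper's Theorem 2, which is derived from rectifiability, not used to prove it. So at this stage you cannot feed flat tangent measures $\lambda\,\mathscr{H}^{k+2}_{d}\weight\mathbf{R}\times T$ into a Mattila-type theorem.

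Only the support statement is needed. By the mean value inequality (clearing out), if rescaled supports accumulated at a point off $\mathbf{R}\times T$, the rescaled measures would carry mass at least $cR^{k+2}$ nearby, contradicting $\|W\|=0$ there. This gives the cone condition $E\cap\mathbf{B}_{d}(a,1/j)\sim\Clos X(a,1/j,\mathbf{R}\times T_i,s)=\emptyset$ on countably many pieces $E_{i,j}$. The elementary cone-to-Lipschitz-graph lemma then finishes. Note that the lower density bound is what excludes support points away from the plane; it is not about ruling out holes.
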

The notion of parabolic rectifiability has been recently developed in \cite{mattila2021parabolicrectifiabilitytangentplanes}. 
For the reader's convenience we collect relevant definitions and results from \cite{mattila2021parabolicrectifiabilitytangentplanes} in \ref{subsec: parabolic rectifiability}.

As a consequence to \textbf{Theorem \ref{introthm: parabolic rectifiability}} we obtain that at $\| V \|$ almost all points in $J \times U$, there exists a unique, static, planar tangent flow, and, at such points, all various notions of density for Brakke flow are equal.
Moreover, one may then write our measure $\| V \|$, simply as the restriction of the $(k + 2)$-dimensional parabolic Hausdorff measure to the $(k + 2)$-dimensional parabolic density function.

\begin{introTheorem}\label{introthm: existence of static tangent flow and equality of densities}
    Suppose $\{ \mu (s) \, \colon \, s \in J \}$ is a $k$-dimensional Brakke flow over $J \times U$, and $\| V \|$ is its canonical space-time measure. 
    Then for $\|V\|$ almost every $(t,x)\in J\times U$, $\{ \mu (s) \, \colon \, s \in J \}$ has a unique static planar tangent flow at $(t, x)$, given by
        \begin{equation*}
                \textstyle \{ \Density^{k}(\mu (t), x) \, \cdot \, \mathscr{H}^{k} \weight \Tan^k(\mu (t),x) \, \colon \, s \in \mathbf{R} \}.
        \end{equation*} 
        and moreover, 
        \begin{align*}
            \Theta (V, t, x) = \mathbf{\Theta}^{k} (\mu (t), x) =& \, (2 \boldsymbol{\alpha} (k))^{-1} \Theta^{k + 2}_{\rho} (\| V \|, t, x), \\ 
            =& \, \boldsymbol{\beta} (k)^{-1} \Theta^{k + 2}_{d} (\| V \|, t, x).
        \end{align*}
        Furthermore, we have that 
        \begin{equation*}
        \| V \| = \mathscr{H}^{k + 2}_{\rho} \weight \boldsymbol{\nu} (k)^{-1} \Theta_{\rho}^{k + 2} (\| V \|, \, \cdot \, ) = \mathscr{H}^{k + 2}_{d} \weight \boldsymbol{\omega} (k)^{-1} \Theta_{d}^{k + 2} (\| V \|, \, \cdot \, ).
    \end{equation*}
\end{introTheorem}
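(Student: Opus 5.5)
We will deduce Theorem \ref{introthm: existence of static tangent flow and equality of densities} from Theorem \ref{introthm: parabolic rectifiability}, combined with the Mattila parabolic rectifiability theory recalled in \ref{subsec: parabolic rectifiability} and standard Brakke flow facts (Huisken monotonicity and compactness).

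\textbf{Step 1: full-measure slices.} By the coarea-type formula defining $\|V\|$, for $\mathscr{L}^1$ almost every $t$ the slice $\mu(t)$ is $k$-rectifiable with integer-multiplicity density and generalized mean curvature in $L^2$. Hence for $\|V\|$ almost every $(t,x)$:
\begin{itemize}
    \item $\mu(t)$ has a unique approximate tangent plane $T=\Tan^k(\mu(t),x)$;
    \item the density $\Density^k(\mu(t),x)$ exists and is a positive integer.
\end{itemize}
We also restrict to Lebesgue points in $t$ of $\int |H|^2\,d\mu(t)$.

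\textbf{Step 2: tangent flows.} Parabolic rescalings $\mu_\lambda(s)=\lambda^{-k}(\tau_{x,\lambda})_\#\mu(t+\lambda^2 s)$ subconverge, by Brakke compactness, to tangent flows. By Huisken monotonicity every tangent flow is self-similar with Gaussian density equal to $\Theta(V,t,x)$. We would then use Theorem \ref{introthm: parabolic rectifiability}, via the result of \cite{mattila2021parabolicrectifiabilitytangentplanes}: at $\|V\|$ almost every point the parabolic blow-ups of $\|V\|$ converge to $c\,\mathscr{H}^{k+2}_{\rho}\weight(\mathbf{R}\times P)$ for a vertical plane $\mathbf{R}\times P$, and the parabolic density $\Theta^{k+2}_\rho(\|V\|,t,x)$ exists, is positive and is finite.

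Since the space-time measure of a tangent flow is the limit of the rescaled $\|V\|$, every tangent flow $\nu(s)$ satisfies $\int\nu(s)\,ds = c\,\mathscr{L}^1\times\mathscr{H}^k\weight P$. Because the tangent flow is a Brakke flow with $\mu(s)$ equal to its space-time disintegration for almost every $s$, this gives $\nu(s)=c\,\mathscr{H}^k\weight P$ for almost every $s$.

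The main obstacle is upgrading this to \emph{every} $s$, and pinning $c$ and $P$ independently of the subsequence. For the upgrade, note that a Brakke flow equal to a static plane for almost every $s$ is a static plane for all $s$, by left/right continuity of $\mu(s)(\phi)$ and the Brakke inequality, since a plane has $H=0$. For the identification, the time-$0$ slice of the rescaling is $\lambda^{-k}(\tau_{x,\lambda})_\#\mu(t)$, which converges to $\Density^k(\mu(t),x)\,\mathscr{H}^k\weight T$ by Step 1. To pass this limit through the flow we need slice convergence at the fixed time $0$. We would obtain it from the smallness of $\lambda^{-k}\int_{t-\lambda^2}^{t+\lambda^2}\int_{B_\lambda}|H|^2$ at Lebesgue points together with the Brakke inequality, which controls the change in mass over short times. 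This forces $P=T$ and $c=\Density^k(\mu(t),x)$, so the tangent flow is unique.

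\textbf{Step 3: densities and representation.} The Gaussian density of the static plane of multiplicity $\theta$ equals $\theta$, so $\Theta(V,t,x)=\Density^k(\mu(t),x)$. Computing the $\rho$- and $d$-balls of the limit measure $\theta\,\mathscr{L}^1\times\mathscr{H}^k\weight T$ gives
\[
\Theta^{k+2}_\rho = 2\boldsymbol{\alpha}(k)\,\theta, \qquad \Theta^{k+2}_d=\boldsymbol{\beta}(k)\,\theta,
\]
since a $\rho$-ball of radius $r$ meets $\mathbf{R}\times T$ in a set of measure $2r^2\cdot\boldsymbol{\alpha}(k)r^k$, and similarly for $d$-balls. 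Finally, because $\|V\|$ is carried by a parabolically $(k+2)$-rectifiable set, with positive finite density almost everywhere, the parabolic differentiation theorem (Radon–Nikodym with respect to $\mathscr{H}^{k+2}_\rho$, as in \cite{mattila2021parabolicrectifiabilitytangentplanes}) gives $\|V\|=\mathscr{H}^{k+2}_\rho\weight\boldsymbol{\nu}(k)^{-1}\Theta^{k+2}_\rho$. The analogous identity holds for $d$ with constant $\boldsymbol{\omega}(k)$, these constants being the ball-measures of the plane in the respective metrics.
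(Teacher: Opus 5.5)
Your argument is correct, but it pins down the tangent flow by a different route from the paper.

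\textbf{The paper's route.} The paper's proof (Theorem \ref{thm: existence of unique static tangent flow}) first invokes Lemma \ref{flat tangent flow lemma}, which works as follows at $\|V\|$-a.e.\ point:
\begin{itemize}
\item finiteness of $\Theta^{*k+2}_\rho(\|\delta V\|,\cdot)$ makes every tangent flow $W$ stationary;
\item the $(\|V\|,C)$ approximate continuity of $(s,y)\mapsto\Tan^k(\langle V,s\rangle,y)$ forces the Grassmann part of $W$ to be $T$;
\item the flat-flow classification and the self-shrinker equation then give $\langle W^\pm,s\rangle=\theta^\pm(s)\,\mathbf{v}(T)$, with $\theta^\pm=\Theta(V,t,x)$ for $s<0$ and $\theta^+(0)\le\Density^k(\mu(t),x)\le\theta^-(0)$.
\end{itemize}
Parabolic rectifiability is then used only to rule out a drop of $\theta^\pm$ at $s\ge 0$.

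\textbf{Your route.} You let Theorem \ref{introthm: parabolic rectifiability} together with Mattila's results do all the structural work. This yields a static vertical plane $\mathbf{R}\times P$, with $P$ and the constant still unknown. You then read off $P=T$ and the multiplicity from the blow-up of the single slice $\mu(t)$ at time $0$. Finally, $\Theta(V,t,x)$ comes from the monotonicity identity for tangent flows.

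\textbf{Comparison.} Given Theorem \ref{introthm: parabolic rectifiability}, your deduction is shorter. It needs neither stationarity of tangent flows, nor space-time approximate continuity of the tangent plane, nor the flat-flow classification. Globally, however, nothing is saved. The paper's proof of Theorem \ref{introthm: parabolic rectifiability} itself rests on Lemma \ref{flat tangent flow lemma}, and the paper simply reuses that lemma for both results.

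\textbf{One small correction.} In Step 1 you select Lebesgue points in $t$ of $t\mapsto\int|\mathbf{h}|^2\,d\mu(t)$. Since this condition does not localize in space, it only gives $\lambda^{-k}\int_{t-\lambda^2}^{t+\lambda^2}\int_{B_\lambda(x)}|\mathbf{h}|^2=O(\lambda^{2-k})$, which is not $o(1)$ when $k\ge 2$. What you actually need is the space-time condition $\Theta^{*k+2}_\rho(|\mathbf{h}|^2\,\|V\|,t,x)<\infty$. This holds $\|V\|$-a.e.\ by the same argument used for $\|\delta V\|$ in Remark \ref{rmk:almost every where stationary tangent flow}.

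Even that condition is unnecessary. The uniform semi-monotonicity of $s\mapsto\mu_\lambda(s)(\phi)-Ls$ already suffices: this function is nonincreasing, with $L$ depending only on $\phi$ and the uniform local mass bounds of the rescalings. It squeezes every limit point of $\mu_\lambda(0)(\phi)$ between $\langle\|W\|^+,0\rangle(\phi)$ and $\langle\|W\|^-,0\rangle(\phi)$. These coincide because $W$ is static, which is the same sandwich the paper uses at time $0$.
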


Here, $\Theta (V, t ,x)$ denotes the Gaussian density, and the other densities are defined in \ref{para: densities}, with the latter two being with respect to the parabolic metrics $\rho$ and $d$, defined in \ref{para: parabolic metrics}.
The constant $\boldsymbol{\alpha} (k)$ is defined in \ref{para: hausdorff measure}, and the constants $\boldsymbol{\beta} (k)$, $\boldsymbol{\nu} (k)$, and $\boldsymbol{\omega} (k)$ are defined in \ref{para: parabolic metrics}. 

We emphasise that the results in \textbf{Theorem \ref{introthm: existence of static tangent flow and equality of densities}} were previously unknown.
What was previously known, by stratification \cite{White1997}, was that for $\mathscr{H}^{k + 2}_{\rho}$ almost all $(t, x) \in \mathrm{spt} \, \| V \|$, there exists a static, planar tangent flow at $(t, x)$. 
The stratification however does not assert that this static planar tangent flow is unique, and thus does not yield the equality of densities.

Before outlining the strategy of the proof, we first comment on our approach to the Brakke flow. 
As a follow-on from our previous work \cite{LW-Space-Time-GrassmannMeasureBrakkeFlow}, throughout much of this paper we opt to define the Brakke flow as a space-time-Grassmann measure $V$, and work exclusively with $V$ and $\| V \|$, instead of the usual approach of working with a $1$ parameter family of Radon measures over $U$ (or the equivalent $k$-dimensional varifolds).
One motivation is that these measures satisfy better properties than the individual time slice measures $\mu (t)$. 
For example $\| V \|$ satisfies uniform lower $(k + 2)$-dimensional density bounds, see \ref{prop: density bounds}.
Another motivation is that this allows us to apply classical techniques from measure theory in this space-time setting.
For example, in this present paper we make use of the notion of approximate continuity in space-time, as well as the differentiation theory for space-time Radon measures. 
A further benefit of this approach is that these measures simplify the convergence theory of Brakke flows.

\begin{introTheorem}[Convergence of Brakke Flows]\label{introprop: convergence of Brakke flows} 
    Let $V_{1}, \, V_{2}, \, \ldots$, and $V$ be space-time-Grassmann Brakke flows over $J \times \mathbf{G}_{k} (U)$ (see \ref{def: space-time-Grassmann Brakke flow}), then the following three statements are equivalent: 
    \begin{enumerate}[label=\ref{introprop: convergence of Brakke flows}.\arabic{enumi}]
         \item \label{introprop: convergence of Brakke flows:1} $V = \lim\limits_{i\to\infty} V_i$. 
        \item \label{introprop: convergence of Brakke flows:2} $\|V\| = \lim\limits_{i\to\infty} \|V_i\|$.
        \item \label{introprop: convergence of Brakke flows:3} 
        There exists a countable set $D \subset J$ such that for every representative $\mu_i$ of $V_{i}$ (see \ref{proppara: def of representative Brakke Flow}), and every $t\in J \sim D$,
        \[
            \| \langle V, t \rangle \| = \lim\limits_{i\to\infty} \mu_i(t).
        \]
    \end{enumerate}
\end{introTheorem}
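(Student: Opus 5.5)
We prove the cycle \ref{introprop: convergence of Brakke flows:1} $\Rightarrow$ \ref{introprop: convergence of Brakke flows:2} $\Rightarrow$ \ref{introprop: convergence of Brakke flows:3} $\Rightarrow$ \ref{introprop: convergence of Brakke flows:1}, understanding all limits as weak convergence of Radon measures (on $J\times\mathbf{G}_k(U)$, respectively $J\times U$, respectively $U$).

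\textbf{The step \ref{introprop: convergence of Brakke flows:1} $\Rightarrow$ \ref{introprop: convergence of Brakke flows:2}.} This is immediate. $\|V\|$ is the pushforward of $V$ under the projection $J\times\mathbf{G}_k(U)\to J\times U$, which is proper. Testing against $\psi\circ\pi$ for $\psi\in C_c(J\times U)$ therefore gives the claim.

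\textbf{The step \ref{introprop: convergence of Brakke flows:2} $\Rightarrow$ \ref{introprop: convergence of Brakke flows:3}.} We follow Ilmanen's compactness argument \cite[7.1]{Ilmanen-EllipticRegularization}, using the Brakke inequality.
\begin{enumerate}
\item Fix a countable dense family of nonnegative $\phi\in C^2_c(U)$. For each $\phi$, the functions $t\mapsto \mu_i(t)(\phi)-C(\phi,K)t$ are nonincreasing, with $C$ uniform in $i$. The uniformity comes from local mass bounds, which follow from convergence of $\|V_i\|$ together with the standard monotonicity/area bounds for Brakke flows.
\item By Helly's selection theorem, a subsequence converges for every $t$ to some $m_\phi(t)$. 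For all $t$ outside a countable set $D$ (the discontinuities of the monotone limits), the limit measures $\nu(t)=\lim\mu_i(t)$ then exist.
\item Fubini and dominated convergence give $\|V\|(\psi)=\int\nu(t)(\psi(t,\cdot))\,dt$. Hence $\nu(t)=\|\langle V,t\rangle\|$ for a.e.\ $t$.
\item Both $\nu$ and the representative of $V$ are one-sided continuous in the Brakke sense outside countable sets. It follows that $\nu(t)=\|\langle V,t\rangle\|$ for all $t$ outside a countable set $D$.
\item Since the limit is uniquely identified, the full sequence converges, not just a subsequence. The set $D$ is independent of the representatives $\mu_i$, because any two representatives differ only at countably many times, after enlarging $D$ if necessary.
\end{enumerate}

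\textbf{The step \ref{introprop: convergence of Brakke flows:3} $\Rightarrow$ \ref{introprop: convergence of Brakke flows:1}.} This is the main obstacle, since we must upgrade measure convergence to convergence of the Grassmann (tangent-plane) parts.
\begin{enumerate}
\item Integrating \ref{introprop: convergence of Brakke flows:3} in time, with dominated convergence from the uniform local mass bounds, gives \ref{introprop: convergence of Brakke flows:2}.
\item The measures $V_i$ have locally bounded mass, so a subsequence converges to some Radon measure $W$ on $J\times\mathbf{G}_k(U)$ with $\|W\|=\|V\|$. It remains to show $W=V$.
\item Integrate Brakke's inequality in time against test functions $\phi\,\chi(t)$. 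This gives uniform bounds on $\int\!\int|h_i|^2\phi\,d\mu_i\,dt$.
\item By Fatou's lemma, for a.e.\ $t$ the slices $\langle V_i,t\rangle$ along a further ($t$-dependent) subsequence have bounded first variation. Allard's integral compactness theorem then shows they converge as varifolds to an integral varifold whose weight is $\mu(t)=\|\langle V,t\rangle\|$. That varifold is necessarily the one determined by $\mu(t)$, since integral varifolds are determined by their weight measure, whose approximate tangent planes give the Grassmann part.
\item To pass from the slices to $W$, apply Fatou's lemma to $\liminf_i$ of the curvature integrals. The $t$-dependence of the subsequences is handled by testing $W$ against functions $\psi(t,x,S)$ and using disintegration: $W=\int W_t\,dt$, and a.e.\ $W_t$ is identified with the slice of $V$ by comparing with the slice limits above, via a diagonal/Egorov-type argument.
\end{enumerate}

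We expect step 5 of this implication to be the most delicate. Uniqueness of the limit then gives convergence of the whole sequence to $V$.
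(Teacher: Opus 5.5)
There is a genuine gap, and it sits in step 5 of your (3)$\Rightarrow$(1). That step carries all of the content of the implication, and the tools you name do not close it. Fatou only gives $\liminf_{i}\int\phi\,|\mathbf{h}(\langle V_i,t\rangle,x)|^2\,d\mu_i(t)_x<\infty$ for a.e.\ $t$. Hence your step 4 (Ilmanen's 7.1(ii)) gives $\langle V_i,t\rangle\to\langle V,t\rangle$ only along $t$-dependent subsequences. That cannot identify the space-time limit $W$. Weak convergence $V_i\to W$ gives no link between the slices $\langle W,t\rangle$ and pointwise-in-$t$ limits of $\langle V_i,t\rangle$. Nor does convergence along $t$-dependent subsequences control $\int\omega(t)\,\langle V_i,t\rangle(\alpha)\,d\mathscr{L}^1_t$. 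For example, take $g_i=\mathbf{1}_{[0,1/2]}$ for odd $i$ and $g_i=\mathbf{1}_{[1/2,1]}$ for even $i$: then $\liminf_i g_i(t)=0$ for every $t\neq 1/2$, yet $\int_0^1 g_i\,d\mathscr{L}^1=1/2$ for all $i$. Disintegrating $W$ does not supply the missing link, and ``diagonal/Egorov-type'' is not yet an argument.

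What you need is control uniform in $i$. One repair goes as follows.
By Chebyshev, the set $B_i^\Lambda$ of times in a compact $I$ at which the curvature integral exceeds $\Lambda$ has measure at most $C/\Lambda$, uniformly in $i$.
For a.e.\ $t$, Allard's compactness theorem, hypothesis (3), and the fact that an integral varifold is determined by its weight give $\langle V_i,t\rangle(\alpha)\to\langle V,t\rangle(\alpha)$ along \emph{all} $i$ with $t\notin B_i^\Lambda$, not just a subsequence.
Dominated convergence on the good times, the bound $C/\Lambda$ on the bad ones, and $\Lambda\to\infty$ then yield $V_i(\omega\alpha)\to V(\omega\alpha)$.

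The paper's route for (2)$\Rightarrow$(1) is shorter and never looks at slices of $V_i$. The same integrated Brakke inequality bounds the space-time first variation: $\sup_i\|\delta V_i\|(I\times K)<\infty$, where $\delta V_i(g)=\int S\bullet\D_z g(t,z)|_x\,dV_i(t,x,S)$. This bound passes to $W$ by lower semicontinuity, so $\|\delta W\|$ is Radon. Then for a.e.\ $t$ the slice $\langle W,t\rangle$ has locally bounded first variation and the integral weight $\|\langle V,t\rangle\|$. Allard's rectifiability theorem therefore forces $\langle W,t\rangle=\langle V,t\rangle$ (see \ref{proppar:rectifiablity of Brakke flow}).

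Your (1)$\Rightarrow$(2) is fine. Your Helly argument for (2)$\Rightarrow$(3) is a valid alternative to the paper's $L^1_{\mathrm{loc}}$/BV-compactness proof, with one correction in its step 4. You must choose $D$ independently of the subsequence, e.g.\ $D=\{t:\langle\|V\|^-,t\rangle\neq\langle\|V\|^+,t\rangle\}$. This works because monotonicity of $\nu(t)(\phi)-Lt$, together with $\nu=\|\langle V,\cdot\rangle\|$ a.e., squeezes $\langle\|V\|^+,t\rangle(\phi)\le\nu(t)(\phi)\le\langle\|V\|^-,t\rangle(\phi)$. Without this, the subsequence-of-subsequence argument for the full sequence breaks down, since the exceptional sets could vary with the subsequence.
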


The standard notion of convergence of Brakke flows, coming from \cite[7.1]{Ilmanen-EllipticRegularization}, is given by \ref{introprop: convergence of Brakke flows:3}, after potentially taking a further subsequence to obtain convergence along the countable set of times $D$.
Thus a key point of interest in \textbf{Theorem \ref{introprop: convergence of Brakke flows}}, is that \ref{introprop: convergence of Brakke flows:1} gives an alternate notion of varifold convergence to the one exhibited in \cite[7.1(ii)]{Ilmanen-EllipticRegularization}.

We now remark on the proofs of \textbf{Theorem \ref{introthm: parabolic rectifiability}} and \textbf{Theorem \ref{introthm: existence of static tangent flow and equality of densities}}.
First, it may be shown that at point $(t, x) \in \mathrm{spt} \, \| V \|$, at which $T(s, y) = \mathrm{Tan}^{k} ( \langle V, s \rangle, y)$ is approximately continuous with respect to $\| V \|$, any tangent flow at $(t, x)$, must be comprised at each time, of an at most countable collection of planes parallel to $T (t, x)$, each with potentially time dependent multiplicities.
The self-shrinking nature of these tangent flows then implies that we must in fact just have $T (t, x)$, with constant density for negative times, given by the Gaussian density of $V$ at $(t, x)$. 
The issue now to concluding \textbf{Theorem \ref{introthm: existence of static tangent flow and equality of densities}} is that this density could drop for non-negative times. 
However, $\mathrm{spt} \, \| V \|$ being a parabolic rectifiable set (\textbf{Theorem \ref{introthm: parabolic rectifiability}}) rules out this behaviour almost everywhere.
Indeed once one has that $\mathrm{spt} \, \|V\|$ is a (vertical) parabolic $(k + 2)$-rectifiable set, then almost everywhere, tangent measures to $\mathscr{H}^{k + 2}_{\rho} \weight \mathrm{spt} \, \| V \|$ must have the form $c \cdot \mathscr{H}^{k + 2}_{\rho} \weight \mathbf{R} \times T$, for $T \in \mathbf{G} (n, k)$, and $0 < c < +\infty$. 
Thus, as $\| V \|$ is absolutely continuous with respect to $\mathscr{H}^{k + 2}_{\rho} \weight \mathrm{spt} \, \| V \|$, tangent measures to $\| V \|$ almost everywhere must have this same form, implying that for our tangent flows, the time dependent multiplicity function cannot drop for non-negative times.
To prove \textbf{Theorem \ref{introthm: parabolic rectifiability}}, the previous discussion, along with mean value inequality for Brakke flows (or equivalently Brakke's clearing out lemma) implies that, locally about $(t, x)$, $\mathrm{spt} \, \| V \|$ must be contained in a parabolic cone about $\mathbf{R} \times T (t, x)$, with arbitrarily small cone angle.
It then follows, via classical arguments, that $\mathrm{spt} \, \| V \|$ is a (vertical) parabolic $(k + 2)$-rectifiable set.

We now outline the structure of the paper. 
For the reader's convenience in section \ref{subsec: notation} we collect notations, and in section \ref{subsec: parabolic rectifiability} we collect relevant definitions and results from \cite{mattila2021parabolicrectifiabilitytangentplanes}.
In \ref{subsec: Brakke Flow preliminaries} we collect preliminaries on the Brakke flow, and rephrase many classical results (monotonicity formula, mean value inequality etc.) in terms of the space-time-Grassmann measure $V$. 
We prove \textbf{Theorem \ref{introprop: convergence of Brakke flows}} in section \ref{sec: convergence of Brakke Flows}, and \textbf{Theorem \ref{introthm: parabolic rectifiability}} and \textbf{Theorem \ref{introthm: existence of static tangent flow and equality of densities}} in section \ref{sec: parabolic rectifiability}.

\subsection*{Acknowledgments} 

Both authors are grateful to Prof. Ulrich Menne for many valuable discussions and for sharing ideas that contributed to this work.

\section{Preliminaries}

\subsection{Notation}\label{subsec: notation}

Throughout we have $U$ being an open subset of $\mathbf{R}^{n}$, and $J$ is an open interval of $\mathbf{R}$, and $\mathscr{P}$ denoting the set of positive integers.

\begin{propparagraph}[Upper derivative]
    For a real valued function $f$ on $J$, 
    and $t\in J$,
    we define 
\[ \overline{\D}_sf(s)|_t = \limsup\limits_{s\to t} \frac{f(s)-f(t)}{s-t}\]
\end{propparagraph}

\begin{propparagraph}[Radon measures and continuous functions]
    For a locally compact Hausdorff space $X$, $\mathscr{K} (X)$ denotes the space of compactly supported continuous functions on $X$, and furthermore for a compact set $K \subset X$, we denote $\mathscr{K}_{K} (X) $, to be the set of functions $\varphi \in \mathscr{K} (X)$, such that $ \spt \varphi \subset K$. 
    
    The space of Radon measures over $X$ is denoted by$\mathscr{M}_{+} (X)$.
    For another locally compact Hausdorff space $Y$, a proper, continuous map $f \colon X \rightarrow Y$, and $\mu \in \mathscr{M}_{+} (X)$, we define $f_{\sharp} \mu \in \mathscr{M}_{+} (Y)$ by, 
    \begin{equation*}
        \textstyle f_{\sharp} \mu (A) = \mu (f^{-1} (A)), \quad A \subset Y \ \text{Borel}.
    \end{equation*}
\end{propparagraph}

\begin{propparagraph}[Hausdorff measure]\label{para: hausdorff measure}
If $X$ is a metric space
with metric $d$, $\mathbf U_{d}(x,r)$ and $\mathbf B_{d}(x,r)$ respectively denote the open ball and closed balls, centered at $x\in X$, with radius $0<r<\infty$.
For $0 \leq s <  \infty$, we define the $s$ dimensional Hausdorff measure on $X$, with respect to metric $d$ by, $\mathscr{H}_{d}^{s} (A) = \lim_{\delta \downarrow 0} \mathscr{H}_{d, \delta}^{s} (A)$, for $A \subset X$, where
    \begin{eqnarray*}
         \mathscr{H}_{d, \delta}^{s} (A) = \inf \left\{ \sum_{i = 1}^{\infty} \left( \frac{\mathrm{diam}_{d} C_{i}}{2} \right)^{s} \, \colon \, A \subset \bigcup_{i = 1}^{\infty} C_{i} \subset X, \, \mathrm{diam}_{d} C_{i} \leq \delta \right\}.
    \end{eqnarray*}
We will often drop notational dependence on the metric $d$ when there is no ambiguity.
When $X = U \subset \mathbf{R}^{n}$, with the standard Euclidean metric, we opt to define the $s$ dimensional Hausdorff measure, $\mathscr{H}^{s} = \lim_{\delta \downarrow 0} \mathscr{H}^{s}_{\delta}$, with the standard renormalisation, 
\begin{equation*}
    \mathscr{H}_{\delta}^{s} (A) = \inf \left\{ \sum_{i = 1}^{\infty} \boldsymbol{\alpha} (s)\left( \frac{\mathrm{diam} C_{i}}{2} \right)^{s} \, \colon \, A \subset \bigcup_{i = 1}^{\infty} C_{i} \subset X, \, \mathrm{diam} C_{i} \leq \delta \right\},
\end{equation*}
where $\boldsymbol{\alpha} (s) = \pi^{- s / 2} \left( \int_{0}^{\infty} \exp (-r) r^{s / 2} \, d \mathscr{L}^{1}_{r} \right)^{-1}$.
\end{propparagraph}

\begin{propparagraph}[Densities]\label{para: densities}
    For a metric space $X$ with metric $d$, Borel measure $\mu$ on $X$, and $x \in X$, we denote,
    \begin{equation*}
        \Theta_{* d}^{m} (\mu ,x) = \liminf_{r \downarrow 0} \frac{\mu (\mathbf{B}_{d} (x, r))}{r^{m}}, \quad \Theta_{d}^{* m} (\mu ,x) = \limsup_{r \downarrow 0} \frac{\mu (\mathbf{B}_{d} (x, r))}{r^{m}},
    \end{equation*}
    When, $\Theta_{* d}^{m} (\mu ,x) = \Theta_{d}^{* m} (\mu ,x)$, we denote this common value by $\Theta_{d}^{m} (\mu, x)$.
    In the case of $X = U \subset \mathbf{R}^{n}$, with the standard Euclidean metric, we opt to use the upper and lower densities with the standard renormalisation,
    \[{\textstyle  \Density_{*}^{m} }(\mu ,x) = \liminf\limits_{r \downarrow 0} \frac{\mu (\mathbf{B} (x, r))}{\bfalpha(m)r^{m}}, \quad {\textstyle \Density^{* m}} (\mu ,x) = \limsup\limits_{r \downarrow 0} \frac{\mu (\mathbf{B} (x, r))}{\bfalpha(m)r^{m}}.\]
    When these upper and lower densities agree, $\Density^m(\mu,x)$ denotes their common value.
\end{propparagraph}

\begin{propparagraph}[Varifolds]\label{proppara: varifold notation}
As in \cite[2.3]{Allard1972OnTF}, we denote the set of $k$ dimensional linear subspaces of $\mathbf R^n$ by $\G(n,k)$, and we denote $\G_k(U) = U\times \G(n,k)$.
Associating $S \in \mathbf{G} (n, k)$ with the projection matrix onto $S$, the operator norm, which we denote by $\| \, \cdot \, \|$, yields a metric on $\mathbf{G} (n, k)$.
Again, following \cite[3.1, 3.5]{Allard1972OnTF}, we denote the space of varifolds and integral varifolds in $U$, by $\Var_k(U)$ and $\IVar_k(U)$ respectively.
For $\phi \colon U \rightarrow \mathbf{R}$, a non-negative, class 2 function with compact support, we denote $U_{\phi} = U \cap \{x  \colon \phi (x) > 0\}$, and for $V \in \mathbf{G}_{k} (U)$, we denote $V_{\phi}$ to be the restriction of $V$ to subsets of $\mathbf{G}_{k} (U_{\phi})$.
The generalized mean curvature of $V\in \Var_k(U)$ at $x\in U$, is defined in \cite[4.3]{Allard1972OnTF}, and denoted by $\bfh(V,x)$, whenever it exists. 
For a $k$ dimensional $C^{1}$ submanifold $M \subset U$, such that $\mathscr{H}^{k} (M \cap K) < + \infty$ for all compact $K \subset U$, we denote the integral varifold, 
\begin{equation*}
    \mathbf{v} (M) (\psi) = \int_{M} \psi (x, \mathrm{Tan} \, M) \, d \mathscr{H}^{k}_{x}, \quad \psi \in \mathscr{K} (\mathbf{G}_{k} (U)).
\end{equation*}
For $r > 0$, and $a \in \mathbf{R}^{n}$, we denote $\boldsymbol{\mu}_{a, r} (x) = r (x - a)$, and then for $V \in \mathbf{V}_{k} (U)$, we denote 
\begin{equation*}
    V^{a; r} (A) = r^{-k} V (\{ (x, S) \, \colon \, (\boldsymbol{\mu}_{a, r^{-1}} (x), S) \in A\},
\end{equation*}
for $A \subset \mathbf{G}_{k} (\boldsymbol{\mu}_{a, r^{-1}} (U))$ Borel, and note that, 
\begin{equation*}
    \textstyle \| V^{a; r} \| = r^{-k} (\boldsymbol{\mu}_{a, r^{-1}})_{\sharp} \| V \|
\end{equation*}
For $V \in \mathbf{V}_{k} (U)$, and $a \in U$, we define $\mathrm{Tan}^{k} (V, a) \in \mathbf{G} (n, k)$, when it exists, to be such that
    \begin{equation*}
        \textstyle \lim_{r \downarrow 0} V^{a; r} = \Density^{k} (\| V \|, a) \cdot \mathbf{v} (\mathrm{Tan}^{k} (V, x)).
    \end{equation*}
Note that this definition differs from that in \cite[2.8(3)]{Allard1972OnTF}, but agrees with the notion of tangent plane as given in \cite[38.1]{Simon-GMT}.
\end{propparagraph}

\begin{propparagraph}[Time slices of Radon measures over $J \times \mathbf{G}_{k} (U)$]
Let $V$ be a Radon measure on $J \times \mathbf{G}_{k} (U)$, then, similarly to \cite[3.4]{Menne-WDFV}, for $\mathscr{L}^{1}$ almost all $t \in J$, we can define the following Radon measures over $\mathbf{G}_{k} (U)$,
    \begin{equation*}
    \begin{split}
       \textstyle  \langle V, t \rangle (\alpha) &= \lim\limits_{\varepsilon \downarrow 0} \frac{1}{2 \varepsilon} \int \, \mathbf{1}_{\{ r \colon t - \varepsilon \leq r \leq t + \varepsilon \}} (s) \alpha (x, S) \, d V (s, x, S), \\ 
        \textstyle\langle V^{+}, t \rangle (\alpha) &= \lim\limits_{\varepsilon \downarrow 0} \frac{1}{\varepsilon} \int \mathbf{1}_{\{ r \colon t \leq r \leq t + \varepsilon \}} (s)  \, \alpha (x, S) \, d V (s, x, S), \\ 
        \textstyle\langle V^{-}, t \rangle (\alpha) &= \lim\limits_{\varepsilon \downarrow 0} \frac{1}{\varepsilon} \int \mathbf{1}_{\{ r \colon t - \varepsilon \leq r \leq t \}} (s) \, \alpha (x, S) \, d V (s, x, S),
    \end{split}
    \end{equation*}
for $\alpha \in \mathscr{K} (\mathbf{G}_{k} (U))$.
We analogously define, for $\mathscr{L}^{1}$ almost all $t \in J$, the Radon measures, $\langle \|V\|, t \rangle$, and $\langle \| V \|^{\pm}, t \rangle$, over $U$, for the weight measure $\|V\|$ over $J \times U$.
Note that when defined, $\| \langle V, t \rangle \| = \langle \| V \|, t \rangle$, and $\| \langle V^{\pm}, t \rangle \| = \langle \| V \|^{\pm}, t \rangle$.
\end{propparagraph}

\begin{propparagraph}[Parabolic metrics and their Hausdorff measures]\label{para: parabolic metrics}
    For $(t, x), \, (s, y) \in \mathbf{R} \times \mathbf{R}^{n}$ we denote the following parabolic metrics, 
\begin{eqnarray*}
    d ((t, x), (s, y)) &=& \left( |t - s| + |x - y|^{2} \right)^{1 / 2}, \\ 
    \rho ((t, x), (s, y)) &=& \max \left\{ |t - s|^{1 / 2}, |x - y| \right\}.
\end{eqnarray*}
and note that $\rho$ and $d$ are bi-Lipschitz equivalent.
We will also make use of the following notation for the parabolic `norm' associated to the metric $d$, 
\begin{equation*}
    \| (t, x) \| = \left( |t| + |x|^{2} \right)^{1 / 2}.
\end{equation*}
For every $(t_0,x_0) \in \mathbf R \times \mathbf R^n$ and $0<r<\infty$, we denote 
\begin{equation*}
    \mathbf P(t_0,x_0,r) = \{(t,x) : t_0-r^2 \leq t \leq t_0, \ |x-x_0|\leq r \}.
\end{equation*}
For $T \in \mathbf{G} (n, k)$, we denote $T^{\perp} \in \mathbf{G}(n, n - k)$ to be the orthogonal complement of $T$. 
Then letting $\mathbf{p}_{T}$ and $\mathbf{q}_{T}$ denote the orthogonal projections onto $\mathbf{R} \times T$ and $\{0\} \times T^{\perp}$ respectively, we have that
\[ \|\xi\|^2 = \| \bfp_{T} (\xi)\|^2 + \|\bfq_{T}(\xi)\|^2 \quad \text{for $\xi \in \mathbf R \times \mathbf R^n$}.\]
For  a positive integer $k$, by the uniqueness of uniformly distributed measures on separable metric spaces \cite[3.4]{Mattila-RectifiabilityBook}, we have positive finite constants $\boldsymbol{\omega} (k)$, and $\boldsymbol{\nu} (k)$, such that on $\mathbf{R} \times \mathbf{R}^{k}$,
\begin{equation*}
    \mathscr{H}_{d}^{k + 2} = \boldsymbol{\beta} (k)^{-1} \boldsymbol{\omega} (k) \mathscr{L}^{k + 1},
\end{equation*}
where $\boldsymbol{\beta} (k) = \mathscr{L}^{k + 1} (\mathbf{B}_{d} (0, 1))$, and, 
\begin{equation*}
    \mathscr{H}_{\rho}^{k + 2} = (2 \boldsymbol{\alpha} (k))^{-1}\boldsymbol{\nu} (k) \mathscr{L}^{k + 1}.
\end{equation*}
For $d$ and $\rho$ on $\mathbf{R} \times \mathbf{R}^{n}$, there exists a positive, finite constant $\gamma (n)$, such that for all $0 \leq s < + \infty$, 
\begin{equation*}
    \gamma (n)^{-s} \mathscr{H}_{d}^{s} \leq \mathscr{H}_{\rho}^{s} \leq \gamma (n)^{s} \mathscr{H}_{d}^{s}.
\end{equation*}
Thus, throughout the paper, for statements made in terms of either $\mathscr{H}_{d}^{s}$, or $\mathscr{H}_{\rho}^{s}$, an analogous statement in terms of the other measure, $\mathscr{H}_{\rho}^{s}$, or $\mathscr{H}_{d}^{s}$, often holds.
\end{propparagraph}

\subsection{Parabolic Rectifiability}\label{subsec: parabolic rectifiability}

\begin{definition}[\protect{\cite[3.1]{mattila2021parabolicrectifiabilitytangentplanes}}]\label{def: parabolic Lipschitz graph}
    For $0 < L < + \infty$, we say that a set $G \subset \mathbf{R} \times \mathbf{R}^{n}$ is a (vertical) parabolic ($k+2$, $L$)-Lipschitz graph if there exists a $T \in \mathbf{G} (n, k)$, $A \subset \mathbf{R} \times T$, and $g \colon A \rightarrow T^{\perp}$, such that, 
    \begin{equation*}
        |g (t, x) - g (s, y)| \leq L \| (t, x) - (s, y) \|,
    \end{equation*}
    and,
    \begin{equation*}
        G = \left\{ (t, x + g (t, x)) \, \colon \, (t, x) \in A \right\}.
    \end{equation*}
\end{definition}

\begin{definition}[\protect{\cite[1.2, 3.6]{mattila2021parabolicrectifiabilitytangentplanes}}]\label{def: parabolic rectifiability}
    A set $E \subset \mathbf{R} \times \mathbf{R}^{n}$ is a (vertical) parabolic $(k+2)$-rectifiable set, if for every $0 < L < + \infty$, there are ($k + 2$, $L$)-Lipschitz graphs $G_{1}, \, G_{2}, \, \ldots$, such that, 
    \begin{equation*}
        \mathscr{H}_{d}^{k + 2} \left( E \sim \bigcup_{i = 1}^{\infty} G_{i} \right) = 0.
    \end{equation*}
\end{definition}

\begin{definition}[\protect{\cite[4.4]{mattila2021parabolicrectifiabilitytangentplanes}}]
    For a Radon measure $\mu$ on $J \times U$, a non-zero Radon measure $\nu$ on $\mathbf{R} \times \mathbf{R}^{n}$ is said to be a tangent measure of $\mu$, at $a \in \mathbf{R} \times \mathbf{R}^{n}$, if there exists sequences of positive real numbers $c_{1}, \, c_{2}, \, \ldots$, and $r_{1}, \, r_{2}, \, \ldots$, with $r_{i} \rightarrow 0$, such that
    \begin{equation*}
        c_{i} (T_{a, r_{i}})_{\sharp}\mu \rightarrow \nu,
    \end{equation*}
    as Radon measures, where $T_{(t, x), r} (s, y) = (r^{-2} (s - t), r^{-1} (y - x))$.
    We denote the set of such Radon measures by $\mathrm{Tan} (\mu, a)$.
\end{definition}

\begin{lemma}[\protect{\cite[4.6(2)]{mattila2021parabolicrectifiabilitytangentplanes}}]\label{lem: equality of tangent measure for a.c. measures}
    For Radon measures $\nu$ and $\mu$ on $J \times U$, we have that if $\nu$ is absolutely continuous with respect to $\mu$, then $\mathrm{Tan} (\nu, a) = \mathrm{Tan} (\mu, a)$ for $\nu$ almost all $a \in \mathbf{R} \times \mathbf{R}^{n}$.
\end{lemma}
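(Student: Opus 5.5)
The statement to prove is Lemma \ref{lem: equality of tangent measure for a.c. measures}: if $\nu \ll \mu$, then $\mathrm{Tan}(\nu,a)=\mathrm{Tan}(\mu,a)$ for $\nu$ almost all $a$. I would adapt the Euclidean argument (Preiss, Mattila's book, Lemma 14.6) to the parabolic dilations $T_{a,r}$ and the metric balls $\mathbf{B}_{d}(a,r)$.

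The first step is to fix the measure-theoretic input. Write $\nu = f\mu$ with $f \in L^1_{\mathrm{loc}}(\mu)$ non-negative; this Radon--Nikodym form is available since $\nu \ll \mu$. The key ingredient is the Lebesgue differentiation theorem for $\mu$ with respect to the parabolic balls $\mathbf{B}_{d}(a,r)$. It holds for arbitrary Radon measures on $(\mathbf{R}\times\mathbf{R}^n, d)$, since this is a doubling metric space on which the Besicovitch covering theorem is available for $d$-balls (alternatively one uses $\rho$-balls, which are products of intervals and Euclidean balls). It gives, for $\mu$ almost every $a$,
\begin{equation*}
\lim_{r\downarrow 0}\frac{1}{\mu(\mathbf{B}_{d}(a,r))}\int_{\mathbf{B}_{d}(a,r)}|f-f(a)|\,d\mu=0 .
\end{equation*}
Let $A$ be the set of such Lebesgue points with $0<f(a)<\infty$ and $a\in\spt\mu$. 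Then $\nu$ almost every point lies in $A$, because $\nu(\{f=0\})=0$.

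The second step fixes $a\in A$ and compares the rescalings. Let $\lambda\in\mathrm{Tan}(\mu,a)$ with $c_i(T_{a,r_i})_\sharp\mu\to\lambda$. For $\varphi\in\mathscr{K}$ with $\spt\varphi\subset \mathbf{B}_{d}(0,R)$,
\begin{equation*}
\Big|c_i\!\int\varphi\circ T_{a,r_i}\,d\nu - f(a)c_i\!\int\varphi\circ T_{a,r_i}\,d\mu\Big|\le \sup|\varphi|\,c_i\!\int_{\mathbf{B}_{d}(a,Rr_i)}|f-f(a)|\,d\mu .
\end{equation*}
Here I use that $T_{a,r}^{-1}(\mathbf{B}_{d}(0,R))=\mathbf{B}_{d}(a,Rr)$ by the homogeneity $\|(r^2t,rx)\|=r\|(t,x)\|$. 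The right side equals $c_i\mu(\mathbf{B}_{d}(a,Rr_i))\cdot o(1)$. Moreover $c_i\mu(\mathbf{B}_{d}(a,Rr_i))$ stays bounded, since by weak convergence it is at most $\lambda(\mathbf{B}_{d}(0,R+1))+1$ for large $i$. Hence $c_i(T_{a,r_i})_\sharp\nu\to f(a)\lambda$, so $f(a)\lambda\in\mathrm{Tan}(\nu,a)$. Tangent sets are cones under positive scalars, so this gives $\mathrm{Tan}(\mu,a)\subset\mathrm{Tan}(\nu,a)$.

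The third step is the reverse inclusion, which follows symmetrically. Given $c_i(T_{a,r_i})_\sharp\nu\to\lambda$, the same estimate bounds the difference from $f(a)c_i(T_{a,r_i})_\sharp\mu$ by $c_i\mu(\mathbf{B}_{d}(a,Rr_i))\,o(1)$. The bound on $c_i\mu(\mathbf{B}_{d}(a,Rr_i))$ now comes from the Lebesgue point property: $\nu(\mathbf{B}_{d}(a,s))/\mu(\mathbf{B}_{d}(a,s))\to f(a)>0$, so $c_i\mu(\mathbf{B}_{d}(a,Rr_i))\le 2f(a)^{-1}c_i\nu(\mathbf{B}_{d}(a,Rr_i))$ for large $i$, and the latter is bounded. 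Thus $f(a)^{-1}\lambda\in\mathrm{Tan}(\mu,a)$.

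The only real obstacle is justifying the differentiation theorem for general Radon measures with respect to parabolic balls. I would handle this with Besicovitch's covering theorem for $\rho$-balls, or through Federer's directionally limited or doubling criteria (Federer 2.8.9, 2.9). Non-zeroness of the limits and the requirement $a\in\spt\mu$ are automatic from $f(a)>0$.
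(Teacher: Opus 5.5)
Your proposal is correct. It is the standard Lebesgue-point argument: take the Radon--Nikodym density $f$, then compare $c_i(T_{a,r_i})_\sharp\nu$ with $f(a)\,c_i(T_{a,r_i})_\sharp\mu$ on dilation-homogeneous balls, which is presumably how the cited \cite[4.6(2)]{mattila2021parabolicrectifiabilitytangentplanes} handles the parabolic case; the paper itself gives no proof beyond that citation. The one step to tidy is differentiation: a doubling metric does not give Lebesgue points for arbitrary Radon measures, and you have not justified Besicovitch for $d$-balls (the paper records only the $\rho$-version, via Itoh). So run the whole argument with $\rho$-balls, for which $T_{a,r}^{-1}(\mathbf{B}_{\rho}(0,R))=\mathbf{B}_{\rho}(a,rR)$ holds just as well and Lebesgue points follow from the Vitali relation of \ref{proppara: parabolic Besicovitch} together with \cite[2.9.9]{MR41:1976}.
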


\begin{theorem}[\protect{\cite[4.12]{mattila2021parabolicrectifiabilitytangentplanes}}]\label{thm: rectifiability thm from Mattila}
    Let $E \subset \mathbf{R} \times \mathbf{R}^{n}$,  be $\mathscr{H}^{k + 2}_{d}$ measurable, and $\mathscr{H}_{d}^{k + 2} (E) < + \infty$. 
    Then the following are equivalent: 
    \begin{enumerate}
        \item $E$ is a (vertical) parabolic ($k+2$) rectifiable set.
        \item For $\mathscr{H}^{k+2}_{d}$ almost all $a \in E$, there exists a unique $T \in \mathbf{G} (n, k)$ such that, 
        \begin{equation*}
            \mathrm{Tan} (\mathscr{H}^{k + 2}_{d} \weight E, a) = \left\{ \lambda \mathscr{H}_{d}^{k + 2} \weight \mathbf{R} \times T \, \colon 0 < \lambda < + \infty \right\}. 
        \end{equation*}
    \end{enumerate}
\end{theorem}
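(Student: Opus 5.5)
We would follow the Euclidean template (Federer, Mattila's book): Lipschitz graphs with small constant give cone conditions, cone conditions force flat tangent measures, and conversely cone conditions let one cover $E$ by graphs. Throughout, write $\mu = \mathscr{H}^{k+2}_{d} \weight E$ and $X(a, T, L) = \{ \xi \colon \|\mathbf{q}_{T}(\xi - a)\| \leq L \|\mathbf{p}_{T}(\xi - a)\| \}$ for the parabolic cone.

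\emph{Density preliminaries.} We first record the standard density facts, which follow from the covering theorems valid in any metric space. Since $\mathscr{H}^{k+2}_{d}(E) < +\infty$, we have $2^{-(k+2)} \leq \Theta^{* k+2}_{d}(\mu, a) \leq 1$ for $\mu$ almost all $a$, up to the normalisation of \ref{para: hausdorff measure}. Moreover, if $F \subset E$ is measurable, then $\Theta^{* k+2}_{d}(\mathscr{H}^{k+2}_{d} \weight (E \sim F), a) = 0$ for almost all $a \in F$. By the second fact, at almost every point of $E \cap G$, for $G$ a graph, $\mathrm{Tan}(\mu, a) = \mathrm{Tan}(\mathscr{H}^{k+2}_{d} \weight (E \cap G), a)$.

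\emph{(1) $\Rightarrow$ (2).} Fix $L$ small and a $(k+2, L)$-Lipschitz graph $G$ over $A \subset \mathbf{R} \times T$. The map $\mathbf{p}_{T}|_{G}$ is bi-Lipschitz with constants $1$ and $(1 + L^{2})^{1/2}$, since $\|\xi\|^{2} = \|\mathbf{p}_{T}\xi\|^{2} + \|\mathbf{q}_{T}\xi\|^{2}$. Hence $\mathscr{H}^{k+2}_{d} \weight G$ is comparable, with constants $1 + O(L)$, to the push-forward of $\mathscr{H}^{k+2}_{d} \weight A$, and the latter is a constant multiple of $\mathscr{L}^{k+1} \weight A$ by \ref{para: parabolic metrics}. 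We then work at a point $a$ that satisfies three conditions:
\begin{enumerate}
    \item $\mathbf{p}_{T}(a)$ is a Lebesgue density point of $A$ for the balls $\mathbf{B}_{d}$;
    \item $\mathscr{H}^{k+2}_{d} \weight (E \sim G)$ has zero density at $a$;
    \item the same holds for a countable family of graphs with $L = 1/j$, $j \in \mathscr{P}$.
\end{enumerate}
At such an $a$, every $\nu \in \mathrm{Tan}(\mu, a)$ is supported in $X(0, T_{j}, 1/j)$ for every $j$. It also satisfies $\nu(\mathbf{B}_{d}(\xi, r)) = (1 + O(1/j)) \, \lambda \, \mathscr{L}^{k+1}(\mathbf{p}_{T_j}\mathbf{B}_{d}(\xi, r) \cap \mathbf{R} \times T_{j})$.

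Passing to a convergent subsequence $T_{j} \to T$, we get $\spt \nu \subset \mathbf{R} \times T$ and $\nu(\mathbf{B}_{d}(\xi, r)) = \lambda \boldsymbol{\beta}(k) r^{k+2}$ for all $\xi \in \spt \nu$. The density point property gives $\spt \nu = \mathbf{R} \times T$. By the uniqueness of uniformly distributed measures \cite[3.4]{Mattila-RectifiabilityBook}, $\nu = \lambda' \mathscr{H}^{k+2}_{d} \weight \mathbf{R} \times T$. The plane $T$ is unique because two distinct such planes cannot both contain $\spt \nu$. All values $\lambda' \in (0, \infty)$ occur by rescaling the constants $c_{i}$.

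\emph{(2) $\Rightarrow$ (1).} From (2) and compactness of tangent measures, we get the following for almost every $a$, with $T = T(a)$, every $L > 0$, and every $\eta > 0$: there is $\delta > 0$ such that
\[ \mu\big(\mathbf{B}_{d}(a, r) \sim X(a, T, L)\big) \leq \eta \, \mu(\mathbf{B}_{d}(a, r)) \quad \text{for } 0 < r < \delta . \]
We would upgrade this measure-smallness to an actual cone inclusion on a large subset. By Egorov-type arguments, we choose compact $F \subset E$ with $\mu(E \sim F)$ small, $T$ varying within a $\tfrac{L}{4}$-net element, uniform $\delta$, and uniform two-sided bounds $c \, r^{k+2} \leq \mu(\mathbf{B}_{d}(b, r))$ for $b \in F$, $r < \delta$.

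\emph{Main obstacle.} The hardest step is the lower bound $c \, r^{k+2} \leq \mu(\mathbf{B}_{d}(b, r))$, since only upper density bounds come for free. We would first try to obtain it from the fact that every tangent measure is a non-zero multiple of flat measure. Applying this with normalisation $c_{i} = \mu(\mathbf{B}_{d}(a, r_{i}))^{-1}$ shows $\mu(\mathbf{B}_{d}(a, 2r))/\mu(\mathbf{B}_{d}(a, r)) \to 2^{k+2}$. Combined with $\Theta^{*} > 0$, this doubling control should propagate down to all small scales and bound the lower density below at almost every point. If that fails, we would restrict to points of positive lower density and show the rest is $\mathscr{H}^{k+2}_{d}$-null.

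\emph{Covering by graphs.} Granting the lower bound, suppose $b \in F \cap \mathbf{B}_{d}(a, r) \sim X(a, T, 2L)$. Then $\mathbf{B}_{d}(b, \kappa r) \cap X(a, T, L) = \emptyset$ for some $\kappa = \kappa(L) > 0$. This ball carries $\mu$-mass at least $c \kappa^{k+2} r^{k+2}$, which contradicts the smallness estimate once $\eta \ll c \kappa^{k+2}$. Hence $F \cap \mathbf{B}_{d}(a, \delta) \subset X(a, T, 2L)$ for each $a \in F$. Splitting $F$ into pieces of $d$-diameter $< \delta$, each piece satisfies a uniform cone condition with a common plane. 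On such a piece, $\mathbf{p}_{T}$ is injective and its inverse, written as $\xi \mapsto \xi + g(\xi)$, satisfies $|g(\xi) - g(\zeta)| \leq 3L \|\xi - \zeta\|$. Thus each piece is a $(k+2, 3L)$-Lipschitz graph. Exhausting $E$ up to a null set by countably many such $F$, for each $L$, gives (1).
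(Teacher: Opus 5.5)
The paper does not prove this statement. It quotes it from \cite[4.12]{mattila2021parabolicrectifiabilitytangentplanes} and uses only the direction (1)$\Rightarrow$(2), in \ref{thm: existence of unique static tangent flow}. Your sketch of that direction is sound in outline. One fix: the transfer of tangent measures from $\mu$ to $E\cap G$ should rest on the positive lower density of the graph part at density points of $\mathbf{p}_T(E\cap G)$, not on the zero density of $E\sim G$ alone. That lower density also guarantees $\mathrm{Tan}(\mu,a)\neq\emptyset$, which you need for ``all $\lambda$ occur''.

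The direction (2)$\Rightarrow$(1) has a genuine gap, exactly where you flag it. First, your smallness estimate normalised by $\mu(\mathbf{B}_d(a,r))$ is not justified, and neither is your derivation of $\mu(\mathbf{B}_d(a,2r))/\mu(\mathbf{B}_d(a,r))\to 2^{k+2}$. Both need precompactness of $\mu(\mathbf{B}_d(a,r_i))^{-1}(T_{a,r_i})_\sharp\mu$, i.e.\ a doubling bound, and that bound is unknown precisely where the lower density may vanish. Normalising by $r_i^{-(k+2)}$ instead is legitimate, since $\Theta^{*k+2}_d(\mu,a)<\infty$ and every subsequential limit is then $0$ or flat. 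It gives $r^{-(k+2)}\mu(\mathbf{B}_d(a,r)\sim X(a,T,L))\to 0$. This is enough for your covering step, but only together with a uniform bound $\mu(\mathbf{B}_d(b,\varrho))\geq c\varrho^{k+2}$ on $F$.

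That lower bound is the real problem. The mechanism you propose is pointwise, and it fails pointwise. Take a profile $\mu(\mathbf{B}_d(a,r))=r^{k+2}\exp(-h(\log\log(1/r)))$ with $h\geq 0$ Lipschitz, unbounded, and vanishing at arbitrarily large arguments. It has doubling ratio tending to $2^{k+2}$ and upper density $1$, but lower density $0$. Flatness of all blow-ups at $a$ is compatible with this: a subset of $\mathbf{R}\times T$ can be evenly spread at each scale around $a$ while its density drifts slowly to $0$. So your fallback, showing that the zero-lower-density points form a null set, is not a technicality. It is the heart of this direction, and it needs a global argument.

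Your covering route is exactly how the paper proves \ref{thm:rectifiability of Brakke flow}, via \ref{lem:convergence of blow up spt} and \ref{Paraoblic Lipshitz graph condition}. There the lower bound comes for free from the mean value inequality (\ref{prop: lower density bound}); a general $E$ has no such bound.

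The missing idea is the one behind the Euclidean cone criterion \cite[Chapter 15]{Mattila-RectifiabilityBook}, which never uses lower densities. Fix $L$ and split $E=R\cup P$, where $R$ is covered by countably many $(k+2,L)$-graphs and $P$ meets every such graph in an $\mathscr{H}^{k+2}_d$ null set. The key lemma is that for each $T\in\mathbf{G}(n,k)$ there is an aperture $L'$, depending only on $L$, such that for $\mathscr{H}^{k+2}_d$ almost every $a\in P$,
\[ \limsup_{r\downarrow 0} r^{-(k+2)}\,\mathscr{H}^{k+2}_d\big((P\cap\mathbf{B}_d(a,r))\sim X(a,T,L')\big)>0 . \]
Its proof has three ingredients:
\begin{itemize}
\item \ref{Paraoblic Lipshitz graph condition}, which by the choice of $P$ gives, for almost every $a\in P$, points of $P$ outside $X(a,T,L)$ arbitrarily close to $a$;
\item the free bound $\Theta^{*k+2}_d\geq 2^{-(k+2)}$ at those points;
\item a covering argument.
\end{itemize}
Since $P\subset E$, your $r^{-(k+2)}$-normalised smallness, which holds for every aperture, holds for $P$ as well, after sorting $P$ according to a net $T_i$ approximating $T(a)$. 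This contradicts the lemma unless $\mathscr{H}^{k+2}_d(P)=0$. Since $L$ was arbitrary, this gives (1).
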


\begin{theorem}[\protect{\cite[4.9]{mattila2021parabolicrectifiabilitytangentplanes}}]\label{thm: mattila existence of density a.e. for parabolic rectifiable sets}
    If $E \subset J \times U$ is $\mathscr{H}^{k + 2}_{d}$ (equivalently $\mathscr{H}_{\rho}^{k + 2}$) measurable and a (vertical) parabolic $(k+2)$ rectifiable set, then for $\mathscr{H}^{k + 2}_{d}$ (equivalently $\mathscr{H}_{\rho}^{k + 2}$) almost all $(t, x) \in E$, 
    \begin{equation*}
        \Theta^{k + 2}_{d} (\mathscr{H}^{k + 2}_{d} \weight E, t, x) = \boldsymbol{\omega} (k), \, \text{ and } \, \Theta^{k + 2}_{\rho} (\mathscr{H}^{k + 2}_{\rho} \weight E, t, x) = \boldsymbol{\nu} (k).
    \end{equation*}
\end{theorem}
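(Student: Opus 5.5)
The plan is to reduce to a single Lipschitz graph with small constant, compute the density there via projection onto $\mathbf{R}\times T$, and then let the Lipschitz constant tend to zero. I work with $d$ first and treat $\rho$ at the end. I will assume $\mathscr{H}^{k+2}_{d}\weight E$ is locally finite (as it is for the sets we apply this to, e.g. subsets of $\spt\|V\|$); otherwise I would first reduce to pieces of finite measure.

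Fix $L_j = 1/j$ and, by Definition \ref{def: parabolic rectifiability}, choose $(k+2,L_j)$-Lipschitz graphs $G_{j,1}, G_{j,2},\ldots$ covering $\mathscr{H}^{k+2}_d$ almost all of $E$. It suffices to show, for each fixed $j$ and $i$, that at $\mathscr{H}^{k+2}_d$ almost every $a\in E\cap G_{j,i}$,
\begin{equation*}
(1+L_j^2)^{-(k+2)/2}\,\boldsymbol{\omega}(k)\le \Theta^{k+2}_{*d}(\mathscr{H}^{k+2}_d\weight E,a)\le \Theta^{*k+2}_{d}(\mathscr{H}^{k+2}_d\weight E,a)\le (1+L_j^2)^{(k+2)/2}\,\boldsymbol{\omega}(k).
\end{equation*}
Intersecting over all $j$ then gives the claim. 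Two reductions are needed to get there.

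\emph{Passing from $E$ to the graph.} By the standard upper-density theorem for Hausdorff measures in metric spaces, for $\mathscr{H}^{k+2}_d$ almost all $a\in G_{j,i}$,
\begin{equation*}
\Theta^{*k+2}_d(\mathscr{H}^{k+2}_d\weight (E\sim G_{j,i}),a)=0 .
\end{equation*}
Since $E\cap G_{j,i}$ and $G_{j,i}\cap E$ agree, this replaces $E$ by $F=E\cap G_{j,i}$, which is itself a graph over some $A'\subset A$.

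\emph{Geometry of the graph.} Write $G=\{\xi+g(\xi):\xi\in A'\}$ with $T$ the plane and $g$ $L$-Lipschitz. The orthogonal splitting $\|\xi\|^2=\|\mathbf{p}_T\xi\|^2+\|\mathbf{q}_T\xi\|^2$ and $|\mathbf{q}_T(\xi-\eta)|\le L\|\mathbf{p}_T(\xi-\eta)\|$ show that $\mathbf{p}_T|G$ is a bijection onto $A'$ with
\begin{equation*}
\|\mathbf{p}_T\xi-\mathbf{p}_T\eta\|\le\|\xi-\eta\|\le(1+L^2)^{1/2}\|\mathbf{p}_T\xi-\mathbf{p}_T\eta\| .
\end{equation*}
Hence $\mathscr{H}^{k+2}_d\weight G$ and $(\mathbf{p}_T)_\sharp^{-1}$ of $\mathscr{H}^{k+2}_d\weight A'$ are comparable with factor $(1+L^2)^{\pm(k+2)/2}$. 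Moreover, for $a\in G$,
\begin{equation*}
\mathbf{p}_T\bigl(\mathbf{B}_d(a,r)\cap G\bigr)\subset \mathbf{B}_d(\mathbf{p}_Ta,r)\cap A',
\qquad
\mathbf{p}_T\bigl(\mathbf{B}_d(a,r)\cap G\bigr)\supset \mathbf{B}_d\bigl(\mathbf{p}_Ta,(1+L^2)^{-1/2}r\bigr)\cap A' .
\end{equation*}
On $\mathbf{R}\times T\cong\mathbf{R}\times\mathbf{R}^k$ we have $\mathscr{H}^{k+2}_d=\boldsymbol{\beta}(k)^{-1}\boldsymbol{\omega}(k)\mathscr{L}^{k+1}$ and $\mathscr{L}^{k+1}(\mathbf{B}_d(0,r))=\boldsymbol{\beta}(k)r^{k+2}$, so $\mathscr{H}^{k+2}_d(\mathbf{B}_d(\xi,r))=\boldsymbol{\omega}(k)r^{k+2}$. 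Lebesgue's differentiation theorem for the doubling measure $\mathscr{L}^{k+1}$ with parabolic balls gives density one of $A'$ at $\mathscr{L}^{k+1}$ almost all of its points. These points correspond, via the bi-Lipschitz map, to $\mathscr{H}^{k+2}_d$ almost all of $G$. Combining the ball inclusions with the measure comparison yields the two-sided bound displayed above.

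For $\rho$ the argument is identical. The needed comparison is $\rho(\mathbf{p}_T\xi,\mathbf{p}_T\eta)\le\rho(\xi,\eta)\le(1+\sqrt2 L)\rho(\mathbf{p}_T\xi,\mathbf{p}_T\eta)$, using $|g(\xi)-g(\eta)|\le L\|\xi-\eta\|\le\sqrt2 L\rho$. On the plane one uses $\mathscr{H}^{k+2}_\rho(\mathbf{B}_\rho(0,r))=\boldsymbol{\nu}(k)r^{k+2}$, which follows from $\mathscr{L}^{k+1}(\mathbf{B}_\rho(0,r))=2\boldsymbol{\alpha}(k)r^{k+2}$. Equivalence of the null sets for $\mathscr{H}_d$ and $\mathscr{H}_\rho$ comes from the $\gamma(n)$ comparison.

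The main obstacle is the first reduction: discarding $E\sim G_{j,i}$ via the zero upper-density theorem. This requires local finiteness of $\mathscr{H}^{k+2}_d\weight E$, and one must check that the exceptional sets are null for both metrics simultaneously. The remaining steps are the routine bi-Lipschitz and uniform-distribution estimates.
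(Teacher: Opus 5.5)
Your argument is correct under your local-finiteness assumption. The paper gives no proof of its own here; it imports the result from Mattila, so there is nothing different to compare against. Your route is the standard one:
\begin{itemize}
\item cover by graphs with Lipschitz constants $L_j\to 0$, which is what replaces Rademacher's theorem in the parabolic setting;
\item discard $E\sim G$ using the zero-upper-density theorem;
\item compare with $\mathbf{R}\times T$ through the bi-Lipschitz projection $\mathbf{p}_T$, with constants $(1+L^2)^{1/2}$ for $d$ and $1+\sqrt{2}L$ for $\rho$;
\item apply Lebesgue's density theorem for $\mathscr{L}^{k+1}$ on parabolic balls.
\end{itemize}
Your ball inclusions, measure comparisons and normalisations $\boldsymbol{\omega}(k)r^{k+2}$ and $\boldsymbol{\nu}(k)r^{k+2}$ all check out.

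\textbf{Local finiteness is necessary.} It is not just a convenience, so your fallback ``otherwise reduce to pieces of finite measure'' cannot work. The statement as transcribed in the paper is missing a hypothesis; compare the finiteness assumption in the companion characterization theorem quoted just before it.

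Here is a counterexample. Fix $0\neq v\in T^{\perp}$ and let $E=(J\times U)\cap\bigcup_{q\in\mathbf{Q}}\mathbf{R}\times(T+qv)$.
\begin{itemize}
\item $E$ is a Borel, countable union of $(k+2,L)$-Lipschitz graphs (with $g$ constant) for every $L$, so it is parabolic $(k+2)$-rectifiable.
\item Take $a\in E$ and $r$ small. For the infinitely many $q$ with $|q-q_0|\,|v|\le r/2$ (where $q_0 v$ is the $T^\perp$-component of $a$), $\mathbf{B}_d(a,r)$ meets the disjoint plane $\mathbf{R}\times(T+qv)$ in a parabolic ball of radius at least $\tfrac{\sqrt{3}}{2}r$.
\item Hence $\mathscr{H}^{k+2}_d(E\cap\mathbf{B}_d(a,r))=+\infty$, and the density is infinite at every point of $E$; the same holds for $\rho$.
\end{itemize}
Since the density at a point sees all of $E$ nearby, no decomposition into finite-measure pieces can repair this. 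In the paper's only application, $E=\spt\|V\|$, local finiteness does hold by the density bounds, so nothing downstream is affected.

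\textbf{Measurability of the graphs.} The zero-upper-density theorem you invoke requires $E\sim G_{j,i}$ to be $\mathscr{H}^{k+2}_d$ measurable. Without measurability, $\mathscr{H}^{k+2}_d\weight(E\sim G_{j,i})$ can have positive density on a non-null part of $G_{j,i}$. The definition, however, allows graphs over arbitrary sets $A$.

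The fix is immediate: replace each $G_{j,i}$ by its closure. This is the graph over $\overline{A}$ of the unique continuous extension of $g$, which has the same constant $L_j$. It still covers $E$ up to a null set and is closed. With that, $E\cap G_{j,i}$ and $A'=\mathbf{p}_T(E\cap G_{j,i})$ are measurable, and the remaining steps go through as you wrote. These are the Lebesgue density on $\mathbf{R}\times T$ and the transfer of null sets through the bi-Lipschitz map $\mathbf{p}_T|G_{j,i}$, for both $d$ and $\rho$.
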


\begin{propparagraph}\label{proppara: parabolic Besicovitch}
    A Besicovitch covering theorem has been proven for the parabolic metric $\rho$ on $\mathbf{R} \times \mathbf{R}^{n}$ (\cite[1.1]{Itoh-ParabolicBesicovitch}), and hence for any open set $\Omega \subset \mathbf{R} \times \mathbf{R}^{n}$, and $\mu \in \mathscr{M}_{+} (\Omega)$, the covering relation, 
    \begin{equation*}
        C = \{ ((t, x), \mathbf{B}_{\rho} ((t, x), r)) \, \colon \, 0 < r < \infty, \, \mathbf{B}_{\rho} ((t, x), r) \subset \Omega \}, 
    \end{equation*}
    is a $\mu$ Vitali relation (see \cite[2.9.16]{MR41:1976}).
    Indeed, let $K_{1}, \ K_{2}, \ldots$ be a compact exhaustion of $\Omega$, and consider any subset $A \subset \Omega$, and subcollection $D \subset C$, which is fine at each $a \in A$, that is
    \begin{equation*}
        \textstyle \inf \{ r \, \colon \, (a, \mathbf{B}_{\rho} (a, r)) \in D, \, a \in A \} = 0.
    \end{equation*}
    Now consider the following collection of balls, 
    \begin{equation*}
    \begin{split}
        \textstyle \mathscr{F} = \{ \mathbf{B}_{\rho} (a, r) \, \colon \, (a, \mathbf{B}_{\rho} (a, r)) & \in D, \ a \in A \cap K_{i} \cap \Omega, \\
        & \ \mathbf{B}_{\rho} (a, r) \subset \Omega, \ 0 < r \leq 1\}.
    \end{split}
    \end{equation*}
    Then, by \cite[1.1]{Itoh-ParabolicBesicovitch}, there exists a finite, positive integer $N = N (n)$, and a disjoint, countable subcollection $\mathscr{G} = \{ \mathbf{B}_{\rho} (a_{i}, r_{i}) \, \colon \, i \in \mathscr{P} \} \subset \mathscr{F}$, such that
    \begin{equation*}
        \textstyle \mu \left( A \cap K_{i} \cap \Lambda \sim \bigcup_{i = 1}^{\infty} \mathbf{B}_{\rho} (a_{i}, r_{i}) \right) \leq \frac{N}{N - 1} \mu (A \cap K_{i} \cap \Omega).
    \end{equation*}
    Thus by \cite[2.8.2]{MR41:1976}, the collection of balls, 
    \begin{equation*}
        \textstyle D (A) = \{ \mathbf{B}_{\rho} (a, r) \, \colon \, (a, \mathbf{B}_{\rho} (a, r)) \in D, \ a \in A \},
    \end{equation*}
    is $\mu$ adequate for $A$ (see \cite[2.8.1]{MR41:1976}), and thus $C$ is a $\mu$ Vitali relation for $\Omega$.
    
    The fact $C$ is a $\mu$ Vitali relation now allows us to directly apply the relevant theory in \cite[2.9]{MR41:1976} to obtain the following:
    \begin{enumerate}[label=\ref{proppara: parabolic Besicovitch}.\arabic{enumi}]
    \item\label{propara: differentiation of Radon measures} For Radon measures $\mu$ and $\nu$ on $\Omega$, with $\nu$ absolutely continuous with respect to $\mu$, we have (see \cite[2.9.1--7]{MR41:1976}) for all Borel sets $B \subset \Omega$ 
    \begin{equation*}
        \nu (B) = \int_{B} \mathbf{D} (\nu, \mu, C, a) \, d \mu_{a},  
    \end{equation*}
    where $\mathbf{D} (\nu, \mu, C, a) = \lim_{r \downarrow 0} \nu (\mathbf{B}_{\rho} (a, r)) / \mu (\mathbf{B}_{\rho} (a, r))$.
    \item\label{proppara: approximate continuity} For Radon measure $\mu$ on $\Omega$, and function $f$ mapping $\mu$ almost every point of $\Omega$ into a separable metric space $Y$, we have that (see \cite[2.9.13]{MR41:1976}) $f$ is $\mu$ measurable if and only if $f$ is $(\mu, C)$ approximately continuous at $\mu$ almost all points of $\Omega$, i.e. for $\mu$ almost all $a \in \Omega$, and all $0 < \varepsilon < \infty$, 
    \begin{equation*}
        \lim_{r \downarrow 0} \mu (\mathbf{B}_{\rho} (a, r) \cap \{ b \, \colon \, \sigma (f (a), f (b)) \geq \varepsilon \}) / \mu (\mathbf{B}_{\rho} (a, r)) = 0,
    \end{equation*}
    where $\sigma$ is the metric on $Y$.
    \end{enumerate}
\end{propparagraph}

\subsection{Brakke Flow}\label{subsec: Brakke Flow preliminaries}

\begin{definition}\label{Ilmanen-def}
(\cite[6.3]{Ilmanen-EllipticRegularization})
For $U$, an open subset of $\mathbf R^n$, $\mu$ a Radon measure over $U$, and $\phi : U \to \mathbf R$ is nonnegative, class $2$ function with compact support, if the following four conditions are satisfied 
    \begin{enumerate}[label=B-\arabic{enumi}]
        \item \label{integral}
         $\mu \weight U_\phi$ is a weight of some $V\in \IVar_k(U)$.
        \item \label{locally Radon}
        $\|\delta V_\phi\|$ is Radon measure.
        \item 
        \label{locally absolutely continuous}
     $\|\delta V_\phi\|$ absolutely continuous with respect to $\|V_\phi\|$.
        \item 
        \label{weighted L2 mean curvature} 
        $\int \phi(x)\cdot |\bfh(V_\phi,x)|^2 \ d \|V_\phi\|_x<\infty$ ,
    \end{enumerate}
    then we define 
    \begin{equation}
        \label{regular case}\tag{{$\mathscr B$}}
        \textstyle \mathscr B(\mu,\phi)  = \int -\phi (x)\cdot |\bfh(V_\phi,x)|^2 +\grad \phi(x)\bullet \bfh(V_\phi,x)\ d \|V_\phi\|_x
    \end{equation} 
      Otherwise, we define
    $\mathscr B(\mu,\phi)=-\infty$.
\end{definition}

\begin{definition}\label{def: Ilmanen def Brakke flow}
    A function $\mu \colon J \rightarrow \mathscr{M}_{+} (U)$ is a Brakke flow, if and only if, for every non-negative function $\phi \colon U \rightarrow \mathbb{R}$ class 2, with compact support, 
    \begin{equation*}
        \overline{\D}_{s} \mu (s) (\varphi)|_{t} \leq \mathscr{B} (\mu (t), \phi) \quad \text{for every $t\in J$}.
    \end{equation*}
\end{definition}

\begin{propparagraph}[\protect{\cite[4.1]{LW-Space-Time-GrassmannMeasureBrakkeFlow}}]\label{proppara: space-time-Grassmann measure from Brakke flow}
    Recall that for a Brakke flow $\mu \colon J \rightarrow \mathscr{M}_{+} (U)$, for $\mathscr{L}^{1}$ almost all $t \in J$, there exists a unique $V (t) \in \mathbf{IV}_{k} (U)$, such that $\| V (t) \| = \mu (t)$ (see \cite[3.10.6]{LW-Space-Time-GrassmannMeasureBrakkeFlow}). 
    Then, by \cite[3.10.8]{LW-Space-Time-GrassmannMeasureBrakkeFlow}, there exists a canonical Radon measure $V$ over $J \times \mathbf{G}_{k} (U)$, associated to $\mu$, given by, 
    \begin{equation*}
      \textstyle  V (\psi) = \int \int \psi (t, x, S) \, d V (t)_{(x, S)} \, d \mathscr{L}^{1}_{t}, \hspace{0.5cm} \psi \in \mathscr{K} (J \times \mathbf{G}_{k} (U)),
    \end{equation*}
    with an associated Radon measure over $J \times U$, 
    \begin{equation*}
        \textstyle \| V \| (f) = \int f (t, x) \, d V_{(t, x, S)} = \int \int f (t, x) \, d \mu (t)_{x} \, d \mathscr{L}^{1}_{t}, \hspace{0.5cm} f \in \mathscr{K} (J \times U).
    \end{equation*}
\end{propparagraph}

\begin{theorem}[\protect{\cite[Theorem B]{LW-Space-Time-GrassmannMeasureBrakkeFlow}}]\label{thm:Space time Brakke flow}
    Suppose $V$ is a Radon measure over $J\times \G_k(U)$, and that for every non-negative $\omega \in \mathscr D(J)$, and every non-negative, class function $2$ $\phi :  U \to \mathbf R$, with compact support, we have that
    \begin{equation}\label{eq:Brakke's distributional inequality}
        - \textstyle \int \omega'(t) \phi(x) \ d \|V\|_{(t,x)} 
        \leq \textstyle \int \omega(t) \mathscr B(\|\langle V,t\rangle \|,\phi) \ d \mathscr L^1_t \in \mathbf R. 
    \end{equation}
    Then the following six statements hold:
    \begin{enumerate}[label=\ref{thm:Space time Brakke flow}.\arabic{enumi}]
        \item \label{Space time Brakke flow:0}
        For every $\phi : U \to \mathbf R$ of class $2$ with compact support, 
        the function $\|\langle V,t\rangle \|(\phi)$ for $t\in J$ is of locally bounded variation.
        \item For every $\psi \in \mathscr{K} (\mathbf{G}_{k} (U))$, and $\omega \in \mathscr D(J)$, 
        \[ V_{(t,x, S)}(\omega(t)\cdot \psi(x, S)) = \textstyle \int \omega(t)\cdot \langle V,t\rangle (\psi) \ d \mathscr L^1_t,\]
        
        \item \label{Space time Brakke flow:1} $\langle \|V\|^-,\cdot \rangle$ is left continuous, with domain $J$, and 
        \[  \lim\limits_{t \downarrow a} \langle \|V\|^{-},t\rangle  \leq \langle \| V\|^{-},a\rangle \ \text{for $a\in J$}.\]
        \item \label{Space time Brakke flow:2}
        $\langle \| V \|^{+},\cdot \rangle$ is right continuous, with domain $J$, and 
        \[  \lim\limits_{t \uparrow a} \langle \|V\|^{+},t\rangle  \geq \langle \| V\|^{+},a\rangle  \ \text{for $a\in J$}.\]
        \item \label{Space time Brakke flow:3}
        $\langle \|V \|^+,\cdot \rangle $ and $\langle \| V \|^-,\cdot \rangle$ are  Brakke flows, and $\langle \|V \|^{-},\cdot \rangle \geq \langle \| V \|^{+},\cdot \rangle$. 
        \item \label{Space time Brakke flow:4} 
        For a function $\nu \colon J \rightarrow \mathscr{M}_{+} (U)$, the following are equivalent:
        \begin{enumerate}
            \item $\nu$ is a Brakke flow, with 
            \[  \textstyle  \|V\|(\theta) = \int \int \theta(t,x) \ d \nu(t)_x \ d \mathscr L^1_t, \ \text{for $\theta \in \mathscr K(J\times U)$},\]
            \item for every $t \in J$, 
            \[  \langle \| V \|^{-} ,t \rangle \geq \nu(t) \geq \langle \| V \|^{+},t\rangle .\]
        \end{enumerate}
    \end{enumerate}
\end{theorem}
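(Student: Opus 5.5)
The approach is to reduce everything to one-dimensional monotonicity statements in $t$, by testing \eqref{eq:Brakke's distributional inequality} against fixed $\phi$ and a countable dense family of such $\phi$.

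\emph{Step 1: a bounded-variation estimate.} For a non-negative class $2$ function $\phi$ with compact support, completing the square in \eqref{regular case} gives
\[ \mathscr B(\mu,\phi)\le \int |\grad\phi|^2/(4\phi)\, d\mu \le C(\phi)\,\mu(\spt\phi), \]
where $C(\phi)$ depends on $\sup|\D^2\phi|$ through the standard inequality $|\grad\phi|^2\le 2\sup|\D^2\phi|\,\phi$. Set
\[ f_\phi(t)=\|\langle V,t\rangle\|(\phi), \qquad g(t)=\|\langle V,t\rangle\|(\chi), \]
where $\chi\ge\mathbf 1_{\spt\phi}$ is a fixed cutoff. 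Both are in $L^1_{\rm loc}(J)$ by Fubini for the Radon measure $\|V\|$. Then \eqref{eq:Brakke's distributional inequality} says $f_\phi'\le C(\phi)g$ in $\mathscr D'(J)$. Hence $f_\phi-C(\phi)\int_{t_0}^t g$ is a.e. equal to a nonincreasing function, which gives \ref{Space time Brakke flow:0}.

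\emph{Step 2: slicing and one-sided limits.} Statement 2 is essentially disintegration of $V$ with respect to $\mathscr L^1$, with the slices $\langle V,t\rangle$ identified by the Lebesgue differentiation theorem in $t$. By Step 1, the one-sided averages defining $\langle\|V\|^\pm,t\rangle(\phi)$ exist for every $t\in J$. They are the right and left limits of the monotone-corrected representative. Taking a countable family of $\phi$ dense in $\mathscr K(U)$ (in the sense of uniform approximation on compact sets, with support control), together with the local mass bounds, one gets Radon measures $\langle\|V\|^\pm,t\rangle$ for all $t$. Statements \ref{Space time Brakke flow:1} and \ref{Space time Brakke flow:2} then follow from the corresponding facts for a monotone function plus an absolutely continuous one. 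These facts are: left and right limits coincide with the left/right continuous versions, and the jump inequalities have the sign forced by monotone decrease. The same monotonicity gives $\langle\|V\|^-,t\rangle\ge\langle\|V\|^+,t\rangle$.

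\emph{Step 3: Brakke property of $\langle\|V\|^\pm,\cdot\rangle$.} This is the main obstacle. From \eqref{eq:Brakke's distributional inequality}, for $\mathscr L^1$-Lebesgue points $t<s$,
\[ f_\phi(s)-f_\phi(t)\le\int_t^s\mathscr B(\|\langle V,\tau\rangle\|,\phi)\,d\tau, \]
and passing to one-sided limits gives the same inequality for $\langle\|V\|^+,\cdot\rangle$. To bound $\overline{\D}_s\langle\|V\|^+,s\rangle(\phi)|_t$ by $\mathscr B(\langle\|V\|^+,t\rangle,\phi)$, I would use upper semicontinuity of $\mathscr B$ under weak convergence of measures. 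This is Brakke/Ilmanen compactness: if $\mu_i\to\mu$ and $\mathscr B(\mu_i,\phi)$ stays bounded below, then by Allard's integral compactness \ref{integral}--\ref{weighted L2 mean curvature} hold for $\mu$, and $\limsup\mathscr B(\mu_i,\phi)\le\mathscr B(\mu,\phi)$. One also needs to handle the case where $\mathscr B(\langle\|V\|^+,t\rangle,\phi)=-\infty$. There one shows the averaged quantities $\frac1{s-t}\int_t^s\mathscr B$ tend to $-\infty$, otherwise a sequence of good slices would converge to $\langle\|V\|^+,t\rangle$ with bounded $\mathscr B$, a contradiction. The technical care goes into choosing good times $\tau\downarrow t$ where $\|\langle V,\tau\rangle\|\to\langle\|V\|^+,t\rangle$ and $\mathscr B$ is nearly maximal. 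The argument for $\langle\|V\|^-,\cdot\rangle$ is symmetric, using left limits and $\tau\uparrow t$ (and the liminf-from-the-left part of the upper derivative).

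\emph{Step 4: statement \ref{Space time Brakke flow:4}.} For (a)$\Rightarrow$(b): a Brakke flow $\nu$ satisfies $\nu(s)(\phi)-C\int g$ nonincreasing for every $t$, not just almost every $t$. Its one-sided limits therefore agree with the averaged ones, since $\nu=\|\langle V,\cdot\rangle\|$ almost everywhere. This sandwiches $\nu(t)$ between $\langle\|V\|^-,t\rangle$ and $\langle\|V\|^+,t\rangle$. For (b)$\Rightarrow$(a): the integral identity is immediate, because $\nu$ agrees with the slices at all continuity points, which are almost every $t$. The Brakke inequality at $t$ comes from $\nu(s)\le\langle\|V\|^-,s\rangle$ for $s>t$ and $\nu(t)\ge\langle\|V\|^+,t\rangle$, combined with the Step 3 estimates.
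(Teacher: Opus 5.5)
The paper does not prove this statement. It is quoted from \cite[Theorem B]{LW-Space-Time-GrassmannMeasureBrakkeFlow}, so there is no in-paper argument to compare with, and I assess your outline on its own terms.

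The overall architecture is sound: the monotone-plus-absolutely-continuous structure of $t \mapsto \|\langle V, t\rangle\|(\phi)$, one-sided limits at every $t$, and upper semicontinuity of $\mathscr B(\cdot,\phi)$ along well-chosen slices. However, Steps 3 and 4 as written do not establish the Brakke inequality. The upper derivative $\overline{\D}_s$ is a two-sided $\limsup_{s \to t}$, and the required bound is by $\mathscr B(\nu(t),\phi)$ for the measure $\nu(t)$ actually placed at time $t$.

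For $\langle \|V\|^{+},\cdot\rangle$ you only treat $s \downarrow t$. For $s \uparrow t$ the slices converge to $\langle \|V\|^{-},t\rangle$, so semicontinuity only gives a bound by $\mathscr B(\langle\|V\|^{-},t\rangle,\phi)$. At a jump time this need not be bounded by $\mathscr B(\langle\|V\|^{+},t\rangle,\phi)$. The same problem appears in (b)$\Rightarrow$(a): your two inequalities bound the $\limsup_{s\downarrow t}$ of the quotient by $\mathscr B(\langle\|V\|^{+},t\rangle,\phi)$, not by $\mathscr B(\nu(t),\phi)$, and they say nothing about $s\uparrow t$.

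The missing step is a dichotomy based on two facts: jumps are downward, and $\mathscr B(\mu,\phi)$ depends only on $\mu \weight U_\phi$. Since $\langle\|V\|^{-},t\rangle \geq \nu(t) \geq \langle\|V\|^{+},t\rangle$, one of two cases holds. Either $\nu(t)\weight U_\phi = \langle\|V\|^{+},t\rangle \weight U_\phi$, and then your right-hand estimate is the correct one. Or $\nu(t)(\phi) > \langle\|V\|^{+},t\rangle(\phi) = \lim_{s\downarrow t}\langle\|V\|^{-},s\rangle(\phi) \geq \limsup_{s\downarrow t}\nu(s)(\phi)$, and then the right difference quotients tend to $-\infty$. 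The left side is symmetric, using $\langle\|V\|^{-},t\rangle$ and $\lim_{s\uparrow t}\langle\|V\|^{+},s\rangle(\phi) = \langle\|V\|^{-},t\rangle(\phi)$. With this, (b)$\Rightarrow$(a) holds for every $\nu$ squeezed between the two, and statement 5 is the special case $\nu = \langle\|V\|^{\pm},\cdot\rangle$.

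There are also smaller points. In Step 1 you rewrite the left side of the hypothesis as $-\int \omega' f_\phi \, d\mathscr L^1$. This presupposes that the time marginal of $\|V\|$ is absolutely continuous with respect to $\mathscr L^1$, which is also what statement 2 needs beyond plain disintegration. It is true, but it must be derived. The hypothesis says the distributional derivative of the time marginal $m_\phi$ of $\phi \|V\|$ is at most $C(\phi) g \in \mathbf{L}_{1}^{\mathrm{loc}}(J)$. Hence $m_\phi$ minus an absolutely continuous function is nonincreasing, so $m_\phi \ll \mathscr L^1$; an atom in time would violate the inequality.

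In (a)$\Rightarrow$(b), your claim that $\nu(\cdot)(\phi) - C\int g$ is nonincreasing at every time is not immediate for an arbitrary Brakke flow, but you do not need it. The bound $\overline{\D}_s \nu(s)(\phi)|_t \leq C(\phi)\,\nu(t)(\spt \phi) < \infty$ at the single time $t$ gives $\nu(s)(\phi) \leq \nu(t)(\phi) + K(s-t)$ just above $t$ and the reverse inequality just below. Averaging in $s$, using $\nu = \|\langle V,\cdot\rangle\|$ almost everywhere, then sandwiches $\nu(t)$ directly.

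Finally, the semicontinuity lemma carries the real analysis. If you prove rather than cite it, the only delicate term is $\int \grad\phi \bullet \bfh$ when mass escapes to $\{\phi = 0\}$. That term is handled by rewriting it as $-\int S_\natural \bullet \D^2\phi \, dV_\phi$ and using $\D^2\phi \geq 0$ on $\{\phi = 0\}$.
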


\begin{definition}\label{def: space-time-Grassmann Brakke flow}
    We say a Radon measure $V$ over $J \times \mathbf{G}_{k} (U)$ is a \textit{space-time-Grassmann Brakke flow} if and only if the  following three statements hold,
    \begin{enumerate}
        \item For $\mathscr{L}^{1}$ almost all $t \in J$, $\langle V, t \rangle \in \mathbf{IV}_{k} (U)$.
        \item For every non-negative, class 2 function $\phi \colon U \rightarrow \mathbf{R}$, with compact support $\mathscr B(\|\langle V,\cdot\rangle \|,\phi) \in \mathbf{L}_{1}^{\text{loc}} (J)$.
        \item For every non-negative $\omega \in \mathscr{D} (J)$, and every non-negative, class 2 function $\phi \colon U \rightarrow \mathbf{R}$, with compact support, we have that, 
    \begin{equation*}
        - \textstyle \int \omega'(t) \phi(x) \ d \|V\|_{(t,x)} 
        \leq \textstyle \int \omega(t) \mathscr B(\|\langle V,t\rangle \|,\phi) \ d \mathscr L^1_t
    \end{equation*}
    for every $\omega \in \mathscr D(J)$.
    \end{enumerate}
\end{definition}

\begin{remark}
    From \cite[4.12]{LW-Space-Time-GrassmannMeasureBrakkeFlow}, we have that Definition \ref{def: Ilmanen def Brakke flow} and Definition \ref{def: space-time-Grassmann Brakke flow} are equivalent.
\end{remark}

\begin{propparagraph}\label{proppara: extended Brakke inequality}
    (\cite[4.13]{LW-Space-Time-GrassmannMeasureBrakkeFlow})
    Suppose $V$ is a space-time-Grassmann Brakke flow over $J \times \mathbf{G}_{k} (U)$, $\phi \colon J \times U \rightarrow \mathbf{R}$ is a non-negative class 2 function with compact support, then the following inequality holds, 
    \begin{eqnarray*}
        - \textstyle \int \partial_{s} \phi (s, x)|_{t} \, d \| V \|_{(t, x)} &\leq& \textstyle\int - \phi(t, x) |\mathbf{h} (\langle V, t \rangle, x)|^{2} \\ 
        && \hspace{1cm} + \mathrm{grad}_{y} \phi (t, y)|_{x} \bullet \mathbf{h} (\langle V, t \rangle, x) \, d \| V \|_{(t, x)}.
    \end{eqnarray*}
\end{propparagraph}

\begin{propparagraph}\label{proppara: def of representative Brakke Flow}
    For a space-time-Grassmann Brakke flow $V \in \mathscr{M}_{+} (J \times \mathbf{G}_{k} (U))$, we say that any Brakke flow $\nu \colon J \rightarrow \mathscr{M}_{+} (U)$ is a \textit{representative} of $V$, if either of the two equivalent conditions of \ref{Space time Brakke flow:4} hold for $\nu$.
\end{propparagraph}

\begin{remark}
    By \ref{Space time Brakke flow:4}, a space-time-Grassmann Brakke flow $V$ is uniquely determined by its space-time weight $\| V \|$.
\end{remark}

\begin{proposition}\label{proppara: L1 to L infinity mass bound}
    Let $I \subset J$ be a non-degenerate compact interval, and $K \subset U$ be compact. 
    Then, there exists a positive, finite constant $C = C (I, K)$, such that for any space-time-Grassmann Brakke flow $V$ on $J \times U$, 
    \begin{equation*}
        \sup \{ \langle \| V \|^{-}, t \rangle (K) \colon t \in I \} \leq C (I, K) \| V \| (J \times U).
    \end{equation*}
\end{proposition}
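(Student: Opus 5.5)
We prove the bound by the standard Brakke-type monotonicity argument for a localized mass. We use a cutoff that is first made time dependent so that it absorbs the mean curvature term. Then we average over a starting time to pass from the $L^{\infty}$-in-time quantity to the space-time mass $\| V \|(J \times U)$.

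Fix compact $K \subset U$ and a compact interval $I = [a,b] \subset J$. Choose $\delta > 0$ with $[a - \delta, b] \subset J$. Choose a non-negative function $\phi \in C^{2}_{c}(U)$ with $0 \le \phi \le 1$ and $\phi = 1$ on $K$. We may take $\phi = \eta^{2}$ with $\eta \in C^{2}_{c}(U)$, so that $|\mathrm{grad}\,\phi|^{2}/\phi \le 4|\mathrm{grad}\,\eta|^{2}$ is bounded.

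The key step is the pointwise Brakke estimate from Definition \ref{Ilmanen-def}. Applying Cauchy--Schwarz to $\mathrm{grad}\,\phi \bullet \mathbf{h}$ in \eqref{regular case} gives
\begin{equation*}
    \mathscr{B}(\mu, \phi) \le \int \frac{|\mathrm{grad}\,\phi|^{2}}{4\phi} \, d\mu \le C_{1}(\phi)\, \mu(\spt \phi).
\end{equation*}
To close this into a Gr\"onwall inequality, we replace $\phi$ by the standard choice $\phi = \eta^{4}$ (or any power at least $2$). In the region where $\phi$ is small, the ratio $|\mathrm{grad}\,\phi|^{2}/\phi$ is then controlled by $C \phi^{1/2}$. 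We do not need that refinement, however. It suffices to bound $\mu(t)(\spt \phi)$ crudely, using a second cutoff $\tilde\phi \ge \mathbf{1}_{\spt \phi}$ with compact support in $U$. This yields
\begin{equation*}
    \overline{\D}_{s}\, \mu(s)(\phi)|_{t} \le C_{1}\, \mu(t)(\tilde\phi).
\end{equation*}

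Since we only want the bound in terms of the total mass $\| V \|(J \times U)$, we avoid Gr\"onwall altogether and integrate. Take the representative $\nu = \langle \| V \|^{-}, \cdot \rangle$, which is a Brakke flow by \ref{Space time Brakke flow:3}. By \ref{Space time Brakke flow:0}, the function $t \mapsto \nu(t)(\phi)$ is of locally bounded variation. Its upper derivative bound, together with the fact that Brakke flows have $s \mapsto \mu(s)(\phi) - Cs$ non-increasing (this is what the distributional inequality \eqref{eq:Brakke's distributional inequality} gives, with jumps only downward), gives for $\tau \le t$
\begin{equation*}
    \nu(t)(\phi) \le \nu(\tau)(\phi) + C_{1} \int_{\tau}^{t} \nu(s)(\tilde\phi) \, ds.
\end{equation*}
For the integral term, note that $\nu(s) = \langle \| V \|, s \rangle$ for almost every $s$. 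Hence $\int_{J} \nu(s)(\tilde\phi)\, ds \le \| V \|(J \times U)$.

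Now fix $t \in I$ and average over $\tau \in [a - \delta, a]$. This gives
\begin{equation*}
    \langle \| V \|^{-}, t \rangle(K) \le \nu(t)(\phi) \le \delta^{-1} \| V \|(J \times U) + C_{1} \| V \|(J \times U).
\end{equation*}
So $C(I, K) = \delta^{-1} + C_{1}(\phi)$ works, and it depends only on $I$, $K$ and the fixed choices of $\phi$ and $\delta$, not on $V$.

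The main obstacle is justifying the one-sided integrated inequality for the representative $\nu$ at every $t$, not merely almost every $t$. We would derive it directly from the distributional inequality \eqref{eq:Brakke's distributional inequality}, with $\omega$ approximating $\mathbf{1}_{[\tau, t]}$. For the endpoint values we would use the left continuity and upper semicontinuity properties of $\langle \| V \|^{-}, \cdot \rangle$ from \ref{Space time Brakke flow:1}.
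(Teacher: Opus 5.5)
Your argument is correct, but it follows a different route from the paper's.

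\textbf{The paper's route.} It invokes the localized mass estimate \cite[3.10.1]{LW-Space-Time-GrassmannMeasureBrakkeFlow}. This bounds $\langle \| V \|^{-}, t' \rangle (\mathbf{B}(x, r))$ for $T \leq t' \leq T + r^{2}(2k)^{-1}$ by $16 \langle \| V \|^{-}, T \rangle (\mathbf{B}(x, 2r))$. The paper then covers $K$ by finitely many balls whose doubles lie in $U$. By averaging, it picks a starting time shortly before a short time window at which the single-time mass is at most $M/(a-s)$. Finally it subdivides $I$ into windows of length at most $r_{1}^{2}(4k)^{-1}$.

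\textbf{Your route.} You use the elementary bound $\mathscr{B}(\mu, \eta^{2}) \leq \sup |\mathrm{grad}\, \eta|^{2} \, \mu(\spt \eta)$, which comes from $\mathrm{grad}\, \phi \bullet \mathbf{h} \leq \phi |\mathbf{h}|^{2} + |\mathrm{grad}\, \phi|^{2}/(4\phi)$. You integrate the distributional inequality from $\tau$ to $t$ and note that the error term is controlled by the space-time mass itself. Averaging over $\tau \in [a - \delta, a]$ then handles all of $I$ at once, with $C = \delta^{-1} + \sup|\mathrm{grad}\,\eta|^{2}$.

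\textbf{Comparison.} Your route is more self-contained: no covering, no subdivision of $I$, no clearing-out type lemma. The paper's route rests on a sharper, scale-invariant local statement, controlling later masses by the mass at a single earlier time in a larger ball. That is more than this proposition needs.

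\textbf{Two small remarks.}

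First, drop the parenthetical appeal to monotonicity of $s \mapsto \mu(s)(\phi) - Cs$. Such a $C$ depends on $\sup_{s} \mu(s)(\spt \phi)$, which is exactly what you are bounding. In the paper this monotonicity is obtained in the proof of \ref{prop: convergence of Brakke flows} only after invoking the present proposition, so relying on it would be circular.

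Second, the step you call the main obstacle is mild. Set $g(s) = \langle \| V \|, s \rangle (\phi)$ and $h(s) = \langle \| V \|, s \rangle (\tilde{\phi})$. The distributional inequality gives $g(s) \leq g(\tau) + C_{1} \int_{\tau}^{s} h \, d\mathscr{L}^{1}$ for all $\tau < s$ with $\tau, s$ outside a fixed $\mathscr{L}^{1}$ null set. Averaging in $s$ over $[t - \varepsilon, t]$ and letting $\varepsilon \downarrow 0$ yields the same inequality with $\langle \| V \|^{-}, t \rangle (\phi)$ on the left, for every $t$. This follows directly from the definition of $\langle \| V \|^{-}, t \rangle$. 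Almost every $\tau$ suffices, since you average over $\tau$ anyway.
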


 \begin{proof}
     First we recall, from \cite[3.10.1]{LW-Space-Time-GrassmannMeasureBrakkeFlow} (c.f. \cite[3.3]{Lahiri2017EqualityOT}), that for all $x \in K$, $T \in J$ and $r > 0$ such that $\mathbf{B} (x, 2 r) \subset U$, we have that, 
    \begin{eqnarray*}
        && \sup \left\{ \langle \| V \|^{-}, t' \rangle (\mathbf{B} (x, r)) \, \colon t' \in J \cap \{t \colon T \leq t \leq T + r^{2} (2 k)^{-1}\} \right\} \\ 
        && \hspace{2cm} \leq 16 \langle \| V \|^{-}, T \rangle (\mathbf{B} (x, 2 r)).
    \end{eqnarray*}
    One can find a finite collection of balls $\{ \mathbf{B} (x_{i}, r_{i}) \colon i =1, \ldots, p\}$, such that for each $i = 1, \ldots, p$, $x_{i} \in K$, $\mathbf{B} (x_{i}, 2 r_{i}) \subset U$, and, 
    \begin{equation*}
        \textstyle K \subset \bigcup_{i = 1}^{p} \mathbf{B} (x_{i}, r_{i}).
    \end{equation*}
    We may also impose that $r_{1} \leq r_{2} \leq \ldots \leq r_{p}$.
    Now, let $a = \inf I$, $b = \sup I$, and $M = \| V \| (J \times U)$, and we may assume $M<\infty$. Taking some $s \in J \cap \{ t \colon \, a - r_{1}^{2} (4 k)^{-1} \leq t< a \}$, there exists an $s_{1} \in J \cap \{ t  \, \colon \, s \leq t \leq a \}$, such that, 
    \begin{equation*}
        \| \langle V^{-}, s_{1} \rangle \| (U) \leq \frac{M}{a - s}. 
    \end{equation*}
    Therefore, 
    \begin{equation*}
        \sup \left\{ \| \langle V^{-}, t' \rangle \| (K) \, \colon \, a \leq t' \leq a + r_{1}^{2}(4k)^{-1} \right\} \leq \frac{16 p M}{a - s},
    \end{equation*}
    and the conclusion follows by appropriately subdividing $\{ t : a \leq t \leq b\}$ into intervals of length at most $r_{1}^{2} (4 k)^{-1}$.
 \end{proof}

\begin{propparagraph}
    For a point $(t_{0}, x_{0}) \in \mathbf{R} \times \mathbf{R}^{n}$, we recall the $k$-dimensional backwards heat kernel, centred at $(t_{0}, x_{0})$,
    \begin{equation*}
        \Phi_{(t_{0}, x_{0})} (t, x) = (4 \pi (t_{0} - t))^{-\frac{k}{2}} \exp (- (4 (t_{0} - t))^{-1} |x - x_{0}|^{2}),
    \end{equation*}
    for points $(t, x) \in \mathbf{R} \times \mathbf{R}^{n}$, with $- \infty < t < t_{0}$.
\end{propparagraph}

\begin{theorem}[see \protect{\cite[4.13]{EckerMCFBook}}, \protect{\cite[6.2]{Lahiri2014}}, \protect{\cite[3.4]{tonegawa2019brakke}}]\label{prop:monotonicity formula}
    Suppose $V$ be a space-time-Grassmann Brakke flow over $J \times \mathbf{G}_{k} (U)$, $(t_{0}, x_{0}) \in \mathbf{R} \times \mathbf{R}^{n}$, and $\phi \colon J \times U$ a non-negative class 2 function, with compact support, then for all non-negative $\omega \in \mathscr{D} (J \cap \{ t \colon  t < t_{0} \})$, the following inequality holds
    \begin{eqnarray*}
        && \textstyle - \int \omega'(t) \Phi_{(t_{0}, x_{0})} (t, x) \phi (t, x) \, d \| V \|_{(t, x)} \\
        && \textstyle\hspace{0.5cm} \leq - \int \omega (t) \left| \mathbf{h} (\langle V, t \rangle, x) + \frac{S^{\perp}_{\natural} (x - x_{0})}{2 (t_{0} - t)} \right|^{2} \Phi_{(t_{0}, x_{0})} (t, x) \phi(t,x) \, d V_{(t, x, S)} \\
        &&\textstyle \hspace{1cm} + \int \omega (t) (\partial_{s} \phi (s, x )|_{t} - S_{\natural} \bullet \D^{2}_{y} \phi (t, y))|_{x} \Phi_{(t_{0}, x_{0})} (t, x) \, d V_{(t, x, S)}.
    \end{eqnarray*}
\end{theorem}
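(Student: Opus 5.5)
The plan is to reduce the statement to the standard monotonicity computation for Brakke's inequality. Apply the extended Brakke inequality \ref{proppara: extended Brakke inequality} to the non-negative, compactly supported test function
\[
\psi(t,x) = \omega(t)\Phi_{(t_0,x_0)}(t,x)\phi(t,x).
\]
This function is of class 2 on $J \times U$, since $\omega$ has compact support in $\{t \colon t<t_0\}$, where $\Phi$ is smooth. The left-hand side of \ref{proppara: extended Brakke inequality} is $-\int \partial_s\psi\, d\|V\|$. Expanding $\partial_s\psi$ gives three terms, $\omega'\Phi\phi$, $\omega\,\partial_s\Phi\,\phi$ and $\omega\Phi\,\partial_s\phi$. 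We keep the first on the left and move the other two to the right. On the right, $\mathrm{grad}\,\psi = \omega(\Phi\,\mathrm{grad}\,\phi + \phi\,\mathrm{grad}\,\Phi)$, and we use $\mathrm{grad}\,\Phi = -\frac{x-x_0}{2(t_0-t)}\Phi$.

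Next, for $\mathscr{L}^1$ almost all $t$ (exactly those where $\langle V,t\rangle\in\mathbf{IV}_k(U)$ has locally $L^2$ mean curvature), we use Brakke's perpendicularity theorem: $\mathbf{h}(\langle V,t\rangle,x)\perp S$ for $\langle V,t\rangle$ almost all $(x,S)$. This lets us replace $\mathbf{h}\bullet(x-x_0)$ by $\mathbf{h}\bullet S^\perp_\natural(x-x_0)$, so the term $\mathbf{h}\bullet \mathrm{grad}\,\Phi$ becomes a cross term. We then apply the first variation formula to the time slice $\langle V,t\rangle$ with the vector field $\omega\phi\,\mathrm{grad}\,\Phi$, or rather $\Phi\,\mathrm{grad}\,\phi$, and integrate in $t$ using \ref{Space time Brakke flow:0}'s disintegration. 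This converts $\int \Phi\,\mathrm{grad}\,\phi\bullet \mathbf{h}\, d\|V\|$ into
\[
-\int \big(S_\natural\bullet \D^2\phi\,\Phi + S_\natural(\mathrm{grad}\,\phi)\bullet \mathrm{grad}\,\Phi\big)\, dV .
\]
The remaining $\mathrm{grad}\,\phi\bullet\mathrm{grad}\,\Phi$ term should combine with the term $\phi\,\mathbf{h}\bullet\mathrm{grad}\,\Phi$, via $S^\perp$ and $S$ components, in the standard way.

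The key algebraic step is the pointwise identity
\[
\partial_t\Phi + S_\natural\bullet \D^2\Phi + \frac{|S^\perp_\natural(x-x_0)|^2}{4(t_0-t)^2}\Phi = 0,
\]
valid for every $S\in\mathbf G(n,k)$, which is where the choice of exponent $k/2$ enters. With it, one completes the square:
\[
-\phi|\mathbf h|^2\Phi - \phi\,\mathbf h\bullet\frac{S^\perp_\natural(x-x_0)}{t_0-t}\Phi - \frac{|S^\perp_\natural (x-x_0)|^2}{4(t_0-t)^2}\phi\Phi = -\Big|\mathbf h+\frac{S^\perp_\natural(x-x_0)}{2(t_0-t)}\Big|^2\phi\Phi .
\]
The terms involving $\mathrm{grad}\,\phi$ and $\D^2$ must be arranged so that only $(\partial_s\phi - S_\natural\bullet\D^2\phi)\Phi$ survives. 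I would follow the bookkeeping of \cite[4.13]{EckerMCFBook} in the smooth case. There, the $\mathrm{grad}\,\phi\bullet\mathrm{grad}\,\Phi$ terms cancel by writing $S_\natural\bullet \D^2(\phi\Phi)$ and using the first variation with the vector field $\mathrm{grad}(\phi\Phi)$ applied to the $\mathbf h\bullet\mathrm{grad}\,\phi$ contribution. Throughout, one integrates against $V$ rather than $\|V\|$, since $S$ appears.

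The main obstacle is justifying the first variation and perpendicularity steps measure-theoretically. Each slice satisfies them only for almost every $t$, and the integrability of $|\mathbf h|^2\Phi\phi$ needs control; it comes from the definition (the $\mathbf L^1_{\mathrm{loc}}$ condition on $\mathscr B$) together with the compact support of $\omega$ away from $t_0$. One also has to check that the extended inequality \ref{proppara: extended Brakke inequality} applies to $\psi$, which is not compactly supported in $x$ unless $\phi$ is. Here $\phi$ has compact support in $J\times U$, so this holds.
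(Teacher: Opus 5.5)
Your route is the right one, and it is the argument the paper relies on. The paper gives no proof of \ref{prop:monotonicity formula} beyond citing \cite[4.13]{EckerMCFBook}, \cite[6.2]{Lahiri2014}, \cite[3.4]{tonegawa2019brakke}. Those sources obtain the formula exactly as you propose: test Brakke's inequality with $\omega\phi\Phi$, use perpendicularity of $\mathbf h$ and the first variation formula on almost every slice, and apply the heat kernel identity $\partial_t\Phi + S_\natural\bullet\D^2\Phi = -\frac{|S^\perp_\natural(x-x_0)|^2}{4(t_0-t)^2}\Phi$, which you state correctly.

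The step you leave to ``the standard way'' is, however, described incorrectly. After you integrate $\Phi\,\mathrm{grad}\,\phi\bullet\mathbf h$ by parts, the term $-\int \omega\, S_\natural(\mathrm{grad}\,\phi)\bullet\mathrm{grad}\,\Phi\,dV$ does not combine with $\int\omega\phi\,\mathbf h\bullet\mathrm{grad}\,\Phi\,d\|V\|$ by splitting into $S$ and $S^\perp$ components. The two integrands are unrelated pointwise. Your first instinct, the first variation with $\phi\,\mathrm{grad}\,\Phi$, is needed \emph{in addition to} the one with $\Phi\,\mathrm{grad}\,\phi$, not instead of it:
\[
-\int \omega\, S_\natural(\mathrm{grad}\,\phi)\bullet\mathrm{grad}\,\Phi \, dV = \int\omega\phi\, S_\natural\bullet\D^2\Phi\, dV + \int\omega\phi\,\mathbf h\bullet\mathrm{grad}\,\Phi\, d\|V\| .
\]
Equivalently, apply the first variation once with $\Phi\,\mathrm{grad}\,\phi - \phi\,\mathrm{grad}\,\Phi$. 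This identity does two things:
\begin{enumerate}
\item It supplies the $S_\natural\bullet\D^2\Phi$ term on which your heat kernel identity acts.
\item It supplies a second copy of $\phi\,\mathbf h\bullet\mathrm{grad}\,\Phi$, so the cross term becomes $-\phi\,\mathbf h\bullet\frac{S^\perp_\natural(x-x_0)}{t_0-t}\Phi$, with the coefficient your completed square requires.
\end{enumerate}
After this, exactly $(\partial_s\phi - S_\natural\bullet\D^2\phi)\Phi$ remains.

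Your alternative, integrating the whole $\mathbf h\bullet\mathrm{grad}(\phi\Phi)$ term by parts, does not avoid this. It produces $-\phi\, S_\natural\bullet\D^2\Phi$ with the wrong sign and still needs the identity above, applied twice.

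One small correction: the slice disintegration you use is the second item of \ref{thm:Space time Brakke flow}. It is not \ref{Space time Brakke flow:0}, which is the bounded variation statement.
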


\begin{propparagraph}
    From \ref{prop:monotonicity formula}, one obtains the existence of the Gaussian density of $V$ at points $(t_{0}, x_{0}) \in J \times U$, denoted $\GDensity(V, t_{0}, x_{0})$ (see \cite[4.16(4)]{EckerMCFBook}).
    In particular (see \cite[(4.17)]{EckerMCFBook}), there exists a finite, non-negative number $\GDensity (V, t_{0}, x_{0})$, such that for any function $f \in \mathscr{K} (J \times U)$, and represetative $\mu$ of $V$,
    \begin{equation*}
        f (t_{0}, x_{0}) \GDensity (V, t_{0}, x_{0}) =\textstyle \lim\limits_{t \uparrow t_{0}} \int f (t, x) \rho_{(t_{0}, x_{0})} (t, x) \, d \mu(t)_{x}.
    \end{equation*}
\end{propparagraph}

\begin{proposition}[Properties of the Gaussian Density \protect{\cite[4.19, 4.20, 4.23]{EckerMCFBook}}]\label{prop:properties of Gaussian density}
    Suppose $V$ is a space-time-Grassmann Brakke flow over $J \times \mathbf{G}_{k} (U)$, then the following three statements hold: 
    \begin{enumerate}[label=\ref{prop:properties of Gaussian density}.\arabic{enumi}]
        \item \label{prop:properties of Gaussian density:1} If $(t_i,x_i) \to (t,x)$ as $i\to\infty$, then 
        \[ \limsup \limits_{i\to\infty} \GDensity(V,t_i,x_i) \leq \GDensity(V,t,x).\]
        \item \label{prop:properties of Gaussian density:2} For $\| V \|$ almost all $(t, x) \in J \times U$, we have that
        \begin{equation*}
            \Theta (V, t, x) \geq \textstyle \Density^{k} (\| \langle V, t \rangle \|, x).
        \end{equation*}
        \item \label{prop:properties of Gaussian density:3} We have that, 
        \begin{equation*}
            \spt  \| V \| =  J \times U \cap \{ (t, x) \colon \GDensity (V, t, x) \geq 1\} ,
        \end{equation*}
        and
        \begin{equation*}
            (J \times U) \sim \spt \| V \| = J \times U \cap \{ (t, x) \colon \GDensity (V, t, x) = 0 \}.
        \end{equation*}
    \end{enumerate}
\end{proposition}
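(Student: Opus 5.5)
The plan is to transfer the three classical properties of the Gaussian density, as stated in \cite[4.19, 4.20, 4.23]{EckerMCFBook} for Brakke flows $\mu \colon J \to \mathscr{M}_{+}(U)$, to the space-time-Grassmann setting. The dictionary is the representative correspondence \ref{proppara: def of representative Brakke Flow} together with \ref{Space time Brakke flow:4}. The key observation is that $\GDensity(V,t_0,x_0)$ does not depend on the choice of representative. Indeed, any two representatives $\mu,\nu$ of $V$ satisfy $\langle \|V\|^-,t\rangle \ge \mu(t),\nu(t) \ge \langle \|V\|^+,t\rangle$, and these two bounds coincide off a countable set of times. This is because $\langle \|V\|^{-},t\rangle(\phi)$ and $\langle \|V\|^{+},t\rangle(\phi)$ are the left and right limits of a function of locally bounded variation (\ref{Space time Brakke flow:0}, \ref{Space time Brakke flow:1}, \ref{Space time Brakke flow:2}), and it suffices to use countably many $\phi$. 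Since the Gaussian density is defined as a limit $t\uparrow t_0$, with the localized Gaussian integrals monotone by \ref{prop:monotonicity formula}, it is unaffected by changing the flow on a countable set of times. So we fix one representative $\mu$ and apply the classical statements to it.

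For \ref{prop:properties of Gaussian density:1}, we follow \cite[4.19]{EckerMCFBook}. Monotonicity, in the localized form \ref{prop:monotonicity formula} with a cutoff $\phi$, gives that $r \mapsto \int \Phi_{(t_i,x_i)}(t_i-r^2,\cdot)\phi \, d\mu(t_i-r^2)$ is, up to an error $O(r)$ controlled by $\phi$ and the local mass bound \ref{proppara: L1 to L infinity mass bound}, non-decreasing in $r$. Hence $\GDensity(V,t_i,x_i)$ is bounded above by this quantity at a fixed small $r$, plus that error. We choose $r$ with $t-r^2$ a continuity time of $\mu$, and let $i\to\infty$. The kernels converge uniformly on compact sets away from the singular time, and the mass at time $t_i-r^2$ converges to the mass at $t-r^2$. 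To justify the latter, we average over a small interval of times $r$, using the space-time measure $\|V\|$ so that only weak convergence of $\|V\|$ against continuous functions is needed. Finally we let $r\downarrow 0$.

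For \ref{prop:properties of Gaussian density:2}, we use \cite[4.20]{EckerMCFBook}. For $\mathscr{L}^1$ almost every $t$, $\mu(t)=\|\langle V,t\rangle\|$ is the weight of an integral varifold with $L^2$ mean curvature. For such $t$, and for $\mu(t)$ almost every $x$, the inequality $\Theta \ge \Density^k$ holds. To see this, test monotonicity between times $t-r^2$ and $t$ and use that the heat kernel concentrates at scale $r$; the Brakke inequality only allows mass to decrease forward in time, which gives the inequality in the right direction. Fubini, applied with \ref{proppara: space-time-Grassmann measure from Brakke flow}, converts ``a.e. $t$, then $\mu(t)$-a.e. $x$'' into ``$\|V\|$ almost all $(t,x)$''.

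For \ref{prop:properties of Gaussian density:3}, we first show that $\Theta\ge 1$ on $\spt\|V\|$. By \ref{prop:properties of Gaussian density:2}, and integrality giving $\Density^k\ge1$ a.e., the set where $\Theta\ge1$ is $\|V\|$-dense in $\spt\|V\|$. Upper semicontinuity \ref{prop:properties of Gaussian density:1} then propagates $\Theta\ge1$ to every point of the support. Conversely, if $(t_0,x_0)\notin\spt\|V\|$, then $\|V\|$ vanishes on a parabolic neighbourhood, so $\mu(t)=0$ near $x_0$ for almost every $t$ near $t_0$. Brakke's inequality then forces $\mu(t)$ to vanish near $x_0$ for all $t<t_0$ close to $t_0$, which gives $\Theta=0$. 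These two facts yield both displayed identities.

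The main obstacle is \ref{prop:properties of Gaussian density:1}. The difficulty is controlling the convergence of the slice masses $\mu(t_i-r^2)$, given that representatives can jump at countably many times. Working with $\|V\|$ and averaging in time resolves this.
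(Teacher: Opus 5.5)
Your proposal is correct and takes essentially the paper's route. The paper gives no argument beyond citing \cite[4.19, 4.20, 4.23]{EckerMCFBook}, implicitly applying these classical facts to a representative of $V$ (with $\GDensity(V,t_0,x_0)$ independent of the representative, as you justify), and your sketches of the three parts are the standard arguments. One point to make precise: in \ref{prop:properties of Gaussian density:2}, the monotonicity between times $t-r^2$ and $t$ must be applied to the shifted kernel $\Phi_{(t+\sigma^{2},x)}$, which is regular at time $t$, letting $\sigma\downarrow 0$ before $r\downarrow 0$; also, in \ref{prop:properties of Gaussian density:1} the time averaging is unnecessary once $t-r^{2}$ is a continuity time of the representative.
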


\begin{proposition}[Mean value inequality \protect{\cite[4.25]{EckerMCFBook}}]\label{prop: Brakke flow mean value inequality}
    There exists a finite, positive constant $c$, dependent only $n$, and $k$, such that the following holds. 
    Suppose $V$ is a $k$-dimensional space-time-Grassmann Brakke flow over $J \times \mathbf{G}_{k} (U)$, and $f \colon J \times U$ is non-negative, class 2 function, such that for $V$ almost all $(t, x, S) \in J \times U \times\mathbf{G} (n, k)$, 
    \begin{equation*}
        \partial_{s} f (s, x)|_{t} - S_{\natural} \bullet \D_{y} \grad_{y} f (t, y)|_{x} \leq 0.
    \end{equation*}
    Then, for $(t_{0}, x_{0}) \in  \spt \| V \| $, and $r > 0$  such that $\mathbf{P} (t_{0}, x_{0}, r) \subset J \times U$, we have that, 
    \begin{equation*}
       \textstyle  c \cdot f (t_{0}, x_{0})^{2} \leq \frac{1}{r^{k + 2}} \int_{\mathbf{P} (t_{0}, x_{0}, r)} f(t, x)^{2} \, d \| V \|_{(t, x)}.
    \end{equation*}
\end{proposition}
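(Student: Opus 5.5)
We prove the mean value inequality by using the monotonicity formula \ref{prop:monotonicity formula} with the weight $f^{2}$ and averaging over the backwards heat kernels centred at $(t_{0}, x_{0})$; this is the argument of \cite[4.25]{EckerMCFBook}, rewritten in terms of $V$ and $\|V\|$.

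\textbf{Step 1 (subsolution property of $f^{2}$).} Since $\partial_{t}(f^{2}) - S_{\natural}\bullet \D^{2}(f^{2}) = 2f(\partial_{t}f - S_{\natural}\bullet\D^{2}f) - 2|S_{\natural}\grad f|^{2}$, the function $g = f^{2}$ satisfies $\partial_{s}g - S_{\natural}\bullet \D^{2}_{y} g \leq 0$ for $V$ almost all $(t,x,S)$. We localise with a cutoff $\eta(t,x) = \chi(|x-x_{0}|^{2}/r^{2} + (t - t_{0})/r^{2})$, where $\chi$ is decreasing and equals $1$ near $0$, chosen so that $\spt \eta \cap \{t \leq t_{0}\}$ lies inside $\mathbf P(t_{0},x_{0},r)$. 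We then apply \ref{prop:monotonicity formula} with $\phi = g\eta$ and discard the nonpositive $\mathbf h$-square term. For $t < t_{0}$ this gives the differential inequality
\[
\tfrac{d}{dt}\textstyle\int g\eta\,\Phi_{(t_{0},x_{0})}\,d\mu(t) \leq \int \bigl(g(\partial_{t}\eta - S_{\natural}\bullet\D^{2}\eta) - 2 S_{\natural}\D g\bullet \D\eta\bigr)\Phi_{(t_{0},x_{0})}\,dV(t),
\]
understood distributionally in $\omega$.

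\textbf{Step 2 (controlling the error terms).} We choose $\eta$ so that $\partial_{t}\eta - S_{\natural}\bullet\D^{2}\eta \leq 0$ away from an annular region. On this annulus $\Phi_{(t_{0},x_{0})}$ is bounded by $C r^{-k}$, because $|x-x_{0}|\gtrsim r$ or $t_{0}-t \gtrsim r^{2}$ there. We absorb the cross term by Cauchy--Schwarz, using $2|\D g| \leq 4 f|\D f|$, or else avoid it by working with $\eta$ chosen so that the product $f^{2}\eta$ is handled directly. We expect this bookkeeping to be the main obstacle, since the gradient of $f$ is not controlled by the right hand side. The cleanest route is probably to note that $f\eta$ itself is not a subsolution, and instead to use the Ecker localisation $\eta = (1 - (|x-x_{0}|^{2} + 2k(t-t_{0}))/r^{2})_{+}^{3}$. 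This $\eta$ satisfies $(\partial_{t} - S_{\natural}\bullet\D^{2})\eta \leq 0$ exactly, and the cross term is handled via $-2\D g\bullet S_{\natural}\D\eta \leq |S_{\natural}\D g|^{2}\eta/g\ldots$, which we bound using the good term $-2|S_{\natural}\D f|^{2}$ kept from Step 1. Here $|\D\eta|^{2}/\eta$ is bounded by $C\eta^{1/3}/r^{2}$.

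\textbf{Step 3 (integrating in time and taking the limit).} We integrate the inequality over $t \in (t_{0} - r^{2}/(4k) \cdot \theta,\, \tau)$ and let $\tau \uparrow t_{0}$. On the left, the Gaussian density limit gives $g(t_{0},x_{0})\Theta(V,t_{0},x_{0}) \geq f(t_{0},x_{0})^{2}$, using \ref{prop:properties of Gaussian density:3} and $(t_{0},x_{0}) \in \spt\|V\|$. At the initial times we obtain $\int g\eta\Phi\,d\mu(t)$ with $\Phi \leq C r^{-k}$. Averaging this initial time over an interval of length comparable to $r^{2}$, as in the proof of \ref{proppara: L1 to L infinity mass bound}, converts the slice integrals into $r^{-k-2}\int_{\mathbf P(t_{0},x_{0},r)} f^{2}\,d\|V\|$. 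The error terms from Step 2 are bounded in the same way, since they are supported where $\Phi \leq C r^{-k}$ and are integrated in time over a window of length $\lesssim r^{2}$. Collecting the constants gives $c f(t_{0},x_{0})^{2} \leq r^{-k-2}\int_{\mathbf P} f^{2}\,d\|V\|$ with $c = c(n,k)$.
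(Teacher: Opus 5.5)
The paper gives no proof of its own here (it only cites Ecker's book), so I am comparing against the standard argument. Your proposal has a genuine gap, and it sits in the localisation you finally commit to in Step~2.

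\textbf{The gap.} With Ecker's $\eta=\varphi^{3}$, $\varphi=\bigl(1-(|x-x_0|^2+2k(t-t_0))/r^2\bigr)_+$, one has $(\partial_t-S_\natural\bullet\D^2)\varphi=0$, hence $(\partial_t-S_\natural\bullet\D^2)\eta=-6\varphi|S_\natural\D\varphi|^2$. The good term you keep, $-2\eta|S_\natural\D f|^2$, absorbs the cross term $-4f\,S_\natural\D f\bullet\D\eta$ only at the cost of $2f^2|S_\natural\D\eta|^2/\eta=18f^2\varphi|S_\natural\D\varphi|^2$. The net result is
\[
(\partial_t-S_\natural\bullet\D^2)(f^2\eta)\le 12\,f^2\varphi\,|S_\natural\D\varphi|^2=48\,r^{-4}f^2\varphi\,|S_\natural(x-x_0)|^2 .
\]
This is a nonnegative error, and it does not vanish near $(t_0,x_0)$, since $\varphi(t_0,x_0)=1$. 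Your bound $|\D\eta|^2/\eta\le C\eta^{1/3}/r^2$ is correct but localises nothing, because $\eta^{1/3}=\varphi$.

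So the claim in Step~3, that the error terms live where $\Phi_{(t_0,x_0)}\le Cr^{-k}$, is false for this $\eta$. For instance, for the static plane through $x_0$, on $\{|x-x_0|^2=4(t_0-t)\}$ the weight $r^{-4}|S_\natural(x-x_0)|^2\Phi_{(t_0,x_0)}$ equals $c(k)\,r^{-4}(t_0-t)^{1-k/2}$. For $k\ge 3$ this is unbounded relative to $r^{-k-2}$ as $t\uparrow t_0$. The error integral therefore cannot be compared with $r^{-k-2}\int_{\mathbf P}f^2\,d\|V\|$. Bounding it needs a priori control of $f^2$ near $(t_0,x_0)$, which is exactly what you are trying to prove. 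Changing exponents does not help pointwise: with $\varphi^m$ and $f^p$, the net coefficient is $pm^2/(p-1)-m(m-1)>0$. Your time-averaging in Step~3 only handles the initial slice, not this term.

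\textbf{The fix.} What is missing is a cutoff that is identically $1$ on a neighbourhood of the centre, which Step~1 gestures at. The cutoff written there cannot have the stated support, though: $\chi$ decreasing gives $\eta(t,x_0)=\chi((t-t_0)/r^2)\ge 1$ for every $t\le t_0$.

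Use a product instead:
\[
\psi(t,x)=\chi_1(|x-x_0|/r)\,\chi_2((t_0-t)/r^2),
\]
with $\chi_1=1$ on $[0,1/2]$, $\chi_1=0$ on $[3/4,\infty)$, $\chi_2=1$ on $[0,1/4]$, $\chi_2=0$ on $[3/4,\infty)$. Test with $\phi=f^2\psi^2$. Exactly your Cauchy--Schwarz step then gives, for $t\le t_0$,
\[
(\partial_t-S_\natural\bullet\D^2)(f^2\psi^2)\le f^2\bigl(2\psi(\partial_t-S_\natural\bullet\D^2)\psi+6|S_\natural\D\psi|^2\bigr)\le C(n,k)\,r^{-2}f^2\,\mathbf{1}_{\mathbf P(t_0,x_0,r)\sim\mathbf P(t_0,x_0,r/2)} .
\]
Off $\mathbf P(t_0,x_0,r/2)$, with $t<t_0$, either $|x-x_0|\ge r/2$ or $t_0-t\ge r^2/4$, so $\Phi_{(t_0,x_0)}\le C(k)r^{-k}$ there.

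Since $\psi=0$ for $t\le t_0-3r^2/4$, integrate the monotonicity formula up to $\tau$, let $\tau\uparrow t_0$, and use the Gaussian density limit. This gives directly
\[
f(t_0,x_0)^2\,\Theta(V,t_0,x_0)\le C(n,k)\,r^{-k-2}\int_{\mathbf P(t_0,x_0,r)}f^2\,d\|V\|,
\]
with no averaging over initial slices. Then $\Theta\ge 1$ on $\spt\|V\|$ concludes.

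For $f\equiv 1$, the only case the paper uses later, there is no cross term, and your Ecker route does work.
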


\begin{proposition}\label{prop: density bounds}
Suppose $V$ is a $k$-dimensional space-time-Grassmann Brakke flow over $J \times \G_{k} (U)$, then the following holds: 
    \begin{enumerate}[label=\ref{prop: density bounds}.\arabic{enumi}]
        \item\label{prop: lower density bound} There exists a finite, positive constant $c$, dependent only on $n$ and $k$, such that for all $(t, x) \in \spt  \| V \|$,
        \begin{equation*}
            \Theta_{* \rho}^{k + 2} (\| V \|, t, x) \geq  c,
        \end{equation*}
        and thus $\mathscr{H}_{\rho}^{k + 2} \weight \mathrm{spt} \, \| V \| \leq c^{-1} \| V \|$.
        \item\label{prop: upper density bounds} Suppose $\Gamma$ is a non-negative, finite constant, such that $\| V \| (J \times U) \leq \Gamma <\infty$, and $J'$, and $U'$ are relatively compact subsets of $J$, and $U$, respectively, then there exists a non-negative, finite constant $C$, dependent only on $n$, $k$, $\Gamma$, $J'$, $J$, $U'$, and $U$, such that, 
        \begin{equation*}
            \Theta_{\rho}^{* k + 2} ( \| V \|, (t, x)) \leq C \ \quad \text{for $(t,x)\in J'\times U'$},
        \end{equation*}
        and thus $\| V \| \leq 2^{k + 2} C \mathscr{H}_{\rho}^{k + 2} \weight \mathrm{spt} \, \| V \|$, on $J' \times U'$.
    \end{enumerate}
\end{proposition}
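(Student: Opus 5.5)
The statement to prove is Proposition~\ref{prop: density bounds}. Both parts will come from comparing $\|V\|(\mathbf{B}_\rho((t,x),r))$ with $r^{k+2}$, using the mean value inequality \ref{prop: Brakke flow mean value inequality} for the lower bound and the $L^1$-to-$L^\infty$ mass bound \ref{proppara: L1 to L infinity mass bound} for the upper bound.

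\emph{Lower bound \ref{prop: lower density bound}.} Fix $(t,x)\in\spt\|V\|$ and $r>0$ small. I will apply \ref{prop: Brakke flow mean value inequality} with a function $f$ that is a subsolution along the flow and is comparable to $1$ near $(t,x)$. The natural candidate is a time-translated backward heat-type weight, or more simply
\begin{equation*}
 f(s,y)=\bigl(1-r^{-2}|y-x|^2-2k r^{-2}(t-s)\bigr)_+^{3},
\end{equation*}
smoothed if needed so that it is of class 2. The key check is the subsolution condition. For $g=r^{-2}|y-x|^2+2kr^{-2}(t-s)$ we have $\partial_s g=2kr^{-2}$ and $S_\natural\bullet \D^2 g=2kr^{-2}$, so $\partial_s g-S_\natural\bullet\D^2 g=0$ for every $S\in\mathbf G(n,k)$. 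Consequently $u=1-g$ satisfies $\partial_s u - S_\natural\bullet \D^2 u=0$. For the cube,
\begin{equation*}
 \partial_s f - S_\natural\bullet\D^2 f = -6u_+\,|S_\natural \nabla u|^2\le 0 .
\end{equation*}
The function $f$ need not have compact support in $J\times U$, but only its values on $\mathbf P(t,x,r)$ enter the inequality. If the precise hypothesis of \ref{prop: Brakke flow mean value inequality} requires more, I will multiply by a space-time cutoff, or rescale $r$ by $(2k+1)^{-1/2}$ so that $f$ vanishes on the parabolic boundary. Since $f(t,x)=1$ and $f\le1$, the inequality gives
\begin{equation*}
 c\le r^{-(k+2)}\|V\|(\mathbf P(t,x,r))\le r^{-(k+2)}\|V\|(\mathbf B_\rho((t,x),r)),
\end{equation*}
because $\mathbf P(t,x,r)\subset\mathbf B_\rho((t,x),r)$. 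Taking the liminf gives $\Theta^{k+2}_{*\rho}\ge c$.

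The comparison $\mathscr H^{k+2}_\rho\weight\spt\|V\|\le c^{-1}\|V\|$ then follows from the standard density theorem in the metric space $(\mathbf R\times\mathbf R^n,\rho)$, namely \cite[2.10.19(3)]{MR41:1976}: if $\Theta^{*m}_\rho(\mu,a)\ge\lambda$ on $A$, then $\lambda\mathscr H^m_\rho(A)\le\mu(A)$ under our normalisation $(\mathrm{diam}/2)^m$. Lower density is stronger than upper density, so this applies; it may cost a dimensional constant, which is absorbed into $c$.

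\emph{Upper bound \ref{prop: upper density bounds}.} Choose intermediate relatively compact $J'',U''$ with $J'\Subset J''\Subset J$ and $U'\Subset U''\Subset U$, and let $I=\overline{J''}$ and $K=\overline{U''}$. Proposition~\ref{proppara: L1 to L infinity mass bound} gives $\langle\|V\|^-,s\rangle(K)\le C(I,K)\Gamma$ for $s\in I$, but that alone only yields $\|V\|(\mathbf B_\rho)\lesssim r^2$, which is far from $r^{k+2}$. Getting the factor $r^k$ is the main obstacle. The plan is to use Euclidean-scale monotonicity from Theorem~\ref{prop:monotonicity formula} with a localizing cutoff $\phi$. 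That theorem bounds the Gaussian-weighted mass at times before the centre $(t_0,x_0)$ by the Gaussian-weighted mass at a fixed earlier time plus an error. The earlier-time mass is controlled by the $L^\infty$ mass bound, and the error by $\sup|\D^2\phi|$ times the same mass bound. Both depend only on $\Gamma$ and the sets.

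Concretely, I will take $(t_0,x_0)=(t+r^2,x)$. For $s\in[t-r^2,t+r^2]$ and $|y-x|\le r$, the kernel satisfies $\Phi_{(t_0,x_0)}(s,y)\ge c' r^{-k}$. The monotonicity bound then gives $\langle\|V\|^-,s\rangle(\mathbf B(x,r))\le C r^k$ for each such $s$. Integrating over an $s$-interval of length $2r^2$ yields $\|V\|(\mathbf B_\rho((t,x),r))\le C r^{k+2}$, which bounds $\Theta^{*k+2}_\rho$.

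The inequality $\|V\|\le 2^{k+2}C\,\mathscr H^{k+2}_\rho\weight\spt\|V\|$ on $J'\times U'$ follows from the companion density theorem \cite[2.10.18(1)]{MR41:1976}: $\mu(A)\le 2^m t\,\mathscr H^m(A)$ when the upper density is at most $t$ on $A$, applied to Borel $A\subset\spt\|V\|$. This is valid because $\|V\|$ vanishes off its support.
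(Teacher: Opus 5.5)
Your overall structure matches the paper's: the mean value inequality for the lower bound, localized monotonicity with a later-centred heat kernel for the upper bound, and 2.10.19 of Federer for the two comparisons. However, two concrete steps fail as written.

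\emph{Lower bound.} The subsolution check has a sign error. With $g=r^{-2}|y-x|^2+2kr^{-2}(t-s)$ one has $\partial_s g=-2kr^{-2}$, so $\partial_s u-S_\natural\bullet\D^2u=4kr^{-2}$ and
\[
\partial_s f-S_\natural\bullet\D^2 f=12kr^{-2}u_+^2-6u_+|S_\natural\nabla u|^2 .
\]
This equals $12kr^{-2}>0$ at $(t,x)$, so the hypothesis of \ref{prop: Brakke flow mean value inequality} fails. No function of the shape you wanted can work: for smooth flows the maximum principle forbids a subsolution that vanishes on the parabolic boundary of $\mathbf P(t,x,r)$ yet equals $1$ at its top centre. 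The fix is what the paper does. Take $f\equiv1$, which is admissible because no compact support is required, and the inequality gives $c\le r^{-k-2}\|V\|(\mathbf P(t,x,r))$ at once. Reversing the time sign so that $u$ is caloric also works, at the cost of $f\le(1+2k)^3$ rather than $f\le1$.

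\emph{Upper bound.} Centring the kernel at $(t+r^2,x)$ does not give $\Phi\ge c'r^{-k}$ on $\mathbf B_\rho((t,x),r)$. For $|y-x|=r$ and $\tau=t+r^2-s\downarrow0$ one has $\Phi=(4\pi\tau)^{-k/2}e^{-r^2/(4\tau)}\to0$. You must centre strictly later, e.g.\ at $(t+2r^2,x)$. Then $t+2r^2-s\in[r^2,3r^2]$ and $\Phi\ge(12\pi)^{-k/2}e^{-1/4}r^{-k}$ on the whole ball. This is why the paper centres at $(t_0+2s^2,x_0)$ when estimating $\mathbf B_\rho((t_0,x_0),s)$; there $s<r$ denotes the small radius. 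With that repair, and with $\phi\equiv1$ near $x$ so that $\Phi$ stays bounded on $\spt\D^2\phi$ uniformly in time, your route works.

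It is, however, heavier than the paper's. You bound an earlier slice via \ref{proppara: L1 to L infinity mass bound}, and you must pass from the distributional inequality of \ref{prop:monotonicity formula} to individual slices $\langle\|V\|^-,\cdot\rangle$. The paper avoids both steps:
\begin{enumerate}
\item It builds into $\phi$ a time cutoff vanishing for $t\le t_0-2r^2$, so no initial-slice term arises.
\item It chooses a piecewise linear $\omega$ with $-\omega'=1$ on $[t_0-s^2,t_0+s^2]$, so the left side is already $\int_{[t_0-s^2,t_0+s^2]\times U}\Phi\phi\,d\|V\|$.
\item The right side involves only $\partial_t\phi$ and $\D^2\phi$ on regions where $\Phi\lesssim r^{-k}$.
\end{enumerate}
This yields $\|V\|(\mathbf B_\rho((t_0,x_0),s))\le C(k)\,s^{k+2}r^{-k-2}\,\|V\|(\mathbf B_\rho((t_0,x_0),2r))$ directly, with neither the $L^1$-to-$L^\infty$ bound nor any slice-wise argument.
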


\begin{proof}
    The lower density bound follows directly from \ref{prop: Brakke flow mean value inequality} with $f = 1$.
    For the upper density bound, consider $\mathbf{B}_{\rho} (t_{0}, x_{0}, 2r) \subset J \times U$, $0 < s < r$, and a class 2 function $\eta \colon \mathbf{R} \rightarrow \mathbf{R}$, which satisfies $\eta (t) = 0$, for $-\infty < t \leq 0$, $\eta (t) = 1$, for $1 \leq t < + \infty$, and $0 \leq \eta' (t) \leq 2$, and $|\eta'' (t)| \leq 8$, for all $t \in \mathbf{R}$. 
    We then define, 
    \begin{equation*}
        \phi (t, x) = \eta (r^{-2} (t - t_{0} + 2 r^{2})) \eta (r^{-2} (t_{0} + 2 r^{2} - t))\eta (r^{-1} (2 r - | x - x_{0}|)),
    \end{equation*}
    and, 
    \begin{equation*}
        \omega (t) = \begin{cases}
            s^{2}r^{-2} (t - (t_{0} - 4r^{2})), & t \in [t_{0} - 4r^{2}, t_{0} - 2 r^{2}], \\
            2 s^{2}, & t \in [t_{0} - 2 r^{2}, t_{0} - s^{2}], \\
            t_{0} + s^{2} - t, & t \in [t_{0} - s^{2}, t_{0} + s^{2}], \\ 
            0, & t \leq t_{0} - 4r^{2}, \, t \geq t_{0} + s^{2}.
        \end{cases}
    \end{equation*}
    The upper bound then follows from \ref{prop:monotonicity formula}, with $\phi (t, x)$ as above, an appropriate class 2 approximation of $\omega (t)$, and centreing the heat kernel at $(t_{0} + 2 s^{2}, x_{0})$.
    The further statements then follow from \cite[2.10.19]{MR41:1976}
\end{proof}

\begin{propparagraph}\label{proppara: a.c. of weight measure and hausdorff measure for Brakke flows}
    From \ref{prop: density bounds} $\mathscr{H}^{k + 2}_{\rho} \weight \mathrm{spt} \| V \|$, (and equivalently $\mathscr{H}^{k + 2}_{d} \weight \mathrm{spt} \| V \|$) are Radon measures over $J \times U$.
    Moreover, $\| V \|$ and $\mathscr{H}_{\rho}^{k + 2} \weight \mathrm{spt} \, \| V \|$ (and equivalently $\mathscr{H}^{k + 2}_{d} \weight \mathrm{spt} \| V \|$), are absolutely continuous with respect to eachother.
\end{propparagraph}

\section{Convergence of Brakke Flows}\label{sec: convergence of Brakke Flows}

\begin{definition}
    For a Radon measure $V$ over $J \times \mathbf{G}_{k} (U)$, we define
    \begin{equation*}
      \textstyle  \delta V (g) = \int S  \bullet \D_{z} g (t, z)|_{x} \  d V_{(t, x, S)} \ \text{for $g\in \mathscr D(J\times U,\mathbf R^n)$}.
    \end{equation*}
\end{definition}

\begin{proposition}\label{prop: first variation is a space-time Radon measure}
    Let $V$ be a space-time-Grassmann Brakke flow on $J \times \mathbf{G}_{k} (U)$, then $\| \delta V \|$ is a Radon measure over $J \times U$. 
\end{proposition}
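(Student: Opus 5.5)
To show that $\| \delta V \|$ is a Radon measure over $J \times U$, I will prove that $\delta V$ is a continuous linear functional on $\mathscr{D}(J \times U, \mathbf{R}^{n})$ with respect to the sup norm, locally. That is, for every compact $K \subset J \times U$ there should be a constant $C_K$ with
\begin{equation*}
    |\delta V (g)| \leq C_K \sup |g| \quad \text{for $g \in \mathscr{D}(J\times U, \mathbf{R}^n)$ with $\spt g \subset K$.}
\end{equation*}
The Riesz representation theorem then yields the total variation Radon measure $\| \delta V \|$. We may assume $K \subset I \times K'$ with $I \subset J$ a compact interval and $K' \subset U$ compact.

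\textbf{Step 1: slice.} By Definition \ref{def: space-time-Grassmann Brakke flow}, for $\mathscr{L}^1$ almost all $t$ we have $\langle V, t\rangle \in \mathbf{IV}_k(U)$ with $\mathscr{B}(\|\langle V,t\rangle\|,\phi) > -\infty$. Here $\phi$ is a fixed non-negative class 2 cutoff with $\phi = 1$ on a neighbourhood of $K'$. By Definition \ref{Ilmanen-def}, for such $t$ the first variation of $\langle V,t\rangle$ on $U_\phi$ is absolutely continuous with generalized mean curvature $\mathbf{h}(\langle V, t\rangle, \cdot)$. Using the disintegration in \ref{Space time Brakke flow:0} and the following item of Theorem \ref{thm:Space time Brakke flow}, applied to the product functions first and then by density to general $g$, I expect
\begin{equation*}
    \delta V(g) = \int \delta \langle V, t\rangle (g(t,\cdot)) \, d\mathscr{L}^1_t = -\int \int g(t,x)\bullet \mathbf{h}(\langle V,t\rangle,x)\, d\|\langle V,t\rangle\|_x \, d\mathscr{L}^1_t .
\end{equation*}

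\textbf{Step 2: the $L^2$ bound.} By Cauchy--Schwarz,
\begin{equation*}
    |\delta V(g)| \leq \sup|g| \, \| V \|(K)^{1/2} \Big( \int_{I} \int \phi |\mathbf{h}|^2 \, d\|\langle V,t\rangle\| \, d\mathscr{L}^1_t \Big)^{1/2}.
\end{equation*}
It remains to show $\int_I\int \phi|\mathbf{h}|^2 \, d\|V\| < \infty$. For this I apply \ref{proppara: extended Brakke inequality} with the space-time test function $\omega(t)\phi(x)$, where $\omega \in \mathscr{D}(J)$ is non-negative and $\omega = 1$ on $I$. Absorbing the gradient term via $|\grad\phi \bullet \mathbf{h}| \leq \tfrac12 \phi|\mathbf{h}|^2 + \tfrac12 |\grad\phi|^2/\phi$ gives
\begin{equation*}
    \tfrac12 \int \omega\phi|\mathbf{h}|^2 \, d\|V\| \leq \int |\omega'|\phi \, d\|V\| + \tfrac12 \int \omega \tfrac{|\grad\phi|^2}{\phi}\, d\|V\|.
\end{equation*}
The right-hand side is finite, because $|\grad\phi|^2 / \phi \leq 2 \sup|\D^2\phi|$ for non-negative class 2 functions and $\|V\|$ is Radon.

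The \textbf{main obstacle} is making Step 2 rigorous. The term $\int \phi|\mathbf{h}|^2$ might a priori be infinite, so it cannot simply be moved to the other side. To handle this, I will instead use directly the local integrability $\mathscr{B}(\|\langle V,\cdot\rangle\|,\phi)\in \mathbf{L}_1^{\rm loc}(J)$ from Definition \ref{def: space-time-Grassmann Brakke flow}. Pointwise in $t$, the elementary inequality above gives
\begin{equation*}
    \tfrac12\int\phi|\mathbf{h}|^2 \, d\|\langle V,t\rangle\| \leq -\mathscr{B}(\|\langle V,t\rangle\|,\phi) + C\,\|\langle V,t\rangle\|(\spt\phi).
\end{equation*}
Integrating this over $I$ is finite, since the first term is $\mathbf{L}_1^{\rm loc}$ and the second is bounded by $\|V\|(I\times\spt\phi)$ after integration. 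This avoids any subtraction of infinities, and Step 1's identity only needs to be justified for a.e. slice, which the integrality from Definition \ref{def: space-time-Grassmann Brakke flow} together with Definition \ref{Ilmanen-def} provides.
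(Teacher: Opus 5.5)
Your proposal is correct and is essentially the paper's proof: write $\delta V(g) = -\int g\bullet\mathbf{h}(\langle V,t\rangle,x)\,d\|V\|_{(t,x)}$ through the slices, bound $\int_{I\times K}|\mathbf{h}|^2\,d\|V\|$ locally, and conclude by Cauchy--Schwarz. The paper gets the $L^2$ bound as in your first sketch, from \ref{proppara: extended Brakke inequality} with absorption, giving $C(I,K)\,\|V\|(J\times U)$ uniformly in $V$; that uniform bound is reused in \ref{prop: convergence of Brakke flows}, whereas your fallback via $\mathscr{B}\in\mathbf{L}_1^{\rm loc}(J)$ is also valid but not uniform in $V$, and your worry about infinities there is moot because the integrand $-\phi|\mathbf{h}|^2+\grad\phi\bullet\mathbf{h}$ is at most $-\tfrac12\phi|\mathbf{h}|^2+\tfrac12|\grad\phi|^2/\phi$, so a finite left-hand side already forces $\int\phi|\mathbf{h}|^2\,d\|V\|<\infty$.
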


\begin{proof}
    We may assume that $\| V \| ( J \times U) < + \infty$.
    For $\mathscr{L}^{1}$ almost all $t \in J$, 
    \begin{equation*}
        \textstyle \int S \bullet D_{y} g (t, y)|_{x} \, d \langle V, t \rangle_{(x, S)} = - \int g (t, x) \bullet \mathbf{h} (\langle V, t \rangle, x) \, d \| V \|_{x}.
    \end{equation*}
    and thus by \cite[4.1.2, Theorem C]{LW-Space-Time-GrassmannMeasureBrakkeFlow} we have that, 
    \begin{equation*}
        \textstyle \delta V (g) = - \int g (t, x) \bullet \mathbf{h} (\langle V ,t \rangle, x) \, d \| V \|_{(t, x)}.
    \end{equation*}
    Then for $I \subset J$, and $K \subset U$, both compact, by \ref{proppara: extended Brakke inequality}, with an appropriate choice of test function $\phi$, we obtain that there exists a finite, non-negative constant $C = C (I, K)$, such that
    \begin{equation*}
       \textstyle  \int_{I \times K} |\mathbf{h} (\langle V, t \rangle, x)|^{2} \, d \| V \|_{(t, x)} \leq C (I, K) \| V \| (J \times U).\qedhere
     \end{equation*}
\end{proof}

\begin{propparagraph}\label{proppar:rectifiablity of Brakke flow}
    Suppose $V$ is a space-time-Grassmann Brakke flow over $J \times U$, and $W$ is a Radon measure over $J \times \mathbf{G}_{k} (U)$, such that $\| V \| = \| W \|$, and $\| \delta W \|$ is a Radon measure over $J \times U$, then $V = W$. 
    Indeed, for $\alpha \in \mathscr{K} (\mathbf{G}_{k} (U))$, we define, 
    \begin{equation*}
        \textstyle R_{\alpha} (\omega) = \int \omega (t) \, \alpha (x, S) \, d W_{(t, x, S)} \ \text{for $\omega \in \mathscr D(J)$}.
    \end{equation*}
    Taking $\phi \in \mathscr{K} (U)$, such that $|\alpha (x, S)| \leq \phi (x)$ for all $x \in U$, we have that, 
    \begin{equation*}
        \textstyle |R_{\alpha} (\omega)| \leq R_{\phi} (\omega) = \int \omega (t) \| \langle V, t \rangle \| (\phi) \, d \mathscr{L}^{1}_{t},
    \end{equation*}
    which implies that $\| R_{\alpha} \|$ is absolutely continuous with respect to $\mathscr{L}^{1}$, and thus (see \cite[3.4(2)]{Menne-WDFV}),
    \begin{equation*}
        \textstyle \int \omega (t) \alpha (x, S) \, d W (t, x, S) = \int \omega (t) \langle W, t \rangle (\alpha) \, d \mathscr{L}^{1}_{t},  
    \end{equation*}
    with $\| \langle W, t \rangle \| = \| \langle V, t \rangle \|$ for $\mathscr{L}^{1}$ almost all $t \in J$.
    We therefore have that
    \begin{equation*}
        \textstyle \delta W_{(t, x)} (\omega (t) X (x)) = \int \omega (t) \delta \langle W, t\rangle (X) \, d \mathscr{L}^{1}_{t}.
    \end{equation*}
    which combined with the fact that $\| \delta W \|$ is a Radon measure over $J \times U$, implies (see \cite[3.4]{Menne-WDFV}) that for $\mathscr{L}^{1}$ almost all $t \in J$, $\| \delta \langle W, t \rangle \|$ is a Radon measure over $U$. 
    Thus, as $\langle V, t \rangle \in \mathbf{IV}_{k} (U)$ for $\mathscr{L}^{1}$ almost all $t$, by \cite[5.5(1)]{Allard1972OnTF}, for $\mathscr{L}^{1}$ almost all $t$, $\langle W, t \rangle = \langle V, t \rangle$.
\end{propparagraph}

\begin{lemma}\label{convergence of monotone BV}
    Consider $f,f_i : J \to \mathbf R$, and suppose that the $f_{i}$ are monotone functions, such that
    \[\textstyle  \lim\limits_{i\to\infty}\int |f-f_i| \ d \mathscr L^1 = 0.\]
    Then for every  $t$ at which $f$ is continuous, $f(t) = \lim\limits_{i\to\infty}f_i(t)$. 
\end{lemma}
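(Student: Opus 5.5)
Fix $t \in J$ at which $f$ is continuous, and argue by sandwiching averages of $f_i$ near $t$ between values of $f_i$ at points close to $t$. Monotonicity of each $f_i$ is what makes this work. Passing to the subsequence and using $-f_i$ for the non-increasing ones, we may assume without loss of generality that every $f_i$ is non-decreasing. More precisely: it suffices to show that every subsequence has a further subsequence along which $f_i(t) \to f(t)$. Along any subsequence, infinitely many $f_i$ are monotone in the same direction, so restrict to those. For non-increasing $f_i$, replace $f_i, f$ by $-f_i, -f$; the hypothesis and the conclusion are invariant under this.

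Now let $\varepsilon > 0$. By continuity of $f$ at $t$, choose $\delta > 0$ such that $[t-\delta, t+\delta] \subset J$ and $|f(s) - f(t)| \le \varepsilon$ for $|s-t| \le \delta$. For each $i$, monotonicity gives
\begin{equation*}
  \frac{1}{\delta}\int_{t-\delta}^{t} f_i \, d\mathscr{L}^1 \;\le\; f_i(t) \;\le\; \frac{1}{\delta}\int_{t}^{t+\delta} f_i \, d\mathscr{L}^1 .
\end{equation*}
The key point is that pointwise values of a monotone function are controlled by one-sided averages. By the $\mathbf{L}_1$ hypothesis,
\begin{equation*}
  \Bigl| \frac{1}{\delta}\int_{t}^{t+\delta} (f_i - f)\, d\mathscr{L}^1 \Bigr| \le \delta^{-1} \textstyle\int |f - f_i| \, d\mathscr{L}^1 \to 0,
\end{equation*}
and similarly on $[t-\delta, t]$. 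The averages of $f$ over these intervals lie within $\varepsilon$ of $f(t)$. Hence
\begin{equation*}
  f(t) - \varepsilon \le \liminf_{i} f_i(t) \le \limsup_{i} f_i(t) \le f(t) + \varepsilon .
\end{equation*}
Letting $\varepsilon \downarrow 0$ gives the claim along the subsequence, and hence for the full sequence by the subsequence principle.

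The only point needing care is the reduction to a common monotonicity direction, which the subsequence argument handles. A related concern is whether $f$ is integrable near $t$. This follows because $f = f_i + (f - f_i)$ and monotone functions are locally bounded on compact subintervals of $J$. Since $f$ is continuous at $t$, it is in any case bounded on $[t-\delta, t+\delta]$ after shrinking $\delta$, so no measurability issues arise beyond what the hypothesis already assumes.
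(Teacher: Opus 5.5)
Your proof is correct and is essentially the paper's argument: both use monotonicity to trap $f_i(t)$ between quantities taken on either side of $t$ within distance $\delta$. The paper uses single points $y_i < t < z_i$ at which $|f_i - f| \le \varepsilon$ (these exist because $L^1$ convergence eventually makes $\mathscr{L}^1(\{|f_i - f| > \varepsilon\}) < \delta$), whereas you use one-sided averages. Your subsequence reduction to non-decreasing $f_i$ is harmless but unnecessary: for either direction of monotonicity, $f_i(t)$ lies between the two one-sided quantities, and this is how the paper avoids the reduction.
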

\begin{proof}
    Taking $t \in J$ such that $f$ is continuous at $t$, then for each $0<\varepsilon<\infty$, there exists $0<\delta<\infty$ and 
    $n_0\in \mathscr P$ such that 
    \[ |f(t)-f(s)| < \varepsilon \quad \text{if $|t-s|<\delta$},\]
    \[ \mathscr L^1(J \cap \{ s : |f_i(s)-f(s)|>\varepsilon\}) < \delta \quad \text{for $i\geq n_0$}.\]
    For every positive integer $i\geq n_0$, we may
    choose $y_i,z_i$ such that 
    \[ t-\delta < y_i < t < z_i < t + \delta, \quad f_i(y_i),f_i(z_i) \in \mathbf B(f(t),2 \varepsilon),\]
    the monotonicity implies $f_i(t)\in \mathbf B(f(t),2\varepsilon)$.
\end{proof}

\begin{proposition}[Convergence of Brakke Flows]\label{prop: convergence of Brakke flows} 
    Let $V_{1}, \, V_{2}, \, \ldots$, and $V$ be space-time-Grassmann Brakke flows on $J \times U$, then the following three statements are equivalent: 
    \begin{enumerate}[label=\ref{prop: convergence of Brakke flows}.\arabic{enumi}]
         \item \label{prop: convergence of Brakke flows:1} $V = \lim\limits_{i\to\infty} V_i$. 
        \item \label{prop: convergence of Brakke flows:2} $\|V\| = \lim\limits_{i\to\infty} \|V_i\|$.
        \item \label{prop: convergence of Brakke flows:3} 
        There exists a countable set $D \subset J$ such that for every representative $\mu_i$ of $V_{i}$, and every $t\in J \sim D$,
        \[
            \| \langle V, t \rangle \| = \lim\limits_{i\to\infty} \mu_i(t).
        \]
    \end{enumerate}
\end{proposition}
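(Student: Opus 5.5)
We prove $1\Rightarrow 2\Rightarrow 3\Rightarrow 2\Rightarrow 1$.

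\emph{1 $\Rightarrow$ 2.} This is immediate. For $f\in\mathscr K(J\times U)$ the function $(t,x,S)\mapsto f(t,x)$ lies in $\mathscr K(J\times\mathbf G_k(U))$, because $\mathbf G(n,k)$ is compact.

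\emph{2 $\Rightarrow$ 1.} This uses compactness together with the uniqueness statement \ref{proppar:rectifiablity of Brakke flow}.
\begin{itemize}
\item Since $\|V_i\|\to\|V\|$, the masses $V_i(K\times\mathbf G(n,k))$ are locally bounded. So every subsequence has a further subsequence with $V_i\to W$ for some Radon measure $W$, and then $\|W\|=\|V\|$.
\item We need $\|\delta W\|$ to be Radon. The proof of \ref{prop: first variation is a space-time Radon measure} gives $\delta V_i(g)=-\int g\bullet\mathbf h\,d\|V_i\|$ and
\[
\int_{I\times K}|\mathbf h(\langle V_i,t\rangle,x)|^2\,d\|V_i\|\le C(I,K)\,\|V_i\|(J'\times U'),
\]
with $J'\times U'$ a relatively compact neighbourhood of $I\times K$ (localising the constant). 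The right side is bounded uniformly in $i$.
\item By Cauchy--Schwarz, $|\delta V_i(g)|\le \sup|g|\,(C\,\|V_i\|(I\times K))^{1/2}\cdots$, uniformly in $i$. Since $\delta V_i(g)\to\delta W(g)$ for each smooth $g$, it follows that $\|\delta W\|$ is Radon.
\item \ref{proppar:rectifiablity of Brakke flow} then gives $W=V$. Uniqueness of the limit yields $V_i\to V$.
\end{itemize}

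\emph{2 $\Rightarrow$ 3.} Fix a countable family $\phi_j$ of nonnegative class 2 compactly supported functions that is dense, in a suitable sense, in nonnegative $\mathscr K(U)$.
\begin{itemize}
\item By the Brakke inequality with the local $L^2$ bound on $\mathbf h$ above, $t\mapsto \mu_i(t)(\phi_j)-C_j t$ is nonincreasing. Here $C_j$ depends only on $\|\mathrm{D}^2\phi_j\|$ and a uniform local mass bound. That mass bound comes from \ref{proppara: L1 to L infinity mass bound} together with $\sup_i\|V_i\|(J'\times U')<\infty$.
\item So $f_i(t)=\mu_i(t)(\phi_j)-C_jt$ are monotone. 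Convergence $\|V_i\|\to\|V\|$ in $L^1_{loc}$ of these slices needs justification: uniform BV bounds plus weak convergence of $\int\omega(t)\mu_i(t)(\phi_j)\,dt$ give $L^1_{loc}$ convergence via Helly's theorem along subsequences. The limit is identified as $\langle\|V\|,\cdot\rangle(\phi_j)$, so the whole sequence converges.
\item Let $D$ be the union over $j$ of the discontinuity sets of $t\mapsto \langle\|V\|^{\pm},t\rangle(\phi_j)$. These functions are BV, so $D$ is countable.
\item At $t\notin D$, $\|\langle V,t\rangle\|$ is defined as the common value of $\langle\|V\|^{\pm},t\rangle$. \ref{convergence of monotone BV} (applied locally in time) gives $\mu_i(t)(\phi_j)\to\|\langle V,t\rangle\|(\phi_j)$.
\item This holds for every representative $\mu_i$, since any representative satisfies the $C_j$-monotonicity, by \ref{Space time Brakke flow:4}.
\item Local mass bounds and density of $\{\phi_j\}$ upgrade this to weak convergence of Radon measures.
\end{itemize}

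\emph{3 $\Rightarrow$ 2.} For $\theta\in\mathscr K(J\times U)$ we have $\|V_i\|(\theta)=\int\mu_i(t)(\theta(t,\cdot))\,dt$. The integrand converges for $\mathscr L^1$ almost every $t$, since $D$ is countable. Dominated convergence then needs a uniform bound on $\mu_i(t)(K)$ for $t$ in a compact interval.

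This bound is the main obstacle. We plan to get it by contradiction from 3 itself plus Brakke's local almost-monotonicity:
\begin{itemize}
\item $\mu_i(t)(\phi)\le\mu_i(s)(\phi)+C(t-s)$ for $s<t$, where $C$ involves $\sup\phi$ and $\|\mathrm{D}^2\phi\|$ only, since $\mathscr B(\mu,\phi)\le c\int|\mathrm{D}\phi|^2/\phi$ is bounded by a multiple of $\mu(\spt\phi)$.
\item Pick $s\notin D$ slightly below the interval, where $\mu_i(s)(\phi)$ converges and is hence bounded.
\item Estimating the growth term requires controlling $\mu_i$ on the support. We handle this with the standard choice $\phi=\eta^4$, whose quantity $|\mathrm{D}\phi|^2/\phi\le C\phi^{1/2}\le C$ yields a Gronwall-type bound using only $\mu_i(s)(\spt\phi)$ on a slightly larger set. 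We bound that by convergence at $s$ tested against a larger cutoff.
\end{itemize}
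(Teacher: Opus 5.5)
Your $1\Rightarrow2$, $2\Rightarrow1$ and $2\Rightarrow3$ agree with the paper. Your $2\Rightarrow1$ is the same compactness argument, using the uniform local bound on $\|\delta V_i\|$ and the uniqueness statement \ref{proppar:rectifiablity of Brakke flow}. Your $2\Rightarrow3$ is the paper's $1\Rightarrow3$: uniform slice bounds from Proposition \ref{proppara: L1 to L infinity mass bound}, monotone $f_i$, $L^1_{\mathrm{loc}}$ convergence by BV compactness, then Lemma \ref{convergence of monotone BV}. That argument indeed only uses convergence of the weights.

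The gap is in $3\Rightarrow2$, which the paper simply calls clear. You rightly observe that dominated convergence needs $\sup_i\sup_{t\in I}\mu_i(t)(K)<\infty$, but the mechanism you propose does not produce this bound. Brakke's inequality gives $\overline{\D}_s\mu_i(s)(\phi)|_t\le\mathscr B(\mu_i(t),\phi)\le\int|\mathrm D\phi|^2(4\phi)^{-1}\,d\mu_i(t)$. For $\phi=\eta^4$ the right side is $4\int\eta^2|\mathrm D\eta|^2\,d\mu_i(t)$. This is controlled by the mass of $\mu_i(t)$ on all of $\spt\eta$ at the \emph{current} time, not by $\mu_i(t)(\phi)$ and not by data at time $s$. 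So no closed Gronwall inequality arises. Bounding $\mu_i(t)(\spt\eta)$ with a larger cutoff only pushes the same problem outward. For the same reason, the constant in your first bullet cannot depend on $\phi$ alone.

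The missing ingredient is a cutoff whose support shrinks in time, e.g. $\phi(t,x)=\bigl(1-(|x-x_0|^2+2k(t-T))(2r)^{-2}\bigr)_+^3$ with $\mathbf B(x_0,2r)\subset U$. It satisfies $\partial_t\phi-S_\natural\bullet\mathrm D^2\phi\le0$, so $t\mapsto\mu(t)(\phi(t,\cdot))$ is nonincreasing for $t\ge T$ along any Brakke flow. This is the local mass estimate already quoted in the proof of Proposition \ref{proppara: L1 to L infinity mass bound}:
$\sup\{\mu(t')(\mathbf B(x,r)) : T\le t'\le T+r^2(2k)^{-1}\}\le16\,\mu(T)(\mathbf B(x,2r))$.
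You can apply it, e.g., to the representative $\mu_i=\langle\|V_i\|^-,\cdot\rangle$, which is admissible since 3 is assumed for every representative.

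With this estimate your plan closes:
\begin{enumerate}
\item Since $J\sim D$ is dense, choose $T\in J\sim D$ just below each short subinterval of $I$. There $\sup_i\mu_i(T)(\mathbf B(x,2r))<\infty$, because $\mu_i(T)$ converges.
\item Cover $K$ by finitely many balls and $I$ by finitely many intervals of length at most $r^2(2k)^{-1}$, exactly as in that proof. This gives the uniform bound.
\item Dominated convergence in $\|V_i\|(\theta)=\int\mu_i(t)(\theta(t,\cdot))\,d\mathscr L^1_t$ then gives 2.
\end{enumerate}
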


\begin{proof}
    Clearly \ref{prop: convergence of Brakke flows:3} implies \ref{prop: convergence of Brakke flows:2}.
    We also have that \ref{prop: convergence of Brakke flows:2} implies \ref{prop: convergence of Brakke flows:1}. 
    Indeed, we first assume that $\sup_{i} \| V_{i} \| (J \times U) < + \infty$, and then for $\{V'_i\}$ a subsequence of $\{ V_i\}$, we may assume that there exists a Radon $W$ over $J \times \G_k(U)$ such that $V'_i \to W$ as $i\to\infty$.
    Moreover, as in the proof of \ref{prop: first variation is a space-time Radon measure}, for $I \subset J$, and $K \subset U$ compact, there exists a non-negative constant $C (I, K)$, such that, 
    \begin{equation*}
        \textstyle \sup_{i} \| \delta V_{i}'\| (I \times K) \leq C (I, K) \sup_{i} \| V_{i}' \| (J \times U) < + \infty.
    \end{equation*}
    Therefore $\| \delta W \|$ is a Radon measure over $J \times U$, and thus by \ref{proppar:rectifiablity of Brakke flow} $W = V$.
    
    Lastly we show that \ref{prop: convergence of Brakke flows:1} implies \ref{prop: convergence of Brakke flows:3}.
    Let $\mu_i$ and $\mu$ be representatives of $V_i$ and $V$ respectively, and $\phi : U \to \mathbf R$ a non-negative class $2$ function with compact support.
    We will prove the statement with the fixed function $\phi$, noting that the complete statement follows from taking an appropriate countable dense subset of $\mathscr{K}_{c} (U)$.
    We first show that there exists an at most countable subset $D$ of $J$ such that for all $t \in J \sim D$, 
    \begin{equation*}
        \textstyle \lim_{i \rightarrow \infty} \mu_{i} (t) (\phi) = \mu (t) (\phi).
    \end{equation*}
    Define, $g_i,g : J \to \mathbf R$ by 
    \[ g_i(t) = \mu_i(t)(\phi) \quad \text{for $t\in J$ and $i\in\mathscr P$},\]
    \[ g(t) = \mu(t)(\phi) \quad \text{for $t\in J$ }.\]
    After potentially considering a relatively compact subset of $J$, we may assume that $J$ is bounded, and by \ref{proppara: L1 to L infinity mass bound}, that $\sup_{i} \sup_{t \in J} \mu_{i} (t)(U_{\phi}) < + \infty$. 
    Thus by \cite[3.10.2]{LW-Space-Time-GrassmannMeasureBrakkeFlow}, and \cite[A.2]{Lahiri2017EqualityOT}, there exists a constant $0<L<\infty$, such that 
    \[ f_i(t) = g_i(t) - L \cdot t \quad \text{for $t\in J$ and $i\in\mathscr P$},\]
    \[ f(t) = g(t) - L \cdot t \quad \text{for $t\in J$ },\]
    are nonincreasing.
    This implies that there exists an at most countable set $D$, such that $f$ is continuous at points $t \in J \sim D$.
    The conclusion of our above claim then follows from \ref{convergence of monotone BV}, if we can show that, 
    \begin{equation}\label{eq:L_1 convergence}
        \textstyle \lim\limits_{i \rightarrow 0} \int_{I} |f_{i} (t) - f(t)| \, d \mathscr{L}^{1}_{t} = 0, \quad \text{for all compact $I \subset J$.}
    \end{equation}
    As $V_{i} \rightarrow V$, we have that for all $\omega \in \mathscr{D} (J)$,
    \begin{equation}\label{eq:weak convergece of f_i}
        \textstyle \lim\limits_{i \rightarrow \infty} \int \omega (t) f_{i} (t) \, d \mathscr{L}^{1}_{t} = \int \omega (t) f (t) \, d \mathscr{L}^{1}_{t}.
    \end{equation}
    Moreover, for $I$, a relatively compact subset of $J$, there exists a finite, non-negative constant $C (I)$, such that for non-negative $\omega \in \mathscr{D} (J)$, with $\mathrm{spt} \, \omega \subset I$,  we have that
    \begin{equation*}
        0 \leq \textstyle \int \omega' (t) f_i (t) d \mathscr L^1_t \leq C(I) \sup_{t} \omega (t)\int_{J} [\sup_{t} \mu_{i} (t)(\phi) + L t] \ d \mathscr L^1_t.
    \end{equation*}
    Thus $f_{1}, \, f_{2}, \ldots$ are uniformly bounded in the space of functions of locally bounded variation. 
    One then verifies (\ref{eq:L_1 convergence}) via a contradiction argument using (\ref{eq:weak convergece of f_i}) and \cite[3.23]{ambrosio2000functions}, hence verifying our initial claim.
    We finally note that from \ref{Space time Brakke flow:2}, \ref{Space time Brakke flow:3}, and \ref{Space time Brakke flow:4}, we may take
    \begin{equation*}
        D = \{ t \in J \, \colon \| \langle V^{-}, t \rangle \| (\phi) \not= \| \langle V^{+}, t \rangle \| (\phi) \},
    \end{equation*}
    and thus for $t \in J \sim D$, $\mu (t) (\phi) = \| \langle V, t \rangle \| (\phi)$, concluding that \ref{prop: convergence of Brakke flows:1} implies \ref{prop: convergence of Brakke flows:3}.
\end{proof}

\begin{definition}
    For space-time-Grassmann Brakke flows $V_{1}, \, V_{2}, \, \ldots$, and $V$, we say that $V_{1}, \, V_{2}, \, \ldots$ converge as Brakke flows to $V$, if any of the three equivalent statements from \ref{prop: convergence of Brakke flows} hold.
\end{definition}

\begin{theorem}[Compactness of Brakke Flows, \protect{\cite[7.1]{Ilmanen-EllipticRegularization}}, \protect{\cite[3.7]{tonegawa2019brakke}}]\label{thm:compactness of Brakke flows}
    Let $V_{1}, \, V_{2}, \, \ldots$ be a sequence of space-time-Grassmann Brakke flows such that for all compact $I \subset J$, and $K \subset U$, we have that, 
    \begin{equation*}
        \limsup_{i \rightarrow \infty} \| V_{i} \| (I \times K) < + \infty,
    \end{equation*}
    then there exists a subsequence $V_{i_{1}}, \, V_{i_{2}}, \, \ldots$, and a space-time-Grassmann Brakke flow $V$ over $J \times \mathbf{G}_{k} (U)$, such that $V_{i_{1}}, \, V_{i_{2}}, \, \ldots$ converge as Brakke flows to $V$.
\end{theorem}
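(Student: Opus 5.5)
The plan is to combine a diagonal compactness argument at the level of the space-time-Grassmann measures with the equivalence in \ref{prop: convergence of Brakke flows}. Since $J \times U$ is $\sigma$-compact, exhaust it by compacts $I_m \times K_m$. By hypothesis, $\limsup_i \| V_i \| (I_m \times K_m) < +\infty$ for every $m$. Because $\mathbf{G}(n,k)$ is compact, this gives local mass bounds for the $V_i$ on $J \times \mathbf{G}_k(U)$. Weak$^*$ compactness of Radon measures, together with a diagonal argument, then yields a subsequence $V_{i_j} \to W$ for some Radon measure $W$ over $J \times \mathbf{G}_k(U)$. In particular $\| V_{i_j} \| \to \| W \|$.

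The main work is to show that $W$ is a space-time-Grassmann Brakke flow. Two ingredients are needed.

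First, a uniform local $L^2$ bound on mean curvature, as in the proof of \ref{prop: first variation is a space-time Radon measure}. Apply \ref{proppara: extended Brakke inequality} with a cutoff $\phi$ supported in a slightly larger compact set. This gives
\begin{equation*}
\textstyle \sup_j \int_{I \times K} |\mathbf{h}(\langle V_{i_j}, t\rangle, x)|^2 \, d\| V_{i_j}\| < \infty .
\end{equation*}
Consequently $\|\delta V_{i_j}\|$ is locally uniformly bounded, and hence $\|\delta W\|$ is Radon.

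Second, integrality of the slices. By Fatou, for $\mathscr L^1$ almost every $t$,
\begin{equation*}
\textstyle \liminf_j \int_K \bigl(1 + |\mathbf h(\langle V_{i_j},t\rangle,\cdot)|^2\bigr)\, d\|\langle V_{i_j},t\rangle\| < \infty .
\end{equation*}
Allard's integral compactness theorem then gives integral limits of the slices along further subsequences, which may depend on $t$. To identify these limits with $\langle W,t\rangle$, I would follow the monotone-function argument in the proof of \ref{prop: convergence of Brakke flows}. For a fixed $\phi$, the functions $t \mapsto \|\langle V_{i_j},t\rangle\|(\phi) - Lt$ are nonincreasing and uniformly bounded by \ref{proppara: L1 to L infinity mass bound}. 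By Helly selection (diagonalising over a countable dense set of $\phi$) they converge pointwise for every $t$. They also converge in $L^1_{\mathrm{loc}}$, by \ref{convergence of monotone BV} and the weak convergence. Hence, for almost every $t$, the weights of the slices converge to $\|\langle W,t\rangle\|$. The Allard limits are integral varifolds with weight $\|\langle W,t\rangle\|$ and bounded first variation. Arguing as in \ref{proppar:rectifiablity of Brakke flow}, I would then show $\langle W,t\rangle$ is integral. This step is where I expect the main obstacle: the integral limit obtained along a $t$-dependent subsequence has to be matched with the slice of the space-time limit $W$. I would handle it by working only with weights, where the limit is canonical, and invoking \cite[5.5(1)]{Allard1972OnTF}-type uniqueness as done in \ref{proppar:rectifiablity of Brakke flow}.

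Finally, I would pass Brakke's inequality to the limit. The left-hand side $-\int \omega'\phi\, d\|V_{i_j}\|$ converges by weak convergence. On the right-hand side, upper semicontinuity of $t\mapsto \mathscr B(\|\langle\cdot,t\rangle\|,\phi)$ under varifold convergence with bounded $\phi|\mathbf h|^2$ holds by the standard argument of \cite[7.1]{Ilmanen-EllipticRegularization}: lower semicontinuity of $\int\phi|\mathbf h|^2$, with the gradient term handled by Cauchy–Schwarz and $|\nabla\phi|^2\le C\phi$. Combining this with Fatou's lemma in $t$ (the integrands are bounded above uniformly in $L^1_{\mathrm{loc}}$) gives inequality \eqref{eq:Brakke's distributional inequality} for $W$, together with local integrability of $\mathscr B$. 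Thus $W =: V$ is a space-time-Grassmann Brakke flow with $V = \lim_j V_{i_j}$, and \ref{prop: convergence of Brakke flows} shows this is convergence as Brakke flows.
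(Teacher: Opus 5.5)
The paper does not prove this statement; it quotes it from \cite[7.1]{Ilmanen-EllipticRegularization} and \cite[3.7]{tonegawa2019brakke}. The implicit reduction runs as follows:
\begin{itemize}
\item \ref{proppara: L1 to L infinity mass bound}, applied on relatively compact $J'\times U'$ since only local bounds are assumed, turns the space-time bounds into the uniform-in-time bounds Ilmanen needs.
\item His theorem gives a Brakke flow $\mu$ with $\mu_{i_j}(t)\to\mu(t)$ for every $t$.
\item The canonical measure $V$ of $\mu$ then satisfies $\|V_{i_j}\|\to\|V\|$, and \ref{prop: convergence of Brakke flows} upgrades this to convergence as Brakke flows.
\end{itemize}

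You reverse the order. You take a weak limit $W$ first and verify directly that it is a space-time-Grassmann Brakke flow. You identify its slices as integral varifolds through the weight-plus-Allard-rectifiability argument of \ref{proppar:rectifiablity of Brakke flow}. This is a legitimate and more intrinsic route, and your treatment of the $t$-dependent-subsequence obstacle is correct.

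What your route does not buy is independence from the citation. The pointwise-in-$t$ upper semicontinuity of $\mathscr{B}(\cdot,\phi)$ is the core of Ilmanen's theorem. Your parenthetical justification (Cauchy--Schwarz with $|\grad\phi|^2\le C\phi$) only \emph{bounds} $\int\grad\phi\bullet\mathbf{h}$; it does not give its convergence near $\partial U_\phi$. On the subsequence realising $\limsup_j\mathscr{B}$ you control only $\int\phi|\mathbf{h}|^2$, not the Fatou bound.

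In your framework you can avoid that step. For a.e. $t$ the slices have $L^2_{\mathrm{loc}}$ mean curvature on all of $U$, so
\begin{equation*}
\textstyle \int\omega(t)\,\mathscr{B}(\|\langle V_{i_j},t\rangle\|,\phi)\,d\mathscr{L}^1_t=-\int\omega(t)\phi(x)\,|\mathbf{h}(\langle V_{i_j},t\rangle,x)|^2\,d\|V_{i_j}\|_{(t,x)}-\delta V_{i_j}(\omega\grad\phi).
\end{equation*}
The last term converges to $\delta W(\omega\grad\phi)$ by weak convergence. Your uniform bound gives $\delta W=-\tilde{\mathbf{h}}\,\|W\|$ with $\tilde{\mathbf{h}}\in L^2_{\mathrm{loc}}(\|W\|)$, and then the first term is lower semicontinuous by duality. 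Disintegrating in $t$ and using your integrality of the slices then gives the inequality for $W$ directly.

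A minor point: \ref{convergence of monotone BV} goes from $L^1$ to pointwise convergence, not the reverse. After Helly selection, $L^1_{\mathrm{loc}}$ convergence comes from dominated convergence. The weak convergence then identifies the pointwise limit with $\|\langle W,t\rangle\|(\phi)$ for a.e.\ $t$.
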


\section{Tangent Flows and Parabolic Rectifiability}\label{sec: parabolic rectifiability}

\begin{propparagraph}[Flat Brakke Flows]\label{prop: flat Brakke flows}
    Let $V$ be a space-time-Grassmann Brakke flow on $J \times \mathbf{U} (0, R)$, for a connected open interval $J$, and suppose that there exists  $T \in \mathbf{G} (n, k)$, such that $\mathrm{Tan}^{k} ( \langle V, t \rangle, x) = T$, for $\| V \|$ almost all $(t, x) \in J \times \mathbf{U} (0, R)$, 
    and $\delta V = 0$.
    Then, there exists an at most countable set $S \subset T^{\perp} \cap \mathbf{U} (0, R)$, and functions, 
    \begin{equation*}
        \theta^{+}, \, \theta^{-} \colon J \times S \rightarrow \mathscr{P} \cup \{0\}, 
    \end{equation*}
    such that for each $x \in S$, 
    \begin{eqnarray*}
        && t \mapsto \theta^{+} (t, x), \, \text{is a non-increasing, right continuous function}, \\
        && t \mapsto \theta^{-} (t, x), \, \text{is a non-increasing, left continuous function},
    \end{eqnarray*}
    and, 
    \begin{equation*}
        \langle V^{+}, t \rangle = \sum_{x \in S} \theta^{+} (t, x) \cdot \mathbf{v} (T_{x} \cap \mathbf{U} (0, R)), \quad \langle V^{-}, t \rangle = \sum_{x \in S} \theta^{-} (t, x) \cdot \mathbf{v} (T_{x} \cap \mathbf{U} (0, R)),
    \end{equation*}
    where $T_{x} = \{ x + y \colon y \in T\}$.
    Moreover, we have that $\theta^{+} (t, x) \leq \theta^{-} (t, x)$, for all $(t, x) \in J \times S$, with equality for all, except for an at most countable $t \in J$. 
    Furthermore, for all $t \in J$, the sets
    \begin{equation*}
        S^{+} (t) = \{ x \in S \colon \theta^{+} (t, x) > 0\} \subset S^{-} (t) = \{ x \in S \colon \theta^{-} (t, x) > 0\},
    \end{equation*}
    are discrete, and relatively closed in $T^{\perp} \cap \mathbf{U} (0, R)$.
\end{propparagraph}

\begin{proof}
    For $\mathscr{L}^{1}$ almost all $t \in J$, by \cite[3.1]{Menne2010}, we have that there exists an at most countable, discrete, closed set $S (t) \subset T^{\perp}$, and a function $\theta (t) \colon S (t) \rightarrow \mathscr{P}$, such that, 
    \begin{equation*}
        \langle V ,t \rangle = \sum_{x \in S (t)} \mathbf{v} (T_{x}, \theta (t) (x)). 
    \end{equation*}
    Moreover, for all class $2$, non-negative functions $\phi \colon U \rightarrow \mathbf{R}$, with compact support, $\mathscr{B} (\| \langle V, t \rangle \|, \phi) = 0$ for $\mathscr{L}^{1}$ almost all $t \in J$.
    Thus, for $\mathscr{L}^{1}$ almost all $t_{1}, t_{2} \in J$, with $t_{1} \leq t_{2}$, we have that $\| \langle V, t_{2} \rangle \| \leq \| \langle V, t_{1} \rangle \|$, which implies that $S (t_{2}) \subset S (t_{1})$, and for $x \in S (t_{2})$, $\theta (t_{2}) (x) \leq \theta (t_{1}) (x)$. 
    The conclusions then follow from \ref{Space time Brakke flow:1}, \ref{Space time Brakke flow:2}, \ref{Space time Brakke flow:3}, \ref{Space time Brakke flow:4}.
\end{proof}

\begin{remark}
    The hypothesis $\delta V = 0$ in \ref{prop: flat Brakke flows} is actually superfluous. 
    Indeed, for $\mathscr{L}^{1}$ almost every $t \in J$, we have that $\langle V , t \rangle \in \mathbf{IV}_{k} (\mathbf{U} (0, R))$, with $\| \delta \langle V, t \rangle \|$ a Radon measure on $\mathbf{U} (0, R)$, such that $\| \delta \langle V, t \rangle \|$  is absolutely continuous with respect to $\| \langle V, t \rangle \|$.
    It then follows from \cite[8.3, 15.6]{Menne-WDFV} (which relies on \cite[Theorem 1]{MR3023856}), that for such $t \in J$, $\delta \langle V, t \rangle = 0$.
    The reason we include the assumption that $\delta V = 0$ is that it in the instances in this paper in which we use \ref{prop: flat Brakke flows}, we are able to deduce that $\delta V = 0$, and thus the results in this paper do not rely on any prior knowledge of \cite{MR3023856} and \cite{Menne-WDFV}.
\end{remark}

\begin{defparagraph}
    Suppose $0<r<\infty$, $V$ is a  Radon measure over $J \times \mathbf{G}_{k} (U)$, and $0<r<\infty$, we denote, for $(t_{0}, x_{0}) \in J \times U$ the rescalings
    \begin{equation*}
        V^{t_0,x_0;r}(\alpha) = \textstyle r^{-k-2}\int \alpha(r^{-2}(t-t_0),r^{-1}(x-x_0),S) \ d V(t,x,S),
    \end{equation*}
    for $\alpha \in \mathscr K(\{ s \, \colon \, t_{0} + r^{2} s \in J \} \times \G_k(\{ y \, \colon \, x_{0} + r y \in U\}))$.
    For $\Omega \subset J \times U$, we denote, 
    \begin{equation*}
        \textstyle \Omega^{t_{0}, x_{0}; r} = \{ (t, x) \, \colon \, (t_{0} + r^{2} t, x_{0} + r x) \in \Omega \}.
    \end{equation*}
\end{defparagraph}

\begin{definition}
    Suppose $V$ is a Radon over $J\times \G_k(U)$, for $(t, x) \in J \times U$ we define
    \[ \VarTan(V,t,x)\]
    to be Radon measures $C$ over $\mathbf{R} \times \mathbf{G}_{k} (\mathbf{R}^{n})$, such that there exists a sequence $r_{i} \searrow 0$, such that, for all $\alpha \in \mathscr{K} (\mathbf{R} \times \mathbf{G}_{k} (\mathbf{R^{n}}))$,  
    \begin{equation*}
         C (\alpha) = \lim_{i \rightarrow \infty} r_{i}^{-k - 2} \int \alpha (r_{i}^{-2} (s - t), r_{i}^{-1} (y - x), S) \, d V_{(s, y, S)}.
    \end{equation*}
\end{definition}

\begin{remark}\label{rmk:almost every where stationary tangent flow}
    If $(t,x)  \in J \times U$ and $\Theta_{\rho}^{\ast k+2}(\|\delta V\|,t,x)<\infty$, then  
    every $C \in \VarTan(V,t,x)$ is stationary, i.e. $\delta C = 0$.
    Note that  $\Theta_{\rho}^{\ast k+2}(\|\delta V\|,t,x)<\infty$ holds $\mathscr{H}^{k + 2}_{\rho}$ almost everywhere (see \cite[2.10.19(3)]{MR41:1976}), and hence by \ref{proppara: a.c. of weight measure and hausdorff measure for Brakke flows}, $\| V \|$ almost everywhere.
\end{remark}

\begin{theorem}[Existence of Tangent Flows \protect{\cite[3.8]{tonegawa2019brakke}}]\label{thm: existence of tangent flows}
    Let $V$ be a space-time-Grassmann Brakke flow on $J \times U$. 
    Then for every $(t_{0}, x_{0}) \in J \times U$, and sequence of positive numbers $r_{i} \downarrow 0$, there exists a subsequence $r_{i_{1}}, \, r_{i_{2}}, \ldots$, and a space-time-Grassmann Brakke flow $\tilde{V}$ on $\mathbf{R} \times \mathbf{G}_{k} (\mathbf{R}^{n})$
    such that, 
    \begin{equation*}
        \tilde{V} = \lim_{j \rightarrow \infty} V^{t_{0}, x_{0}; r_{i_{j}}}
    \end{equation*}
    and satisfies the following three properties,
    \begin{enumerate}[label=\ref{thm: existence of tangent flows}.\arabic{enumi}]
        \item\label{thm: scale invariance of tangent flows} For every $0<r<\infty$,
        \[ \langle \tilde{V}, - r^{2} \rangle^{0; r}  = \langle \tilde{V}, -1 \rangle,\]
        recalling the notation from \ref{proppara: varifold notation}.
        \item\label{thm: Monotonicity of tangent flows} For every $-\infty <t<0$,
        \[ \Theta(V,t_0.x_0) = \textstyle \int \Phi_{0,0}(t,x) \ d \|\langle \tilde{V},t\rangle\|_x.\]
        \item\label{thm: self shrinker equation} For $\tilde{V}$ almost all $(t, x, S)$ with $t<0$,
        \begin{equation*}
        \mathbf{h} (\langle \tilde{V}, t \rangle, x) = \frac{S^{\perp}_{\natural} (x)}{2 t}.
    \end{equation*}
    \end{enumerate}
\end{theorem}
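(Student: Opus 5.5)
The plan is the standard blow-up argument: rescale, pass to a limit with the compactness theorem, then read off the three properties from the monotonicity formula. Fix $(t_{0}, x_{0}) \in J \times U$ and $r_{i} \downarrow 0$, and write $V_{i} = V^{t_{0}, x_{0}; r_{i}}$. Each $V_{i}$ is a space-time-Grassmann Brakke flow over $(J \times U)^{t_{0}, x_{0}; r_{i}}$, since Brakke's inequality in Definition \ref{def: space-time-Grassmann Brakke flow} is invariant under parabolic rescaling: the mean curvature scales by $r$ and the time derivative by $r^{2}$.

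\textbf{Mass bounds and extraction.} To apply Theorem \ref{thm:compactness of Brakke flows} on exhausting domains $(-R^{2}, R^{2}) \times \mathbf{U}(0,R)$, I need $\limsup_{i} \| V_{i} \| (I \times K) < +\infty$ for compact $I$ and $K$. This is the same as the bound
\begin{equation*}
\| V \| (\mathbf{B}_{\rho} ((t_{0}, x_{0}), R r_{i})) \leq C (R r_{i})^{k + 2}
\end{equation*}
for small $r_{i}$, which is the upper density estimate \ref{prop: upper density bounds} applied on a relatively compact neighbourhood of $(t_{0}, x_{0})$. More precisely, I will use its scale-uniform form from the proof, which is the monotonicity formula \ref{prop:monotonicity formula} with the heat kernel centred slightly after the ball. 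A diagonal argument over $R \to \infty$ then gives a subsequence converging as Brakke flows to some $\tilde V$ on $\mathbf{R} \times \mathbf{G}_{k}(\mathbf{R}^{n})$. By Proposition \ref{prop: convergence of Brakke flows}, I also get convergence of the time slices $\|\langle V_{i_j}, t\rangle\|$ for all $t$ outside a countable set.

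\textbf{Gaussian densities and property \ref{thm: Monotonicity of tangent flows}.} Apply Theorem \ref{prop:monotonicity formula} with centre $(t_{0}, x_{0})$ and a cutoff $\phi$. Rescaling shows that $\int \Phi_{(0,0)}(t, x)\, d\|\langle V_{i}, t \rangle\|$ equals the Gaussian integral of $V$ at time $t_{0} + r_{i}^{2} t$, up to cutoff errors that vanish as $r_{i} \to 0$. As $i \to \infty$ this tends to $\Theta(V, t_{0}, x_{0})$ for each fixed $t < 0$. Passing to the limit needs uniform control of the Gaussian tails at infinity. I will get this from the uniform local mass bounds, summed over dyadic annuli against the exponential decay of $\Phi$ (the standard estimate in \cite[4.13–4.17]{EckerMCFBook}). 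Combined with slice convergence, this gives \ref{thm: Monotonicity of tangent flows} for all $t<0$ off the countable exceptional set. I extend to all $t<0$ by the semicontinuity of the slices in \ref{Space time Brakke flow:1}, \ref{Space time Brakke flow:2}, together with the fact that the Gaussian integral of a Brakke flow is monotone in $t$.

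\textbf{Self-shrinker equation and scale invariance.} Integrate the monotonicity inequality for $V_{i}$ over $t \in [a, b] \subset (-\infty, 0)$. The left side is the difference of Gaussian integrals at $a$ and $b$, which tends to $\Theta - \Theta = 0$. Hence
\begin{equation*}
\int_{a}^{b} \int \Big| \mathbf{h} (\langle V_{i}, t \rangle, x) - \frac{S^{\perp}_{\natural}(x)}{2t} \Big|^{2} \Phi_{(0,0)} \, dV_{i} \to 0 .
\end{equation*}
To transfer this to $\tilde V$ I use lower semicontinuity of this weighted $L^{2}$ quantity under convergence of space-time-Grassmann measures. Concretely, I write the quantity as a supremum over test vector fields $g$ of
\begin{equation*}
2\,\delta V_{i}(\Phi g) \text{-type terms} - \int |g|^{2} \Phi \, dV_{i},
\end{equation*}
which is linear in $V_{i}$ and hence passes to the limit, using the first-variation representation from Proposition \ref{prop: first variation is a space-time Radon measure} and the convergence $V_{i} \to \tilde V$. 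The limit integral therefore vanishes, giving \ref{thm: self shrinker equation}. For \ref{thm: scale invariance of tangent flows}, the self-shrinker equation together with equality in the monotonicity formula makes $\lambda \mapsto$ (the rescaled slice $\langle \tilde V, -\lambda^{2} \rangle^{0;\lambda}$) stationary. I will check this by computing the derivative in $\lambda$ of $\int \varphi \, d\|\langle\tilde V,-\lambda^2\rangle^{0;\lambda}\|$ via Brakke's inequality and observing that the vanishing self-shrinker defect forces the inequality to be an equality whose right side is zero. I expect this last step to be the main obstacle: Brakke's inequality is only an inequality, and mass could drop, so turning it into an identity needs the Gaussian-weighted equality above, applied with test functions $\varphi \Phi$ and letting the Gaussian weight be absorbed. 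Rather than re-derive all of this, I may follow \cite[3.8]{tonegawa2019brakke} for the final details.
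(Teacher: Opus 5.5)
\textbf{Main gap: scale invariance at every $r$.} Your outline agrees with the paper's: extraction via \ref{prop: upper density bounds} and \ref{thm:compactness of Brakke flows}, then the self-similarity argument of Tonegawa's 3.8, which you sketch in more detail than the paper does. But it skips the one step the paper actually writes out.

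In this framework $\langle\tilde V,t\rangle$ is defined, as a limit of time averages, only for $\mathscr L^1$ almost all $t$. So any argument through Brakke's inequality and the self-shrinker equation (yours or Tonegawa's) gives \ref{thm: scale invariance of tangent flows} only for almost all $r$. The statement asserts it for every $r>0$, and this includes the claim that $\langle\tilde V,-r^2\rangle$ exists at all.

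The paper closes this in three moves:
\begin{enumerate}
\item It obtains $W\in\IVar_k(\mathbf R^n)$ with $r^{-k}(\boldsymbol\mu_{0,r^{-1}})_\sharp\langle\|\tilde V\|^{\pm},-r^2\rangle=\|W\|$ for almost all $r$.
\item It extends this to every $r$ using the one-sided continuity \ref{Space time Brakke flow:1}, \ref{Space time Brakke flow:2}.
\item It uses $\langle\tilde V,-r^2\rangle=W^{0;r^{-1}}$ for almost all $r$, together with continuity of $r\mapsto W^{0;r^{-1}}$, to see that the averages defining $\langle\tilde V,t\rangle$ converge at every $t<0$.
\end{enumerate}
You invoked the one-sided continuity for \ref{thm: Monotonicity of tangent flows}. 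It is needed for \ref{thm: scale invariance of tangent flows} as well, together with this continuity argument for the Grassmann slices.

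\textbf{Two smaller corrections.}

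First, the vanishing self-shrinker defect does not turn Brakke's inequality into an equality. With $\mathbf h=S^\perp_\natural(x)/(2t)$, Brakke's inequality and the first variation formula for $x\mapsto\varphi(x/\lambda)\,x$ show only that $\lambda\mapsto\|\langle\tilde V,-\lambda^2\rangle^{0;\lambda}\|$ is nondecreasing in $\lambda$ as measures. For instance, a plane through $0$ whose multiplicity drops still satisfies the self-shrinker equation. What excludes a drop is that, by \ref{thm: Monotonicity of tangent flows}, all these measures have the same finite integral against the positive weight $\Phi_{0,0}(-1,\cdot)$. A monotone family with that property is constant.

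Second, in your semicontinuity argument for \ref{thm: self shrinker equation}, the fields $g$ may depend only on $(t,x)$; otherwise the $\mathbf h$-term is not a first variation. The supremum over such $g$ still recovers the full defect. This is because $\tilde V$ has integral slices for almost all $t$, so its Grassmann fibres are Dirac masses $\|\tilde V\|$ almost everywhere.
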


\begin{proof}
By \ref{prop: upper density bounds} and \ref{thm:compactness of Brakke flows}, we have that there exists a subsequence $r_{i_{1}}, \, r_{i_{2}}, \ldots$, and a space-time-Grassmann Brakke flow $\tilde{V}$ over $\mathbf{R} \times \mathbf{G}_{k} (\mathbf{R}^{n})$, such that $\tilde{V} = \lim_{j \rightarrow \infty} V^{t_{0}, x_{0}; r_{i_{j}}}$. 
Proceeding as in the proof of \cite[3.8]{tonegawa2019brakke}, 
there exists $W\in \IVar_k(\mathbf R^n)$ such that for $\mathscr{L}^{1}$ almost all $r < 0$,
\begin{equation*}
    r^{-k} (\boldsymbol{\mu}_{r^{-1}})_{\sharp}\langle \| \tilde{V} \|^{-}, -r^{2} \rangle= \| W\| = r^{-k}(\boldsymbol{\mu}_{r^{-1}})_{\sharp} \langle \| \tilde{V} \|^{+}, - r^{2} \rangle,
\end{equation*}
and hence, by \ref{Space time Brakke flow:1}, and \ref{Space time Brakke flow:2}, one extends this to all $r > 0$.
Moreover, for $\mathscr{L}^{1}$ almost all $r > 0$, 
we have
\begin{equation*}
    \langle \tilde{V}, - r^{2} \rangle = W^{0; r^{-1}},
\end{equation*}
and hence, as $W^{0; r^{-1}}$ is continuous in $r > 0$, $\langle \tilde{V}, t \rangle$ will exist for all $t < 0$, with the above holding for all $r > 0$.
Proceeding as in \cite[3.8]{tonegawa2019brakke} one also deduces \ref{thm: Monotonicity of tangent flows} and \ref{thm: self shrinker equation}.

\end{proof}

\begin{remark}
    By \ref{thm: existence of tangent flows}, for a space-time-Grassamann Brakke flow $V$, we have that for all $(t, x) \in J \times U$, $\mathrm{VarTan} (V, t, x)$ is non-empty. 
\end{remark}

\begin{definition}
    Whenever $n,m \in \mathscr P$, $m\leq n$, $T\in \G(n,m)$, $a\in \mathbf R \times \mathbf R^n$,
    $0<r<\infty$, and $0<s<\infty$, we define 
    \[ X(a,r,\mathbf{R} \times T,s) = (\mathbf R \times \mathbf R^n) \cap \{ b : s^{-1}\dist_{d}(b-a, \mathbf{R} \times T) < \|b-a\| <r\}.\]
\end{definition}
\begin{lemma}[\protect{\cite[3.5(2)]{mattila2021parabolicrectifiabilitytangentplanes}}]
\label{Paraoblic Lipshitz graph condition}
    Suppose, $T \in \G(n, k)$, $\mathbf p_{T}$ and $\mathbf q_{T}$ denote the orthogonal projections on to $\mathbf{R} \times T$ and $\{0\} \times T^{\perp}$, respectively,
    $E\subset \mathbf R \times \mathbf R^n$, 
    $0<r<\infty$, $0<s<1$, and  
    \[ E \cap \mathbf{B}_{d} (a, r) \sim \Clos \, X(a,r, \mathbf{R} \times T, s) = \emptyset \ \text{for all $a\in E$}.\]
    Then for every subset $S \subset E$ with diameter less than or equal to $r$, $\bfp_{T}|S$ is univalent, $\Lip \bfq_{T} \circ (\bfp_{T}|S)^{-1}\leq s (1 - s^{2})^{-1 / 2}$, and 
    $S = \{ x + \bfq_{T}\circ (\bfp_{T} |S)^{-1}(x) : x\in \mathbf{p}_{T} (S)\}$.
\end{lemma}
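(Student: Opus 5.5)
The plan is to use the hypothesis once for each pair of points of $S$, and to reduce everything to a single cone inequality. The facts that make this work are that $\mathbf{q}_{T}$ is linear and that the parabolic norm splits as $\|\xi\|^2 = \|\mathbf{p}_T\xi\|^2 + \|\mathbf{q}_T\xi\|^2$.

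\emph{Step 1: compute the distance to $\mathbf{R}\times T$.} I would first show that $\dist_d(\xi,\mathbf{R}\times T) = \|\mathbf{q}_T(\xi)\|$ for $\xi=(t,x)$. For any $(\tau,y)\in\mathbf{R}\times T$ we have $|t-\tau|+|x-y|^2 \geq |x-y|^2 \geq |\mathbf{q}_T\xi|^2$, and equality holds when $\tau=t$ and $y$ is the orthogonal projection of $x$ onto $T$. Hence every $c\in X(a,r,\mathbf{R}\times T,s)$ satisfies $\|\mathbf{q}_T(c-a)\| < s\|c-a\|$.

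\emph{Step 2: pass to the closure.} Both sides of this inequality are continuous in $c$. So every $b\in\Clos X(a,r,\mathbf{R}\times T,s)$ satisfies
\begin{equation*}
\|\mathbf{q}_T(b-a)\|\leq s\|b-a\|.
\end{equation*}
Now fix $a,b\in S$. Since $\operatorname{diam} S\leq r$, we have $b\in E\cap\mathbf{B}_d(a,r)$. The hypothesis then gives $b\in\Clos X(a,r,\mathbf{R}\times T,s)$, so the displayed inequality holds for $b-a$. This is the only step that needs any care: it uses that the hypothesis is stated with the closure and with the closed ball, and it needs only this easy direction of the closure.

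\emph{Step 3: the algebra.} Put $\xi=b-a$. The splitting of the norm gives $\|\mathbf{q}_T\xi\|^2\leq s^2(\|\mathbf{p}_T\xi\|^2+\|\mathbf{q}_T\xi\|^2)$. Since $s<1$, this rearranges to
\begin{equation*}
\|\mathbf{q}_T\xi\|\leq s(1-s^2)^{-1/2}\|\mathbf{p}_T\xi\|.
\end{equation*}
Univalence follows at once: if $\mathbf{p}_T a=\mathbf{p}_T b$, then $\mathbf{q}_T(b-a)=0$, so $b=a$. Set $g=\mathbf{q}_T\circ(\mathbf{p}_T|S)^{-1}$, which is defined on $\mathbf{p}_T(S)$. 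By linearity of $\mathbf{q}_T$, $|g(\mathbf{p}_T b)-g(\mathbf{p}_T a)| = \|\mathbf{q}_T(b-a)\|$, so the inequality above is exactly the Lipschitz bound $\Lip g\leq s(1-s^2)^{-1/2}$ with respect to $\|\cdot\|$. Finally, every $b\in S$ satisfies $b=\mathbf{p}_T b+\mathbf{q}_T b = x+g(x)$ with $x=\mathbf{p}_T b$. This gives the representation $S=\{x+\mathbf{q}_T\circ(\mathbf{p}_T|S)^{-1}(x) : x\in\mathbf{p}_T(S)\}$.
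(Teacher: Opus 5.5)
Your proposal is correct and follows the same route as the paper. Both arguments use the hypothesis to place $b\in\Clos X(a,r,\mathbf{R}\times T,s)$ for all $a,b\in S$, turn this into the cone inequality $\|\mathbf{p}_T(b-a)\|^2\geq(1-s^2)\|b-a\|^2$ (equivalently $\|\mathbf{q}_T(b-a)\|\leq s\|b-a\|$) via the splitting of the parabolic norm, and read off univalence, the Lipschitz bound and the graph representation; you additionally supply the details the paper leaves implicit, namely $\dist_d(\xi,\mathbf{R}\times T)=\|\mathbf{q}_T\xi\|$ and the passage from the strict inequality on $X$ to the non-strict one on its closure.
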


\begin{proof}
    If $a,b\in S$, $\|a-b\| \leq r$, then 
    $b \in E \cap \mathrm{clos} \, X(a,r, \mathbf{R} \times T,s)$, and thus 
    \[ \|\mathbf{p}_{T}(a-b)\|^2 \geq (1 - s^2) \|a-b\|^2,\]
    from which we infer $\Lip\bfq_{T} \circ (\bfp_{T}|S)^{-1} \leq s(1 - s^{2})^{-1/2}$, and the rest of the statement follows.
\end{proof}

\begin{lemma}\label{flat tangent flow lemma}
  Suppose $V$ is a space-time-Grassmann Brakke flow in $J\times U$, and $(t_{0}, x_{0}) \in \mathrm{spt} \, \| V \|$, is such that, 
  \begin{equation*}
      \Theta^{* k + 2}_{\rho} (\| \delta V \|, t_{0}, x_{0}) < + \infty, 
  \end{equation*}
  $T = \Tan^{k}(\langle V,t_{0}\rangle,x_{0}) \in \G(n,k)$, and for every $0<\varepsilon<\infty$,
  \[ \lim\limits_{r\downarrow 0} \frac{\|V\|(\mathbf B_{\rho}(t_{0}, x_{0}, r)\cap \{ (t, x) : \|\Tan^{k}(\langle V,t\rangle,x)-T\|>\varepsilon\})}{\|V\|(\mathbf B_{\rho}(t_{0}, x_{0}, r))} = 0,\]
  and moreover suppose we have a sequence of positive real numbers $r_1,r_2,\dots$, tending to $0$, such that
  \[ W = \lim\limits_{i\to\infty} V^{t_0,x_0;r_i}.\]
  Then 
  there exists non-increasing functions $\theta^{-}$ and $\theta^{+}$, on $\mathbf{R}$, which are non-negative integer valued, such that
  \[ \langle W^{\pm},t\rangle = \theta^{\pm}(t)\cdot \vVarOp (T) \ \text{ for all $t\in \mathbf R$},\] 
  \[  \theta^{\pm} (t) = \Theta(V,t_0,x_0), \ \text{for all $t<0$},\]
  \[  \theta^{+} (0) \leq \mathbf{\Theta}^{k} (\| \langle V, t_{0} \rangle \|, x_{0}) \leq \theta^{-} (0). \]
\end{lemma}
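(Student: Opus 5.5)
First I would establish the structure of $W$. By \ref{thm: existence of tangent flows} and \ref{thm:compactness of Brakke flows}, I may pass to a subsequence along which $V^{t_0,x_0;r_i}$ converges as Brakke flows. Since $W$ is given as the limit of the full sequence, $W$ is itself a space-time-Grassmann Brakke flow, and it enjoys \ref{thm: scale invariance of tangent flows}--\ref{thm: self shrinker equation}. The hypothesis $\Theta^{*k+2}_\rho(\|\delta V\|,t_0,x_0)<\infty$ gives $\delta W=0$, exactly as in \ref{rmk:almost every where stationary tangent flow}: $\|\delta V^{t_0,x_0;r}\|$ of a fixed parabolic ball scales like $r\cdot r^{-(k+2)}\|\delta V\|(\mathbf B_\rho(\cdot,Rr))$, which tends to $0$.

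Next I would show that the tangent planes of $W$ all equal $T$. The approximate continuity hypothesis, together with the lower and upper density bounds \ref{prop: density bounds} (needed to compare $\|V\|$ of balls at comparable scales), implies that for every $\varepsilon>0$ and every $R$,
\[ W(\mathbf B_\rho(0,R)\times\{S:\|S-T\|>\varepsilon\}) \le \liminf_i V^{t_0,x_0;r_i}(\ldots)=0. \]
This uses that the Grassmannian component of $V$ at $(t,x)$ is $\delta_{\Tan^k(\langle V,t\rangle,x)}$ for $\|V\|$ a.e. point, and that the set in question is open. Hence $W$ is supported on $S=T$, so $\Tan^k(\langle W,t\rangle,x)=T$ for $\|W\|$ a.e. point. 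Applying \ref{prop: flat Brakke flows} on $\mathbf R\times\mathbf U(0,R)$ for every $R$ then yields $\langle W^\pm,t\rangle=\sum_{y\in S}\theta^\pm(t,y)\,\mathbf v(T_y)$, with $\theta^\pm(\cdot,y)$ non-increasing.

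For $t<0$ I would use the self-shrinker equation \ref{thm: self shrinker equation}. Since $\delta W=0$ gives $\mathbf h=0$ almost everywhere, we get $T^\perp_\natural(x)=0$, that is $x\in T$, for $\|W\|$-a.e. $(t,x)$ with $t<0$. Thus only $y=0$ survives, and $\langle W^\pm,t\rangle=\theta^\pm(t)\mathbf v(T)$. By \ref{thm: Monotonicity of tangent flows}, and since the Gaussian integral of $\mathbf v(T)$ equals $1$, $\theta^\pm(t)=\Theta(V,t_0,x_0)$ for $t<0$. For $t\ge 0$, monotonicity shows that the support at time $t$ is contained in the support at negative times, so $S^\pm(t)\subset\{0\}$ and the form $\theta^\pm(t)\mathbf v(T)$ persists, with $\theta^\pm$ non-increasing.

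The main obstacle is the time-zero inequality $\theta^+(0)\le\Density^k(\|\langle V,t_0\rangle\|,x_0)\le\theta^-(0)$. Here I would use the hypothesis that $T=\Tan^k(\langle V,t_0\rangle,x_0)$ exists, so that $\langle V,t_0\rangle^{x_0;r}\to\Density^k\,\mathbf v(T)$, where $\langle V,t_0\rangle$ denotes the time slice of a representative, lying between $\langle\|V\|^+,t_0\rangle$ and $\langle\|V\|^-,t_0\rangle$. Take $\phi\ge0$ compactly supported and use the one-sided Lipschitz monotonicity from \cite[3.10.2]{LW-Space-Time-GrassmannMeasureBrakkeFlow}, as in the proof of \ref{prop: convergence of Brakke flows}. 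In rescaled form, $t\mapsto \mu_i(t)(\phi)-Lt$ is non-increasing uniformly in $i$, where $\mu_i$ is the rescaled representative and $L$ is independent of $i$ by scaling. Averaging over $[0,\delta]$ gives
\[ \mu_i(0)(\phi)\ge \delta^{-1}\int_0^\delta \mu_i(t)(\phi)\,dt - L\delta \to \delta^{-1}\int_0^\delta\theta^+ \,\mathbf v(T)(\phi) - L\delta, \]
and letting $\delta\downarrow0$ with right continuity of $\theta^+$ yields $\Density^k\ge\theta^+(0)$. The symmetric averaging over $[-\delta,0]$, combined with left continuity of $\theta^-$, gives $\Density^k\le\theta^-(0)$.
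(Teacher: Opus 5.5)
Your proposal is correct and follows the paper's route. As in the paper, you show that $W$ is a Brakke flow with the tangent-flow properties, that $\delta W=0$ from the finite upper density of $\|\delta V\|$, and that $W$ is concentrated on $\{S=T\}$ by approximate continuity; you then conclude via \ref{prop: flat Brakke flows} and \ref{thm: existence of tangent flows}. Two small points: only the upper density bound is needed in the concentration step, and you should use the open ball $\mathbf U_\rho(0,R)$ there for the lower semicontinuity.

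What you add is detail the paper leaves implicit in its one-line conclusion, and both additions are valid. You reduce to the single plane $T$ for $t<0$ via \ref{thm: self shrinker equation} and the Gaussian density. You obtain the time-zero inequality from the uniform one-sided Lipschitz bound, by averaging over $[0,\delta]$ and $[-\delta,0]$ and using the one-sided continuity of $\theta^{\pm}$.
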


\begin{proof}
    By \ref{thm: existence of tangent flows} we have that $W \in \mathrm{VarTan} (V, t_{0}, x_{0})$ is a space-time-Grassmann Brakke flow over $\mathbf{R} \times \mathbf{G}_{k} (\mathbf{R}^{n})$, and by \ref{rmk:almost every where stationary tangent flow}, $\delta W = 0$.
    Moreover, one readily verifies that
  \begin{equation*}
    \textstyle \int \varphi (t, x) \| T - S \| \, d W_{(t, x, S)} = \textstyle  \lim\limits_{i\to\infty}\int \varphi(t,x) \| T-S \| \ d V^{t_0,x_0,r_i}_{(t,x,S)} = 0 
\end{equation*}
for every $\varphi \in \mathscr K( \mathbf R \times \G_k(\mathbf R^n))$. 
The conclusions then follow from \ref{prop: flat Brakke flows} and \ref{thm: existence of tangent flows}.
\end{proof}

\begin{lemma}\label{lem:convergence of blow up spt}
  Suppose $V$ is a space-time-Grassmann Brakke flow over $J\times \mathbf{G}_{k} (U)$, and $(t_0,x_0)\in \mathrm{spt} \, \| V \|$, is such that
  \begin{equation*}
      \Theta^{* k+2} (\| \delta V \|, t_{0}, x_{0}) < + \infty,
  \end{equation*}
  $T = \Tan^k(\langle V, t_{0} \rangle, x_0) \in \G(n,k)$, and for every $0<\varepsilon<\infty$,
  \[ \lim\limits_{r\downarrow 0} \frac{\|V\|(\mathbf B_{\rho}(t_{0},x_{0},r)\cap \{ (t, x) : \|\Tan^{k}(\langle V,t\rangle,x)-T\|>\varepsilon\})}{\|V\|(\mathbf B_{\rho}(t_{0}, x_{0}, r))} = 0,\]
  and $(s,y)\notin \mathbf R \times T$.
  Then 
  \[ \liminf\limits_{r\downarrow 0} \dist_{d}((s,y),(\mathrm{spt} \, \| V \|)^{t_0,x_0;r})> 0 .\]
\end{lemma}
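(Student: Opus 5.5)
We argue by contradiction, combining the structure of blow-ups from \ref{flat tangent flow lemma} with the mean value inequality \ref{prop: Brakke flow mean value inequality}. Suppose that the $\liminf$ vanishes. Then there are $r_{i} \downarrow 0$ and points $(s_{i}, y_{i}) \in (\mathrm{spt} \, \| V \|)^{t_{0}, x_{0}; r_{i}}$ with $(s_{i}, y_{i}) \rightarrow (s, y)$. The bound on $\| V \|$ needed for compactness comes from \ref{prop: upper density bounds}. By \ref{thm: existence of tangent flows}, after passing to a subsequence we may assume $V^{t_{0}, x_{0}; r_{i}} \rightarrow W$ as Brakke flows. By \ref{flat tangent flow lemma}, $W$ is a flat flow, $\langle W^{\pm}, t \rangle = \theta^{\pm}(t) \cdot \mathbf{v}(T)$. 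In particular $\spt \| W \| \subset \mathbf{R} \times T$, since $\| W \|$ is the time integral of the slices. Set $\delta = \dist_{d}((s, y), \mathbf{R} \times T) > 0$; this is positive because $\mathbf{R} \times T$ is closed. Then $\| W \|$ vanishes on a $d$-neighbourhood of $(s, y)$ of radius comparable to $\delta$.

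We next transfer the support points of the rescaled flows to a lower mass bound. Note that $(s_{i}, y_{i}) \in \spt \| V^{t_{0}, x_{0}; r_{i}} \|$, because the support rescales with the flow and the rescaled flow is again a Brakke flow. Choose $r > 0$ small in terms of $\delta$, so that for large $i$ the parabolic cylinder $\mathbf{P}(s_{i}, y_{i}, r)$ lies inside a fixed compact set $K$ disjoint from $\mathbf{R} \times T$. This uses that $\mathbf{P}(s_{i}, y_{i}, r) \subset \mathbf{B}_{\rho}((s_{i}, y_{i}), r)$ together with the bi-Lipschitz equivalence of $\rho$ and $d$. Apply \ref{prop: Brakke flow mean value inequality} with $f = 1$, which trivially satisfies the subsolution hypothesis, or equivalently use the lower density bound \ref{prop: lower density bound}. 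This gives
\begin{equation*}
    \| V^{t_{0}, x_{0}; r_{i}} \| (\mathbf{P}(s_{i}, y_{i}, r)) \geq c \, r^{k + 2} \quad \text{for all large $i$}.
\end{equation*}
Hence $\| V^{t_{0}, x_{0}; r_{i}} \| (K) \geq c \, r^{k+2}$ for all large $i$.

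To conclude, pass to the limit. Since $V^{t_{0}, x_{0}; r_{i}} \rightarrow W$ as Brakke flows, \ref{prop: convergence of Brakke flows:2} gives weak convergence of the weights. Upper semicontinuity on compact sets then yields $\| W \| (K) \geq \limsup_{i} \| V^{t_{0}, x_{0}; r_{i}} \| (K) \geq c \, r^{k + 2} > 0$. This contradicts $K \cap \spt \| W \| = \emptyset$.

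The main obstacle is the first step: showing $\spt \| W \| \subset \mathbf{R} \times T$, i.e. that all the sheets $T_{x}$ of \ref{prop: flat Brakke flows} pass through $x = 0$. For $t < 0$ this uses the self-similarity \ref{thm: scale invariance of tangent flows}. A union of parallel planes $T_{x}$ invariant under all dilations must have $S \subset \{0\}$, since each nonzero $x$ would have to be accompanied by the whole ray $\lambda x$, contradicting the discreteness of $S^{\pm}(t)$. For $t \geq 0$ we use the monotonicity in $t$: $S^{\pm}(t)$ is non-increasing in $t$ (the multiplicities are non-increasing), so $S^{\pm}(t) \subset S^{\pm}(t') \subset \{0\}$ for any $t' < 0$. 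If \ref{flat tangent flow lemma} already records that only the plane $T$ itself occurs, this step is immediate. Otherwise we prove it as just described, before running the mean value argument above.
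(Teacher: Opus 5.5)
Your argument is correct and is essentially the paper's proof. You pass to a tangent flow $W$, which by \ref{flat tangent flow lemma} is already of the form $\theta^{\pm}(t)\cdot\mathbf{v}(T)$, so $\|W\|$ vanishes near $(s,y)$, and you contradict this with the fixed-scale lower mass bound from \ref{prop: Brakke flow mean value inequality} with $f=1$. The paper phrases it directly rather than by contradiction: for large $i$, the rescaled supports miss $\mathbf{B}_{\rho}((s,y),R)$ whenever $\mathbf{B}_{\rho}((s,y),2R)$ misses $\mathbf{R}\times T$.

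Two small remarks. Your fallback for the sheets is unnecessary. As written it is also slightly off: \ref{thm: scale invariance of tangent flows} relates slices at different times and does not make a single slice dilation invariant, so it only yields $S\subset\{0\}$ after combining with monotonicity in $t$. Also, \ref{prop: lower density bound} is not an equivalent substitute for the mean value inequality. It only controls $\liminf_{r\downarrow 0}$ at each point, whereas you need the bound at a fixed radius uniformly in $i$.
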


\begin{proof}
Let $r_{1}, \, r_{2}, \, \ldots \downarrow 0$ be a sequence of positive real numbers, and choose $R$ so that 
$\mathbf B_{\rho}(s,y, 2R) \cap (\mathbf R \times T) = \emptyset$. 
By \ref{thm: existence of tangent flows}, we may assume that $V^{t_{0}, x_{0}; r_{i}}$ converge as Brakke flows, to a space-time-Grassmann Brakke flow $W$, satisfying the conclusions of \ref{flat tangent flow lemma}.
Taking $i$ large enough such that $\mathbf{B}_{\rho} (s, y,  3R) \subset (J \times U)^{x_{0}, t_{0}; r_{i}}$, we have by \ref{prop: Brakke flow mean value inequality} that if $(\mathrm{spt} \, \| V \|)^{t_{0}, x_{0}; r_{i}} \cap \mathbf{B}_{\rho} (s, y, R) \not= \emptyset$, then, 
\begin{equation*}
    \| V^{t_{0}, x_{0}; r_{i}} \| (\mathbf{B}_{\rho} (s, y,  2R)) \geq c (n, k) R^{k + 2}.
\end{equation*}
However, from \ref{flat tangent flow lemma} we have, $\| W \| (\mathbf{B}_{\rho} (s, y,  2R)) = 0$
and the conclusion follows.
\end{proof}

 \begin{defparagraph}\label{def: notation for static planar Brakke flow}
For every $T\in \G(n,k)$, let $v(J;T)$ be the Brakke flow characterized by 
     \[ v(J ; T) (\alpha) = \textstyle \int_J \alpha(t,x,T) \ d \mathscr H^k\weight T \ d \mathscr L^1_t, \quad \text{for $\alpha\in \mathscr K(J\times \G_k(U))$},\]
     and recalling \ref{para: parabolic metrics} we have that
     \begin{equation*}
          \| v (J; T) \| = \boldsymbol{\beta} (k) \boldsymbol{\omega} (k)^{-1} \mathscr{H}^{k + 2}_{d} \weight \mathbf{R} \times T = 2 \boldsymbol{\alpha} (k) \boldsymbol{\nu} (k)^{-1} \mathscr{H}^{k + 2}_{\rho} \weight \mathbf{R} \times T.
     \end{equation*}
 \end{defparagraph}

\begin{theorem}
\label{thm:rectifiability of Brakke flow}
    Suppose $V$ is a space-time-Grassmann Brakke flow over $J\times \mathbf{G}_{k} (U)$.
    Then, $\spt \|V\|$ is a (vertical) parabolic ($k+2$)-rectifiable set in $J \times U$.
\end{theorem}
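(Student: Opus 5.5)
The plan is to reduce to the Lipschitz graph criterion \ref{Paraoblic Lipshitz graph condition} by a cone-containment argument at good points. First I identify a set $G \subset \spt \|V\|$ of full $\|V\|$ measure, and hence, by \ref{proppara: a.c. of weight measure and hausdorff measure for Brakke flows}, of full $\mathscr{H}^{k+2}_d \weight \spt\|V\|$ measure. At each point of $G$ the hypotheses of \ref{lem:convergence of blow up spt} hold:
\begin{itemize}
\item $\Theta^{*k+2}_\rho(\|\delta V\|,\cdot)<\infty$, which holds $\|V\|$ almost everywhere by \ref{rmk:almost every where stationary tangent flow} and \ref{prop: first variation is a space-time Radon measure};
\item $T(t,x)=\Tan^k(\langle V,t\rangle,x)$ exists, which holds for $\|V\|$ almost all points since $\langle V,t\rangle$ is integral for $\mathscr L^1$ almost all $t$;
\item $T$ is $(\|V\|,C)$ approximately continuous at the point, which follows from \ref{proppara: approximate continuity} applied to the $\|V\|$ measurable map $(t,x)\mapsto T(t,x)\in\G(n,k)$, a separable metric space.
\end{itemize}
Since the statement is local and rectifiability is countably stable, I may work in $J'\times U'$ relatively compact and assume $\|V\|(J\times U)<\infty$.

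Next I upgrade the pointwise conclusion of \ref{lem:convergence of blow up spt} to a uniform cone condition. Fix $a=(t_0,x_0)\in G$ with plane $T$, and fix $0<s<1$. The claim is that there is $r_a>0$ with
\[ \spt\|V\|\cap \mathbf B_d(a,r_a)\subset \Clos X(a,r_a,\mathbf R\times T,s)\cup\{a\}. \]
Argue by contradiction. If it fails, there are $b_i\in\spt\|V\|$ with $\|b_i-a\|=:r_i\downarrow 0$ and $\dist_d(b_i-a,\mathbf R\times T)\ge s r_i$. Rescaling by $T_{a,r_i}$, the points $\tilde b_i$ lie on the parabolic unit sphere at $d$-distance at least $s$ from $\mathbf R\times T$. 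By compactness $\tilde b_i\to (s',y')\notin\mathbf R\times T$, and then $\dist_d((s',y'),(\spt\|V\|)^{t_0,x_0;r_i})\to0$.

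This does not directly contradict \ref{lem:convergence of blow up spt}, which concerns a fixed point. I instead rerun its proof uniformly. For large $i$, the mean value inequality \ref{prop: Brakke flow mean value inequality} with $f=1$ at $\tilde b_i$ gives mass at least $cR^{k+2}$ of the rescaled flow in $\mathbf B_\rho(\tilde b_i,R)\subset\mathbf B_\rho((s',y'),2R)$. Passing to a subsequence converging as Brakke flows (\ref{thm: existence of tangent flows}), \ref{flat tangent flow lemma} gives a limit $W$ supported in $\mathbf R\times T$. Then $\|W\|(\mathbf B_\rho((s',y'),2R))=0$, which contradicts weak convergence of $\|V^{a;r_i}\|$ on a slightly larger ball.

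Then comes the standard Egorov/Lusin style decomposition. For fixed $s$ and a countable dense set $\{T_j\}\subset\G(n,k)$, set
\[ E_{j,m}=\{a\in G:\ \|T(a)-T_j\|<s,\ r_a\ge 1/m\}. \]
Because of the error $\|T(a)-T_j\|$, I use the cone condition with angle $s$ and absorb the error to get a cone of angle $\sim 2s$ around $\mathbf R\times T_j$ valid for every point of $E_{j,m}$ at scale $1/m$. Here the spatial plane perturbation must be checked to control $\dist_d$ to $\mathbf R\times T_j$, which is fine since the time direction is shared. Measurability of $E_{j,m}$ is not an issue, as we only need a covering. Splitting $E_{j,m}$ into pieces of $d$-diameter at most $1/m$, \ref{Paraoblic Lipshitz graph condition} with $E=E_{j,m}$ shows each piece is a parabolic Lipschitz graph over $\mathbf R\times T_j$ with constant $\le 2s(1-4s^2)^{-1/2}$. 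Given $L$, I choose $s$ small, so countably many $(k+2,L)$-graphs cover $G$, and $\mathscr H^{k+2}_d(\spt\|V\|\sim G)=0$ finishes the proof.

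The main obstacle is the uniform cone step. \ref{lem:convergence of blow up spt} as stated is pointwise, so the contradiction must be run with moving points $\tilde b_i$ and must use the support information from \ref{flat tangent flow lemma}. That information says $\langle W^\pm,t\rangle$ is a multiple of $\vVarOp(T)$ for every $t$, including $t\ge0$. Because the limit is taken in the sense of weak convergence of measures, positive mass near $(s',y')$ survives in the limit, which gives the contradiction.
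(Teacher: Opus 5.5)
Your proposal is correct and follows essentially the paper's proof. Both use the same good set (approximate continuity of $\Tan^k$ and finite upper density of $\|\delta V\|$), a blow-up contradiction at each good point giving a cone condition, a countable decomposition over a dense sequence of planes and scales, and finally \ref{Paraoblic Lipshitz graph condition}. The paper puts $T_i$ and the scale $1/j$ directly into the sets $E_{i,j}$ instead of first defining $r_a$, but this is a cosmetic difference.

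One small correction. The contradiction with \ref{lem:convergence of blow up spt} is in fact direct, so rerunning its proof with moving points $\tilde b_i$ is unnecessary, though harmless. The point $(s',y')$ is a fixed point off $\mathbf R\times T$, and $\dist_d((s',y'),(\spt\|V\|)^{t_0,x_0;r_i})\to 0$ along $r_i\downarrow 0$ already forces the $\liminf$ in that lemma to be $0$.
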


\begin{proof}

Let $T_1,T_2,\dots$ be a dense sequence in $\G(n,k)$.
Let $\rho$ and $C$ be as in \ref{proppara: parabolic Besicovitch}, and $N$ be the set of $(t,x) \in J \times U$ at which $\mathrm{Tan}^{k} (\| \langle V, t \rangle \|, x)$ is not $(\|V\|,C)$ approximately continuous, or $\Theta^{* k + 2} (\| \delta V \|, t, x) = + \infty$. 
Then, from \cite[Theorem C]{LW-Space-Time-GrassmannMeasureBrakkeFlow}, \ref{proppara: approximate continuity}, and \ref{rmk:almost every where stationary tangent flow}, $\|V\|(N) = 0$ , and then by \ref{proppara: a.c. of weight measure and hausdorff measure for Brakke flows}, $\mathscr H^{k+2}_{d}(N \cap \mathrm{spt} \, \| V \|) = 0$ (equivalently $\mathscr{H}^{k+2}_{\rho} (N \cap \mathrm{spt} \, \| V \|) = 0$).
    Fix $0 < s < 1$, and denote $E = \spt \|V\| \sim N$, and
    \[ E_{i, j} = E \cap  \{  a : E \cap \mathbf{B}_{d} (a, 1 / j)\sim  \Clos X(a,1 / j, \mathbf{R} \times T_{i} ,s) = \emptyset  \}.\]
    We will show that by \ref{lem:convergence of blow up spt}, that $E = \bigcup_{i, j=1}^\infty E_{i, j}$.
    Indeed, if $a = (t, x)\in E$ and we take $T_{i}$ such that $\| T - T_{i}\| \leq s / 2$, for $T = \mathrm{Tan}^{k} (\| \langle V, t \rangle \|, x)$, then if $a \not\in E_{i, j}$ for all $j = 1, \, 2, \ldots$, there exists a sequence of points 
    \begin{equation*}
        p_{j} = (t_{j}, x_{j}) \in E \cap \mathbf{B}_{d} (a, 1 / j) \sim \Clos \, X (a, 1 / j, \mathbf{R} \times T, s / 2) \subset \mathrm{spt} \, \| V \|.
    \end{equation*} 
    Denoting $\lambda_{j} = \| a - p_{j} \| > 0$, we may assume $v_{j} = (\lambda_{j}^{-2} (t_{j} - t), \lambda_{j}^{-1} (x_{j} - x)) \rightarrow v$, with $v \notin \mathbf{R} \times T$, contradicting \ref{lem:convergence of blow up spt}.
    By \ref{Paraoblic Lipshitz graph condition}, 
    we infer that $\spt \|V\|$ is a (vertical) parabolic $(k + 2)$-rectifiable set.
\end{proof}

\begin{theorem}\label{thm: existence of unique static tangent flow}
    Suppose $V$ is Brakke flow over $J \times \G_k(U)$. Then 
    for $\|V\|$ almost every $(t,x)\in J\times U$,
    $\VarTan(V,t,x)$ consists of the single element
        \begin{equation}
            \label{eq:tangent flow eq}
                \textstyle \Density^{k}(\| \langle V, t \rangle \|, x)\cdot v(\mathbf R;T),
        \end{equation} 
        where $T = \Tan^k(\langle V,t\rangle,x)$, and moreover, 
        \begin{eqnarray*}
            \Theta (V, t, x) = \mathbf{\Theta}^{k} (\| \langle V, t \rangle \|, x) &=& (2 \boldsymbol{\alpha} (k))^{-1} \Theta^{k + 2}_{\rho} (\| V \|, t, x), \\ 
            &=& \boldsymbol{\beta} (k)^{-1} \Theta^{k + 2}_{d} (\| V \|, t, x).
        \end{eqnarray*}
\end{theorem}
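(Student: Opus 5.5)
We restrict to a set of full $\|V\|$ measure on which all of the following hold simultaneously. The map $\Tan^{k}(\langle V,\cdot\rangle,\cdot)$ is $(\|V\|,C)$ approximately continuous, and $\Theta^{*k+2}_{\rho}(\|\delta V\|,t,x)<\infty$; both are available as in the proof of \ref{thm:rectifiability of Brakke flow}. The density $\Density^{k}(\|\langle V,t\rangle\|,x)$ exists and $\Tan^{k}(\langle V,t\rangle,x)$ exists; this holds for $\mathscr L^1$ almost all $t$ and $\|\langle V,t\rangle\|$ almost all $x$, by integrality, and hence $\|V\|$ almost everywhere by Fubini. The inequality of \ref{prop:properties of Gaussian density:2} holds. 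Finally, the conclusions of \ref{thm: rectifiability thm from Mattila} and \ref{thm: mattila existence of density a.e. for parabolic rectifiable sets} hold for $E=\spt\|V\|$. These last two apply locally, because \ref{thm:rectifiability of Brakke flow} gives rectifiability and \ref{prop: density bounds} gives local finiteness of $\mathscr H^{k+2}_d\weight E$. Via \ref{lem: equality of tangent measure for a.c. measures} and \ref{proppara: a.c. of weight measure and hausdorff measure for Brakke flows}, at such points $\mathrm{Tan}(\|V\|,a)=\{\lambda\mathscr H^{k+2}_d\weight\mathbf R\times T'\colon 0<\lambda<\infty\}$ for a unique $T'$. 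We also require that $\Theta^{k+2}_{d}(\|V\|,a)$ exists and equals the Radon–Nikodym derivative of $\|V\|$ with respect to $\mathscr H^{k+2}_d\weight E$ multiplied by $\boldsymbol\omega(k)$. This follows from \ref{propara: differentiation of Radon measures} combined with \ref{thm: mattila existence of density a.e. for parabolic rectifiable sets}, the $d$-ball version coming from the bi-Lipschitz comparison (or from running the same argument with $\rho$ balls). The same holds for $\rho$.

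Fix such a point $a=(t,x)$, set $T=\Tan^k(\langle V,t\rangle,x)$, and let $W\in\VarTan(V,t,x)$ be the limit along some $r_i\downarrow0$. By \ref{flat tangent flow lemma}, $\langle W^{\pm},s\rangle=\theta^{\pm}(s)\mathbf v(T)$ with $\theta^{\pm}$ non-increasing, $\theta^\pm(s)=\Theta(V,t,x)$ for $s<0$, and $\theta^+(0)\le\Density^k(\|\langle V,t\rangle\|,x)\le\theta^-(0)$. On the other hand, $\|W\|$ is a nonzero element of $\mathrm{Tan}(\|V\|,a)$; it is nonzero because $\theta^\pm=\Theta\ge1$ on negative times by \ref{prop:properties of Gaussian density:3}. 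Hence $\|W\|=\lambda\mathscr H^{k+2}_d\weight\mathbf R\times T'$. Comparing supports on negative times forces $T'=T$, and the translation invariance in time of $\|W\|$ forces $\theta^\pm$ to be constant. Therefore $\theta^{\pm}\equiv\Theta(V,t,x)$ on $\mathbf R$. The bracket inequality at $0$ then gives $\Theta(V,t,x)=\Density^k(\|\langle V,t\rangle\|,x)$, and $W=\Theta(V,t,x)\,v(\mathbf R;T)$ by \ref{def: notation for static planar Brakke flow}. Since this holds for every subsequential limit, and tangent flows exist along subsequences of every sequence by \ref{thm: existence of tangent flows}, $\VarTan(V,t,x)$ is the singleton \eqref{eq:tangent flow eq}.

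For the parabolic densities, the rescalings $\|V^{t,x;r}\|=r^{-k-2}(T_{a,r})_\sharp\|V\|$ converge, along the full family $r\downarrow0$, to $\Theta\,\|v(\mathbf R;T)\|$. The limit puts no mass on the boundary of $\mathbf B_d(0,1)$ or $\mathbf B_\rho(0,1)$, since these have $\mathscr L^{k+1}$-null intersection with $\mathbf R\times T$. Consequently
\[
r^{-k-2}\|V\|(\mathbf B_d(a,r))\to\Theta\,\mathscr L^{k+1}(\mathbf B_d(0,1)\cap\mathbf R\times T)=\Theta\,\boldsymbol\beta(k),
\]
and similarly for $\rho$ with the constant $2\boldsymbol\alpha(k)$ (the $\rho$-ball meets $\mathbf R\times T$ in $[-1,1]\times\mathbf B^k$). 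This yields the asserted equalities, using the normalisation of $\|v(J;T)\|$ in \ref{def: notation for static planar Brakke flow}. Finally, $\|V\|=\mathscr H^{k+2}_\rho\weight\boldsymbol\nu(k)^{-1}\Theta^{k+2}_\rho(\|V\|,\cdot)$ follows from \ref{propara: differentiation of Radon measures} applied to $\|V\|\ll\mathscr H^{k+2}_\rho\weight E$. The derivative there is $\lim\|V\|(\mathbf B_\rho)/\mathscr H^{k+2}_\rho\weight E(\mathbf B_\rho)=\Theta^{k+2}_\rho(\|V\|,\cdot)/\boldsymbol\nu(k)$ almost everywhere by \ref{thm: mattila existence of density a.e. for parabolic rectifiable sets}. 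The $d$ version is analogous, using $\boldsymbol\omega(k)$ and the $d$-ball Besicovitch/density theory, or a direct density comparison via \cite[2.10.19]{MR41:1976}.

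The main obstacle is identifying $\|W\|$ with a tangent measure in the sense of \ref{lem: equality of tangent measure for a.c. measures}, with the correct normalisation and the same $T$. The point is that Mattila's rigidity forces the full space-time measure to be time-translation invariant, and this is exactly what rules out the drop of $\theta^{\pm}$ at nonnegative times. Care is needed that the rectifiability plane $T'$ coincides with $T$; this comes from the negative-time slices being multiples of $\mathbf v(T)$.
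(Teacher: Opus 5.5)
Your proposal is correct and is essentially the paper's proof. You restrict to points where $\Tan^{k}$ is approximately continuous, $\Theta^{*k+2}_{\rho}(\|\delta V\|,\cdot)<\infty$, and the tangent measures of $\|V\|$ are multiples of a static plane (via \ref{thm:rectifiability of Brakke flow}, \ref{thm: rectifiability thm from Mattila} and \ref{lem: equality of tangent measure for a.c. measures}); you then combine this with \ref{flat tangent flow lemma}, so that time-translation invariance prevents the multiplicity from dropping at nonnegative times. Your identification of the plane through the negative-time slices, and your null-boundary computation of the parabolic densities, only make explicit steps the paper leaves implicit.

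The requirement that $\Theta^{k+2}_{d}(\|V\|,\cdot)$ be a Radon--Nikodym derivative, and your final paragraph, belong to the subsequent corollary and are not needed here. Also, the bi-Lipschitz comparison of $d$ and $\rho$ alone would only give comparability, not that exact limit.
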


\begin{proof}
    For $\|V\|$ almost every $(t, x) \in J \times U$, the following three statements hold,
    \begin{enumerate}
        \item $\Theta^{* k+2} (\| \delta V \|, t, x) < + \infty$, see \ref{rmk:almost every where stationary tangent flow}.
        \item $\Tan^k(\langle V,t\rangle,x)$ is $(\| V \|, C)$ approximately continuous at $(t, x)$, coming from \cite[Theorem C]{LW-Space-Time-GrassmannMeasureBrakkeFlow}, and \ref{proppara: approximate continuity}.
        \item Tangent measures of $\|V\|$ at $(t, x)$ have the form
    \[ \{ c \cdot \| v (\mathbf{R}; \mathrm{Tan}^{k} (\| \langle V, t \rangle\|, x))\| : 0<c< \infty\},\]
    coming from \ref{thm:rectifiability of Brakke flow}, \ref{thm: rectifiability thm from Mattila}, \ref{proppara: a.c. of weight measure and hausdorff measure for Brakke flows}, \ref{lem: equality of tangent measure for a.c. measures}, and \ref{def: notation for static planar Brakke flow}.
    \end{enumerate}
    Then by \ref{flat tangent flow lemma}, if $W \in \mathrm{VarTan} (V, t, x)$, we must have that, 
    \begin{equation*}
        W = c \cdot v (\mathbf{R}; \mathrm{Tan}^{k} (\| V \|, x)),
    \end{equation*}
    with $c$ given by, 
    \begin{equation*}
        c = \Theta (V, t, x) = \mathbf{\Theta}^{k} (\| \langle V, t \rangle \|, x).
    \end{equation*}
    Moreover, we must also have that
    \begin{equation*}
        c = (2 \boldsymbol{\alpha} (k))^{-1} \Theta_{\rho}^{k + 2} ( \| V \|, t, x) = \boldsymbol{\beta} (k)^{-1} \Theta_{d}^{k + 2} (\| V \|, t, x). \qedhere
    \end{equation*}
\end{proof}

\begin{corollary}
    Suppose $V$ is a space-time-Grassmann Brakke flow over $J \times \mathbf{G}_{k} (U)$, then, 
    \begin{equation*}
        \| V \| = \mathscr{H}^{k + 2}_{\rho} \weight \boldsymbol{\nu} (k)^{-1} \Theta_{\rho}^{k + 2} (\| V \|, \, \cdot \, ) = \mathscr{H}^{k + 2}_{d} \weight \boldsymbol{\omega} (k)^{-1} \Theta_{d}^{k + 2} (\| V \|, \, \cdot \, ).
    \end{equation*}
\end{corollary}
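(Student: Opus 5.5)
The plan is to combine the absolute continuity of $\| V \|$ with respect to $\mathscr{H}^{k+2}_{\rho} \weight \spt \| V \|$ with the parabolic differentiation theory. By \ref{thm:rectifiability of Brakke flow}, $E = \spt \| V \|$ is a (vertical) parabolic $(k+2)$-rectifiable set. It is closed, hence $\mathscr{H}^{k+2}_{\rho}$ measurable, and by \ref{proppara: a.c. of weight measure and hausdorff measure for Brakke flows} $\mathscr{H}^{k+2}_{\rho} \weight E$ is a Radon measure. Set $\mu = \mathscr{H}^{k+2}_{\rho} \weight E$. Since $\| V \|$ is absolutely continuous with respect to $\mu$, \ref{propara: differentiation of Radon measures} gives
\begin{equation*}
    \| V \|(B) = \int_{B} \mathbf{D}(\| V \|, \mu, C, a) \, d\mu_{a}, \qquad B \subset J \times U \ \text{Borel}.
\end{equation*}
Here $\mathbf{D}(\| V \|, \mu, C, a) = \lim_{r \downarrow 0} \| V \|(\mathbf{B}_{\rho}(a,r)) / \mu(\mathbf{B}_{\rho}(a,r))$, and this limit exists for $\mu$ almost all $a$.

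Next I identify the derivative with a density ratio. For $\mu$ almost all $a \in E$, \ref{thm: mattila existence of density a.e. for parabolic rectifiable sets} gives $\lim_{r \downarrow 0} \mu(\mathbf{B}_{\rho}(a,r)) / r^{k+2} = \boldsymbol{\nu}(k)$. At such points I write
\begin{equation*}
    \frac{\| V \|(\mathbf{B}_{\rho}(a,r))}{r^{k+2}} = \frac{\| V \|(\mathbf{B}_{\rho}(a,r))}{\mu(\mathbf{B}_{\rho}(a,r))} \cdot \frac{\mu(\mathbf{B}_{\rho}(a,r))}{r^{k+2}}.
\end{equation*}
Both factors converge, so $\Theta^{k+2}_{\rho}(\| V \|, a)$ exists and equals $\boldsymbol{\nu}(k) \, \mathbf{D}(\| V \|, \mu, C, a)$ for $\mu$ almost all $a$. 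This yields $\| V \| = \mathscr{H}^{k+2}_{\rho} \weight \boldsymbol{\nu}(k)^{-1} \Theta^{k+2}_{\rho}(\| V \|, \cdot)$.

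The point needing care is what happens off $E$ and on exceptional sets. Off $E$, $\Theta^{k+2}_{\rho}(\| V \|, \cdot) = 0$ because $E$ is closed, so the restriction to $E$ is harmless. On $E$, the identity may be read with the density only defined $\mu$ almost everywhere, since a $\mu$-null set does not affect the measure $\mathscr{H}^{k+2}_{\rho} \weight f$. Where the density fails to exist, I either use the convention that it is defined almost everywhere, or replace it by the upper density, which agrees $\mu$ almost everywhere.

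The $d$ version is the same argument with $\mathscr{H}^{k+2}_{d}$. The parabolic Besicovitch property of \ref{proppara: parabolic Besicovitch} is stated for $\rho$, so I would avoid needing it for $d$ as follows. The measures $\mathscr{H}^{k+2}_{d} \weight E$ and $\mathscr{H}^{k+2}_{\rho} \weight E$ are mutually absolutely continuous, hence $\| V \| = \mathscr{H}^{k+2}_{d} \weight g$ for some Borel $g$, and it remains to identify $g$. Combining the first representation with \ref{thm: existence of unique static tangent flow} gives $\boldsymbol{\beta}(k)^{-1} \Theta^{k+2}_{d} = (2\boldsymbol{\alpha}(k))^{-1} \Theta^{k+2}_{\rho}$ $\| V \|$ almost everywhere. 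Separately, the $\rho$ version of \ref{thm: mattila existence of density a.e. for parabolic rectifiable sets} together with the normalisations in \ref{para: parabolic metrics} gives $\mathscr{H}^{k+2}_{d} \weight E = \boldsymbol{\beta}(k) \boldsymbol{\nu}(k) (2\boldsymbol{\alpha}(k)\boldsymbol{\omega}(k))^{-1} \mathscr{H}^{k+2}_{\rho} \weight E$. This identity holds on planes directly from the definitions, and on rectifiable sets via blow-ups, using the tangent-measure description of \ref{thm: rectifiability thm from Mattila}. Substituting both relations should give exactly $g = \boldsymbol{\omega}(k)^{-1} \Theta^{k+2}_{d}(\| V \|, \cdot)$. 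This constant bookkeeping, and justifying the comparison of $\mathscr{H}^{k+2}_{d}$ and $\mathscr{H}^{k+2}_{\rho}$ on $E$, is the main obstacle. The fallback is to argue directly that $\{ \mathbf{B}_{d}(a,r) \}$ is also a Vitali relation for Radon measures having the flat tangent structure almost everywhere.
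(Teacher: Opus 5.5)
Your proof of the $\rho$ identity is exactly the paper's: differentiate $\|V\|$ against $\mathscr{H}^{k+2}_\rho \weight \spt\|V\|$ along the $\rho$-ball relation $C$, and insert the density $\boldsymbol{\nu}(k)$ from \ref{thm: mattila existence of density a.e. for parabolic rectifiable sets}.

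For the $d$ identity you take a detour the paper does not need. The relation $C$ consists of $\rho$-balls, but by \ref{proppara: parabolic Besicovitch} it is a Vitali relation for every Radon measure on $J \times U$. In particular it works for $\mathscr{H}^{k+2}_d \weight E$, so no Vitali property of $d$-balls is required. The paper therefore differentiates $\|V\|$ directly against $\mathscr{H}^{k+2}_d \weight E$ along $\rho$-balls. It uses that $r^{-(k+2)}\mathscr{H}^{k+2}_d(E \cap \mathbf{B}_\rho(a,r)) \to 2\boldsymbol{\alpha}(k)\boldsymbol{\omega}(k)\boldsymbol{\beta}(k)^{-1}$ for almost every $a \in E$. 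It then converts $\Theta^{k+2}_\rho$ into $\Theta^{k+2}_d$ with \ref{thm: existence of unique static tangent flow}.

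Your comparison $\mathscr{H}^{k+2}_d \weight E = \lambda\, \mathscr{H}^{k+2}_\rho \weight E$, proved by blow-ups and differentiation of one measure against the other along $\rho$-balls, rests on precisely this same limit. So your route is the paper's argument plus one extra substitution, and your fallback about $d$-balls is unnecessary. Note also that \ref{thm: rectifiability thm from Mattila} alone does not fix the multiple in the tangent measures. That multiple is pinned down by $\Theta^{k+2}_d(\mathscr{H}^{k+2}_d \weight E, \cdot) = \boldsymbol{\omega}(k)$ from \ref{thm: mattila existence of density a.e. for parabolic rectifiable sets}.

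The concrete error is the constant you wrote, which is inverted. On $\mathbf{R} \times T$ the paper's normalisations give $\mathscr{H}^{k+2}_d = \boldsymbol{\beta}(k)^{-1}\boldsymbol{\omega}(k)\mathscr{L}^{k+1}$ and $\mathscr{H}^{k+2}_\rho = (2\boldsymbol{\alpha}(k))^{-1}\boldsymbol{\nu}(k)\mathscr{L}^{k+1}$. Hence the correct factor is $\lambda = 2\boldsymbol{\alpha}(k)\boldsymbol{\omega}(k)(\boldsymbol{\beta}(k)\boldsymbol{\nu}(k))^{-1}$, the reciprocal of yours. Substituting your version gives $\|V\| = \mathscr{H}^{k+2}_d \weight \lambda^{2}\boldsymbol{\omega}(k)^{-1}\Theta^{k+2}_d(\|V\|,\cdot)$. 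So your claim that the substitution ``gives exactly'' the statement is false as written. With the correct $\lambda$ the factor is $\lambda^{-1}\boldsymbol{\nu}(k)^{-1}\, 2\boldsymbol{\alpha}(k)\boldsymbol{\beta}(k)^{-1} = \boldsymbol{\omega}(k)^{-1}$, and the argument closes.
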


\begin{proof}
    By \ref{thm:rectifiability of Brakke flow}, \ref{thm: mattila existence of density a.e. for parabolic rectifiable sets}, and \ref{proppara: a.c. of weight measure and hausdorff measure for Brakke flows}, we have that for $\| V \|$, $\mathscr{H}^{k + 2}_{\rho}$, and $\mathscr{H}^{k + 2}_{d}$, almost all $(t, x) \in \mathrm{spt} \, \| V \|$, 
    \begin{equation*}
        \lim_{r \downarrow 0} r^{- (k+2)} \mathscr{H}_{\rho}^{k + 2} (\mathrm{spt} \, \| V \| \cap \mathbf{B}_{\rho} (t, x, r)) = \boldsymbol{\nu} (k), 
    \end{equation*}
    and, 
    \begin{equation*}
        \lim_{r \downarrow 0} r^{- (k+2)} \mathscr{H}_{d}^{k + 2} (\mathrm{spt} \, \| V \| \cap \mathbf{B}_{\rho} (t, x, r)) = \boldsymbol{\beta} (k)^{-1} 2 \boldsymbol{\alpha} (k) \boldsymbol{\omega} (k).
    \end{equation*}
    The conclusion then follows from \ref{propara: differentiation of Radon measures}, \ref{proppara: a.c. of weight measure and hausdorff measure for Brakke flows}, and \ref{thm: existence of unique static tangent flow}.
\end{proof}

\bibliographystyle{plain}
\bibliography{Parabolic-Rectifiability-BrakkeFlow}

\end{document}